\gdef\n@te#1#2{\leavevmode\vadjust{%
 {\setbox\z@\hbox to\z@{\strut#1}%
  \setbox\z@\hbox{\raise\dp\strutbox\box\z@}\ht\z@=\z@\dp\z@=\z@%
  #2\box\z@}}}
\gdef\leftnote#1{\n@te{\hss#1\quad}{}}
\gdef\rightnote#1{\n@te{\quad\kern-\leftskip#1\hss}{\moveright\hsize}}
\gdef\?{\FN@\qumark}
\gdef\qumark{\ifx\next"\DN@"##1"{\leftnote{\rm##1}}\else
 \DN@{\leftnote{\rm??}}\fi{\rm??}\next@}}
\DeclareFontFamily{OT1}{wncyr}{\hyphenchar\font45 }
\DeclareFontShape{OT1}{wncyr}{m}{n}{%
   <5> <6> <7> <8> <9> gen * wncyr
   <10> <10.95> <12> <14.4> <17.28> <20.74>  <24.88>wncyr10}{}
\DeclareFontShape{OT1}{wncyr}{m}{it}{%
   <5> <6> <7> <8> <9> gen * wncyi
   <10> <10.95> <12> <14.4> <17.28> <20.74> <24.88> wncyi10}{}
\DeclareFontShape{OT1}{wncyr}{m}{sc}{%
   <5> <6> <7> <8> <9> <10> <10.95> <12> <14.4>
   <17.28> <20.74> <24.88>wncysc10}{}
\DeclareFontShape{OT1}{wncyr}{b}{n}{%
   <5> <6> <7> <8> <9> gen * wncyb
   <10> <10.95> <12> <14.4> <17.28> <20.74> <24.88>wncyb10}{}
\def\rus{\usefont{OT1}{wncyr}{m}{n}\cyracc\fontsize{9}{11pt}\selectfont}
\def\rusit{\usefont{OT1}{wncyr}{m}{it}\cyracc\fontsize{9}{11pt}\selectfont}
\def\russc{\usefont{OT1}{wncyr}{m}{sc}\cyracc\fontsize{9}{11pt}\selectfont}
\theoremstyle{plain}
\newtheorem{theorem}{Theorem}
\newtheorem{lemma}{Lemma}
\newtheorem{remark}{\it Remark}
\newtheorem{corollary}{Corollary}
\theoremstyle{definition}
\newtheorem{nothing*}[theorem]{}
\newtheorem{subnothing*}[sub]{}
\newtheorem{example}{Example}
\newtheorem{examples}[theorem]{Examples}
\newtheorem{question}[theorem]{Question}
\theoremstyle{remark}
\def\An{{\bf A}^n}
\def\bA2{{\mathbf A}\!^2}
\def\Aut {{\rm Aut\,}}
\def\Autn{{\rm Aut}\,\mathbf A\!^{n}}
\def\Autsn{{\rm Aut}^*\mathbf A\!^{n}}
\def\An{{\mathbf A}\!^n}
\def\GL{{\rm GL}}
\def\SL{{\rm SL}}
\def\Cr{{\rm Cr}}
\def\Aff{{\rm Aff}}
\begin{document}



\title[Tori in the Cremona groups]{Tori in the Cremona groups}

\author[Vladimir  L. Popov]{Vladimir  L. Popov${}^*$}
\address{Steklov Mathematical Institute,
Russian Academy of Sciences, Gubkina 8, Moscow\\
119991, Russia} \email{popovvl@mi.ras.ru}

\thanks{
 ${}^*$\,Supported by
 grants {\rus RFFI
11-01-00185-a}, {\rus N{SH}--5139.2012.1}, and the
program {\it Contemporary Problems of Theoretical
Mathematics} of the Russian Academy of Sciences, Branch
of Mathematics. }




\maketitle


\begin{abstract} We classify up to conjugacy the subgroups of certain types in the full, in the affine, and in the special affine Cremona groups.
We prove that the normalizers of these subgroups are algebraic.\;As an application, we obtain new results in the Linearization Problem genera\-liz\-ing to disconnected groups Bia{\l}ynicki-Birula's results  of 1966--67.
 We prove ``fusion theorems'' for  $n$-dimensional tori in the affine and in the special affine Cremona groups of rank  $n$. In the final section we introduce and discuss  the notions of Jordan decomposition and torsion primes for the Cremona groups.
\end{abstract}

\tableofcontents


\section{Introduction}

This work arose from the attempt to solve the problem posed in
\cite{Popov1}, \cite{Popov2}. In these papers have been introduced the notions of root
$\alpha$ and root vector  $D$ of an affine algebraic variety $X$ with respect to an algebraic torus  $T\subseteq {\rm Aut}\,X$. Namely, $D$ is a locally-nilpotent derivation of the coordinate algebra of the variety
 $X$, and $\alpha$ is a character of the torus $T$ such that
  $t^*\circ D\circ {t^*}^{-1}=\alpha(t)D$ for all $t\in T$.
  These definitions are inspired by the natural analogy with the classical definitions of the theory of algebraic groups and purport an attempt to apply to, in general, infinite dimensional group ${\rm Aut}\,X$ the techniques important in the theory of usual algebraic groups\footnote{In \cite{Popov1}, \cite{Popov2} was considered the case where  $X=\An$ and $T$
  is the maximal diagonal torus that leaves fixed the standard volume form, but
  this restriction plays no role in the definition.}.

 In \cite{Popov1}, \cite{Popov2} the following two problems related to the classical case where $X=\An$ and $T=D_{n}^*$ is the maximal diagonal torus  that leaves fixed the standard volume form  (see below \eqref{Dnnn}) have been posed:

\begin{enumerate}[\hskip 9.2mm]
\item[(R)] Find all the roots and root vectors of the variety
$\An$ with respect to $D_{n}^*$.
\item[(W)] Describe the normalizer and centralizer of the torus $D_n^*$
in the group ${\rm Aut}^*\An$ of automorphisms of the space $\An$ that leave fixed the stan\-dard volume form.
\end{enumerate}

 Problem (R) has been solved by  Liendo in \cite{Liendo}.\;The answer is the following. Let $x_1,\ldots, x_n$ be the standard coordinate functions on $\An$ and let  $\varepsilon_1,\ldots,\varepsilon_n$ be the
 ``coordinate'' characters of the standard $n$-dimensional diagonal torus  $D_n$ in $\Autn$ (see below \eqref{torusDn} and \eqref{epsilonn}). Then, up to multiplication by a nonzero constant, the root vectors are precisely all the derivations
 $D$ of the form
\begin{equation}\label{differ}
x_1^{l_1}\cdots x_n^{l_n}
({\partial}/{\partial x_i}),
\end{equation}
  where $l_1,\ldots, l_n$
  are nonnegative integers and  $l_i=0$. The root  $\alpha$ corresponding to root vector
 \eqref{differ}
 is the restriction to  $D_n^*$ of the character
\begin{equation*}
\varepsilon_i^{-1}\textstyle\prod_{j=1}^n\varepsilon_j^{l_j}.
\end{equation*}

 The problem mentioned in the beginning of this introduction is Problem (W). It is clear that it is aimed at getting a description of the ``Weyl group'' of the root system from Problem (R). We solve it in the present paper. Namely, we prove (Theorem \ref{12}) that the normalizer (centralizer) of the torus  $D_n^*$ in ${\rm Aut}^*\An$ coincides with its normalizer (centralizer) in ${\rm SL}_n$, so that the Weyl group of  $D_n^*$ in ${\rm Aut}^*\An$
 is the same as that of
  $D_n^*$ in ${\rm SL}_n$\,---\,it is the group of all permutations of the characters $\varepsilon_1,\ldots, \varepsilon_n$.

In fact, this result is only one special case of the series of general results that we obtain here. Namely, $D_n^*$
is only one of the infinitely many nonconju\-ga\-te diagonalizable algebraic subgroups
 $G$ of dimension $\geqslant n-1$ in the group $\Autn$.
 We prove that the normalizer of $G$ in $\Autn$ always is an  {\it algebraic} subgroup in  $\Autn$ (Theorem \ref{algnorm}).  It is the characteristic property of the specified dimensions: in general, for the diagonalizable subgroups of dimension
 $\leqslant n-2$ it does not hold. Moreover, in the case when nonconstant $G$-invariant polynomial functions on $\An$ exist, we explicitly describe the normalizer of  $G$ in $\Autn$, in particular, we show that in all but one  cases
 it coincides with the normalizer of  $G$ in
a group conjugate  to  $\GL_n$ (Theorem\,\ref{normal}).

Using the  found information, we obtain the new results in the Linearization Problem. In
 1966--67  Bia{\l}ynicki-Birula proved  \cite{BB1}, \cite{BB2} that every algebraic action on  $\An$  of an algebraic torus of dimension  $\geqslant n-1$ is equivalent to a linear action. We extend this statement to disconnected groups proving that every algebraic action
 on $\An$ of either an $n$-dimensional algebraic group whose connected component of identity is a torus, or an  $(n-1)$-dimensional diagonalizable group is equivalent to a linear action  (Theorems \ref{nnnnnn} and \ref{n-1}).

We also obtain the following classifications:

\begin{enumerate}[\hskip 2.2mm \rm(i)]
\item the classification of diagonalizable subgroups of the group  ${\rm Aff}_n$ of affine transformastions (see below \eqref{affi}) up to conjugacy in the full Cremona group  $\Cr_n={\rm Bir}\,\An$ (Theorem \ref{dc});
    \item the classification  of $n$-dimensional diagonalizable subgroups of  $\Autn$ up to conjugacy in $\Autn$ (Theorem \ref{nn});
        \item the classification of $(n-1)$-dimensional diagonalizable subgroups of  $\Autsn$
up to conjugacy in  $\Autsn$ (Theorem \ref{n-1*});
 \item the classiciation, up to conjugacy in  $\Autn$, of maximal   $n$-dimensio\-nal algebraic subgroups $G$ in $\Autn$ such that
$G^0$ is a torus (Theorem\,\ref{nn});
\item the classiciation, up to conjugacy in  $\Autsn$, of maximal   $(n-1)$-dimensional algebraic subgroups $G$ in $\Autsn$ such that
$G^0$ is a torus (Theorem \ref{12});
\item the classification of $(n-1)$-dimensional diagonalizable subgroups of  $\Autn$
up to conjugacy in с  $\Autn$ (Theorem \ref{n-1});
    \item  the classification of diagonalizable subgroups in
         $\Autn$ of dimension $\geqslant n-1$ up to conjugacy in $\Cr_n$ (Theorems \ref{nn} and \ref{Crn-11});
        \item the classification of one-dimensional tori of ${\rm Aut}\,\mathbf A^3$ up to conjugacy in ${\rm Aut}\,\mathbf A^3$ (Theorem \ref{1-3}).
\end{enumerate}

For instance, we prove that the set of classes of  $(n-1)$-dimensional diagonalizable subgroups of $\Autn$ that are conjugate in $\Autn$ is bijecti\-vely para\-metrized by the set of nonzero nondecreasing sequences
\begin{equation}\label{seq}
(l_1, \ldots, l_n)\in \mathbf Z^n,
\end{equation}
 such that $(l_1\ldots, l_n)\leqslant (-l_n\ldots, -l_1)$ with respect to the lexicographic order. Under this parametrization, to sequence  \eqref{seq} corresponds the
 class of the subgroup ${\rm ker}\,\varepsilon_1^{l_1}\cdots \varepsilon_n^{l_n}$.

Another example: we show that diagonalizable subgroups of
 ${\rm Aff}_n$ are conjugate in  $\Cr_n$ if and only if they are isomorphic and we
specify their canonical representatives. In particular (see Corollary
\ref{isoconj}), every two isomorphic finite Abelian subgroups of ${\rm Aff}_n$ are conjugate in  $\Cr_n$  (for finite cyclic subgroups this has been proved in \cite{Blanc06}).

In \cite{Serre2} Serre proved ``fusion theorem'' for the torus $D_n$ in $\Cr_n$.\;We prove and use ``fusion theorems'' for $n$-dimensional tori in  $\Autn$ and in
${\rm Aut}^*{\mathbf A}^{n+1}$ (Theorem\;\ref{FFF}).

In the final section, developing further the theme of analogies between the Cremona groups
and algebraic groups, we introduce and discuss the notions of Jordan decomposition and torsion primes  for the Cremona groups. In the course of discussion, we formulate some open ques\-tions.

\vskip 2mm

{\it Acknowledgment.} I am grateful to J.-P. Serre for the comments.

\newpage
{\bf Notation and conventions.}

\vskip 1mm

In the sequel, ``variety'' means ``algebraic variety over the fixed algebra\-i\-cal\-ly closed field $k$  of characteristic zero" in the sense of Serre's FAC \cite{Serre0}.   Apart from the standard notation and conventions of  \cite{Borel} and \cite{PV} used without reminders we also use the following:
\begin{enumerate}[\hskip 5mm ---]
\item ${\rm Mat}_{m\times n}(R)$ is the set of all matrices with $m$ rows, $n$  columns, and the coefficients in  $R$.

    \item $N_H(S)$ and $Z_H(S)$ are, respectively, the normalizer and centralizer of the subgroup $S$ of the group $H$.

         \item $\boldsymbol\mu_d$ is the subgroups of order $d$ in ${\mathbf G}_{\rm m}$.

        \item ${\rm X}(D)$ is the group of rational characters of the diagonalizable algebraic group $D$.

             \item $\chi(X)$ is the Euler characteristic of the variety  $X$ relative to the $l$-adic cohomology (for $k=\mathbf C$, by \cite{La} it coincides with the Euler characteristic relative to the usual cohomology with compact supports (cf.\;also \cite[Appendix]{KP})).

         \item If the group $G$ acts on a set $M$ and $\varphi\colon G\times M\to M$ is the map defining this action, then for the subsets $S\subseteq G$ and $X\subseteq M$, the subset $\varphi(S\times X)\subseteq M$ is denoted by $S\cdot X$
             (each time it is clear from the context what $\varphi$ is meant). In particular,
by $G\cdot a$ is denoted the $G$-orbit of the point $a$. By $G_a$ is denoted the $G$-stabilizer of the point\;$a$.

\item $x_1,\ldots, x_n$ are the standard coordinate functions on $\An$:
\begin{equation*}
x_i(a):=a_i,\quad a:=(a_1,\ldots, a_n)\in\An.
\end{equation*}
 \end{enumerate}

 In the sequel it is assumed that all considered algebraic groups are affine and all their homomorphisms are algebraic. Below all tori and diagonalizable groups are algebraic.

An action of a group $G$ on a vector space  $V$ is called {\it locally finite},
 if for every vector $v\in V$ the linear span of the orbit  $G\cdot v$ is finite dimensional.

The group $\Cr_n:={\rm Bir}\,\An$ is called  {\it the Cremona group of rank} $n$. The map $\varphi\mapsto(\varphi^*)^{-1}$ identifies it with  ${\rm Aut}_kk(x_1,\ldots, x_n)$. Every birational isomorphism  $X\dashrightarrow\An$ identifies $\Cr_n$ with ${\rm Bir}\,X$.\;For every
$g \in \Cr_n$ the functions
\begin{equation}\label{g*}
g_i=g^*(x_i)\in k(\An)
\end{equation}
determine $g$ by the formula
\begin{equation}\label{action}
g(a)=(g_1(a),\ldots, g_n(a))\;\;\mbox{if $g$ is defined at $a\in \An$};
\end{equation}
we use the notation
\begin{equation}\label{ggg}
(g_1,\ldots, g_n):=g.
\end{equation}

By means of the notion of ``algebraic family'' $S\to {\rm Cr}_n$ (see\,\cite{Ram}) the group ${\rm Cr}_n$ is endowed with the Zariski topology (see\,\cite{Serre2}, \cite{Blanc10}).
If a homomorphism $G\to\Cr_n$ of an algebraic group $G$ is
 an algebraic family, then its image is called an {\it algebraic subgroup} of $\Cr_n$ (see\,\cite{Popov3}).

 The subgroup
 \begin{equation*}
 {\Autn}:=\{(g_1,\ldots, g_n)\in \Cr_n\mid g_1,\ldots, g_n\in k[\An]=k[x_1,\ldots, x_n]\}
 \end{equation*}
 is called the {\it affine Cremona group of rank $n$}.

  It contains the algebraic subgroup of affine transformations
  \begin{equation}\label{affi}
 {\rm Aff}_n=\{(g_1,\ldots, g_n)\in \Autn \mid \deg\,g_1=\ldots=\deg\,g_n=1\},
 \end{equation}
and
  ${\rm Aff}_n$, in turn, contains the algebraic subgroup of linear transformations
   \begin{equation*}
 \GL_n=\{g\in {\rm Aff}_n \mid g(0)=0\}.
 \end{equation*}

If $g=(g_1,\ldots, g_n)\in\Autn$
(see\,\eqref{ggg}), then we put
\begin{equation*}
{\rm Jac}(g):=\det(\partial g_i/\partial x_j).
\end{equation*}
 Since $g\in\Autn$, we have
 ${\rm Jac}(g)\in k\setminus\{0\}$. Therefore, $g\mapsto {\rm Jac}(g)$ is the homomorphism of $\Autn$ in the multiplicative group of the field  $k$. Its kernel
 \begin{equation*}
 \Autsn:=\{f\in \Autn\mid {\rm Jac}(g)=1\}
 \end{equation*}
 consists of the automorphisms of  $\An$ that leaves fixed the standard volume form; it is called {\it the special affine Cremona group of rank} $n-1$ (regarding the ranks in these names see Theorems \ref{max1}(i) and \ref{max3}(i) below). The latter group contains the algebraic subgroup
 \begin{equation*}
 \SL_n:=\GL_n\cap \Autsn.
\end{equation*}

  The embeddings   $\Cr_{n}\hookrightarrow\Cr_{n+1}$, $(g_1,\ldots, g_n)\mapsto (g_1,\ldots, g_n, x_{n+1})$ are arranged the tower
  $\Cr_1\hookrightarrow\Cr_2\hookrightarrow\cdots\hookrightarrow\Cr_n\hookrightarrow\cdots$. Its direct limit $\Cr_\infty$ is called {\it the Cremona group of infinite rank},
see\,\cite[Sect.\,1]{Popov3}.

 In $\GL_n$ is distinguished the ``standard'' maximal torus
\begin{equation}\label{nDn}
D_n:=\{(t_1x_1,\ldots t_nx_n)\mid t_1,\ldots, t_n\in k\}\subset\GL_n.
\end{equation}
Its normilizer in $\GL_n$ is the group of all monomial transformations in $\GL_n$:
\begin{equation}\label{torusDn}
N_{\GL_n}(D_n)=\{(t_1x_{\sigma(1)},\ldots, t_nx_{\sigma(n)})\mid \sigma\in S_n,\; t_1,\ldots, t_n\in k\}\subset \GL_n,
\end{equation}
where $S_n$ is the symmetric group of degree $n$. In $\Autsn$ is contained the torus
\begin{equation}\label{Dnnn}
D_n^*:=D_n\cap \Autsn=\{(t_1x_1,\ldots, t_nx_n)\mid t_1,\ldots, t_n\in k,\; t_1\cdots t_n=1\}.
\end{equation}

The ``coordinate" characters $\varepsilon_1,\ldots,\varepsilon_n$ of the torus $D_n$ defined by
\begin{equation}\label{epsilonn}
\varepsilon_i\colon D_n\to \mathbf G_{\rm m}, \quad (t_1x_1,\ldots, t_nx_n)\mapsto t_i,
\end{equation}
constitute a base of the (free Abelian) group ${\rm X}(D_n)$.

\section{Some subgroups of $\Cr_n$}
In the sequel we consider the elements of the group $\mathbf Z^n$ as rows of length  $n$. Then the rows of any matrix $A=(a_{ij})\in {\rm Mat}_{m\times n}(\mathbf Z)$ become the elements of this group and we use the following notation:
\begin{gather}
{\mathcal R}_A
:=\mbox{the subgroup of ${\mathbf Z}^n$ generated by the rows of matrix $A$},\label{matrix}\\[-2pt]
D_n(A)=\bigcap_{i=1}^{m} {\rm ker}\,\lambda_i,\quad\mbox{where}\quad \lambda_i:=\varepsilon_1^{a_{i,1}}\cdots\varepsilon_n^{a_{i,n}}.\label{chi}
\end{gather}
If $m=1$, then in place of $D_n((l_1\ldots l_n))$ we write $D_n(l_1,\ldots, l_n)$. In particular,
\begin{equation}\label{all0}
D_n(0,\ldots,0)=D_n.
\end{equation}

Clearly,
$D_n(A)$ is a closed subgroup of  $D_n$ and (see\,\eqref{matrix})
\begin{equation}\label{iters}
D_n(A)=\bigcap_{(l_1,\ldots, l_n)\in{\mathcal R}(A)} {\rm ker}\,\varepsilon_1^{l_1}\cdots\varepsilon_n^{l_n}.
\end{equation}

Recall the terminology used below (see, e.g.,\;\cite{Vin} and \cite{MM}).

Every finite Abelian group $G$ decomposes as a direct sum of cyclic sub\-groups
of orders  $d_1,\ldots, d_m$, where
$d_i$ divides $d_{i+1}$ for $i=1,\ldots, m-1$, and $d_1>1$ if $|G|>1$. The numbers $d_1,\ldots, d_s$ are uniquely determined by   $G$ and are called the {\it invariant factors} of $G$.

Every nonzero integer matrix  $A$ can be transformed by means of elemen\-ta\-ry transformations of its rows and columns into a matrix $S=(s_{ij})$ such that only  the coefficients $s_{ii}$ for $i=1,\ldots, r$ are nonzero and  $s_{ii}$ divides $s_{i+1,i+1}$ for $i=1,\ldots, r-1$. The integers $s_{11},
\ldots, s_{rr}$  are uniquely determined by $A$   ($s_{ii}=f_i/f_{i-1}$, where $f_i$ is gcd of the minors of order $i$ of the matrix $A$ and $f_0:=1$) and are called  the {\it invariant factors of matrix} $A$. The matrix $S$ is called  the {\it Smith normal form of matrix} $A$.

\begin{lemma}\label{inC}  If $B$ is obtained from
$A\in {\rm Mat}_{m\times n}(\mathbf Z)$ by means of the elemen\-ta\-ry transformations of rows and columns, then the subgroups
$D_n(A)$ and $D_n(B)$ are conjugate in  $\Cr_n$.
\end{lemma}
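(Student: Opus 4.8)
The plan is to reduce the assertion to the case where $B$ is obtained from $A$ by column operations alone, and then to produce the conjugating element of $\Cr_n$ as a monomial transformation. First I would record the only structural input that is needed: by \eqref{iters}, the group $D_n(A)$ depends solely on the subgroup ${\mathcal R}_A\subseteq\mathbf Z^n$ spanned by the rows of $A$. Now a finite sequence of elementary transformations of the rows and columns of $A$ turns $A$ into $B=PAU$ with $P\in\GL_m(\mathbf Z)$ and $U\in\GL_n(\mathbf Z)$, since row operations amount to left multiplication by matrices invertible over $\mathbf Z$ and column operations to right multiplication, and left and right multiplications commute. Because $P$ is invertible over $\mathbf Z$, the rows of $PAU$ span the same subgroup of $\mathbf Z^n$ as the rows of $AU$, namely ${\mathcal R}_A\cdot U=\{vU\mid v\in{\mathcal R}_A\}$; hence, again by \eqref{iters}, $D_n(B)=D_n(AU)$. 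So it suffices to show that $D_n(A)$ and $D_n(AU)$ are conjugate in $\Cr_n$ for every $U\in\GL_n(\mathbf Z)$.

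For the second step I would bring in monomial transformations. Given $U=(u_{ij})\in\GL_n(\mathbf Z)$, let $\vartheta_U\in\Cr_n$ be the birational map whose associated field automorphism sends $x_i$ to $x_1^{u_{i1}}\cdots x_n^{u_{in}}$; since $U$ is invertible over $\mathbf Z$ this is indeed an element of $\Cr_n$, with $\vartheta_U^{-1}=\vartheta_{U^{-1}}$, and $U\mapsto\vartheta_U$ is a homomorphism $\GL_n(\mathbf Z)\to N_{\Cr_n}(D_n)$. A short computation with pullbacks (writing $\varepsilon_j(d)=t_j$ for $d\in D_n$) shows that conjugation by $\vartheta_U$ carries $d$ to the element of $D_n$ on which $\varepsilon_i$ takes the value $\prod_j t_j^{u_{ij}}$; equivalently, if $c_U$ denotes this automorphism of $D_n$, then $\varepsilon_1^{l_1}\cdots\varepsilon_n^{l_n}\circ c_U=\varepsilon_1^{l_1'}\cdots\varepsilon_n^{l_n'}$ with $(l_1',\ldots,l_n')=(l_1,\ldots,l_n)\,U$. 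Feeding this into the description $D_n(A)=\bigcap_{l\in{\mathcal R}_A}\ker(\varepsilon_1^{l_1}\cdots\varepsilon_n^{l_n})$ from \eqref{iters}, and using $c_U^{-1}=c_{U^{-1}}$, gives $\vartheta_U\,D_n(A)\,\vartheta_U^{-1}=D_n(AU^{-1})$. Replacing $U$ by $U^{-1}$ turns this into $D_n(AU)=\vartheta_{U^{-1}}\,D_n(A)\,\vartheta_{U^{-1}}^{-1}$, which, combined with the reduction of the first step, proves the lemma.

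I do not expect a genuine obstacle here; the work is essentially bookkeeping. The one place where care is required is pinning down the conventions in the second step --- whether conjugation by $\vartheta_U$ induces $U$, $U^{-1}$, or its transpose on ${\rm X}(D_n)$, and accordingly by which matrix and on which side ${\mathcal R}_A$ gets multiplied. Once \eqref{iters} is invoked, so that $D_n(A)$ is governed by the lattice ${\mathcal R}_A$ alone, everything reduces to the fact that the monomial transformations realize $\GL_n(\mathbf Z)$ inside $N_{\Cr_n}(D_n)$ acting on the character lattice in the standard way, and that $\GL_n(\mathbf Z)$ is a group, so the needed conjugator exists for every $U$.
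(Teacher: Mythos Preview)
Your proof is correct and follows essentially the same route as the paper: both arguments observe that row operations leave $D_n(A)$ unchanged (since it depends only on ${\mathcal R}_A$), and that column operations are realized by conjugation in $\Cr_n$ via the $\GL_n(\mathbf Z)$-action on $D_n$ by algebraic group automorphisms. The only cosmetic difference is that the paper phrases the column step abstractly in terms of changes of basis of ${\rm X}(D_n)$ and the identification $\Cr_n\simeq{\rm Bir}\,D_n$, whereas you write down the monomial map $\vartheta_U$ explicitly; these are the same construction.
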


\begin{proof} Let $\tau_1,\ldots,\tau_n$ be some base of the group  ${\rm X}(D_n)$. Then
 (see \eqref{chi}) $\lambda_i=\tau_1^{c_{i,1}}\cdots \tau_n^{c_{i,n}}$ for some  $c_{ij}\in \mathbf Z$ and
\begin{equation}\label{DAchi}
D_n(A)=\bigcap_{i=1}^m{\rm ker}\,\tau_1^{c_{i,1}}\cdots \tau_n^{c_{i,n}}.
\end{equation}

The group ${\rm Aut}_{\rm gr}D_n$ of automorphisms of the algebraic group  $D_n$ is natu\-ral\-ly identified with
  $\GL_n(\mathbf Z)$. Its natural action on the set of bases
of the group ${\rm X}(D_n)$ is transitive. Therefore, there is an automorphism
   \begin{equation}\label{phiii}
   \varphi\in
   {\rm Aut}_{\rm gr}D_n,
   \end{equation}
    such that $
   \tau_i\circ \varphi=\varepsilon_i$
    for each $i$. From \eqref{DAchi} it then follows that
    \begin{equation}\label{cphiii}
    \varphi^{-1}(D_n(A))=D_n(C),\quad\mbox{где $C=
    (c_{ij})\in{\rm Mat}_{m\times n}(\mathbf Z)$.}
     \end{equation}
     Since the map of varieties $D_n\to \An, \; (t_1x_1,\ldots, t_nx_n)\mapsto (t_1,\ldots, t_n),$ is a birational isomorphism, by means of it we can identify
the group ${\rm Cr}_n={\rm Bir}\,\An$ with the group
of birational automorphisms of
the underlying variety of torus
 $D_n$. Then $\varphi$ becomes an element of the group ${\rm Cr}_n$ and from \eqref{phiii} and \eqref{cphiii} it is easy to deduce that in this group we have the equality
     \begin{equation*}
     \varphi^{-1}D_n(A)\varphi=D_n(C).
     \end{equation*}

     Further, notice that if the base  $\tau_1,\ldots,\tau_n$ is obtained from the base $\varepsilon_1,\ldots\break\ldots, \varepsilon_n$ by an elementary transformation, then
the matrix $C$ is obtained from  $A$ by an elementary transformation of columns, and every elementary transformation of columns of $A$ is realizable in this way.
 Also, notice that if the sequence
$\varphi_1,\ldots,\varphi_m\in {\rm X}(D_n)$ is obtained by an elementary transformation from the sequence $\lambda_1,\ldots,\lambda_m$, then
$D_n(A)=\bigcap_{i=1}^m{\rm ker}\,\varphi_i$, the matrix $(c_{ij})$ defined by the equalities
$\varphi_i=\varepsilon_1^{c_{i,1}}\cdots \varepsilon_n^{c_{i,n}}$
is obtained from  $A$ by an elementary transformation of rows,
and every elementary transformation of rows of $A$ is realizable in this way.

Clearly, the said implies the claim of lemma.
\quad $\square$ \renewcommand{\qed}{}\end{proof}

\begin{corollary}\label{AS} If $S$ is the Smith normal form of matrix $A$, then
the subgroups $D_n(A)$ and $D_n(S)$ are conjugate in ${\rm Cr}_n$.
\end{corollary}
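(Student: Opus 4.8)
The plan is to deduce this immediately from Lemma~\ref{inC}. Recall from the paragraph preceding that lemma that, by its very definition, the Smith normal form $S$ of a nonzero integer matrix $A$ is a matrix obtained from $A$ by a finite sequence of elementary transformations of rows and columns. Hence Lemma~\ref{inC} applies verbatim with $B:=S$, and it yields at once that $D_n(A)$ and $D_n(S)$ are conjugate in $\Cr_n$.

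Concretely, the proof is carried out in a single step: invoke Lemma~\ref{inC} for the pair $(A,S)$. (If $A=0$ there is nothing to prove, as then $S=A$ and $D_n(A)=D_n(S)$; if $A\neq 0$, the Smith normal form is defined and the observation just made applies.) I do not foresee any obstacle: the only content is that passing to the Smith normal form is an instance of the operation handled by the lemma.

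It is worth recording the following byproduct, stated in a line. Since the invariant factors $s_{11},\dots,s_{rr}$, and hence the matrix $S$ itself, are uniquely determined by $A$, the corollary shows that the $\Cr_n$-conjugacy class of $D_n(A)$ depends only on $n$ and on the invariant factors of $A$, with $D_n(S)$ serving as a distinguished representative of that class. This follows directly from the uniqueness assertion recalled above, so no additional argument is needed.
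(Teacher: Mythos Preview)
Your argument is correct and is exactly the paper's approach: the corollary is stated immediately after Lemma~\ref{inC} with no separate proof, since the Smith normal form is by definition obtained from $A$ by elementary row and column operations, so Lemma~\ref{inC} applies directly. Your handling of the trivial case $A=0$ is a harmless extra precaution.
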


\begin{lemma}\label{sbgps}\

\begin{enumerate}[\hskip 2.2mm \rm(i)]

 \item Let $q_1\!\leqslant\!\cdots\!\leqslant\! q_r$ be all the invariant factors of matrix $A\!\in\! {\rm Mat}_{m\times n}(\mathbf Z)$.
         Then the group $D_n(A)$ is isomorphic to
        \begin{equation}\label{decompoo}
        {\boldsymbol \mu}_{q_1}
        \times \cdots\times {\boldsymbol \mu}_{q_r}
        \times {\mathbf G}_{\rm m}^{n-r}.
        \end{equation}
\item The closed $(n-m)$-dimensional subgroups of  $D_n$ are every possible subgroups  $D_n(A)$, where $A\in {\rm Mat}_{m\times n}(\mathbf Z)$, ${\rm rk}\,A=m$,
    and only they.

    \item ${\mathcal R}_A=\{(l_1,\ldots, l_n)\in {\mathbf Z}^n\mid
    D_n(A)\subseteq {\rm ker}\,\varepsilon_1^{l_1}\cdots\varepsilon_n^{l_n}\}$ for every
    $A\in {\rm Mat}_{m\times n}(\mathbf Z)$.

        \item  If $A\in {\rm Mat}_{s\times n}(\mathbf Z)$, $B\in {\rm Mat}_{t\times n}(\mathbf Z)$, then
     \begin{enumerate}[\hskip .2mm\rm(a)]

\item $D_n(A)=D_n(B)$ if and only if ${\mathcal R}_A
={\mathcal R}_B;$

\item The following properties are equivalent:
\begin{enumerate}
\item[$({\rm b}_1)$] $D_n(A)$ and $D_n(B)$ are conjugate in $\GL_n;$
 \item[$({\rm b}_2)$]  $D_n(A)$ and $D_n(B)$ are conjugate in  $N_{\GL_n}(D_n);$
 \item[$({\rm b}_2)$] by a permutation of columns, it is possible to tranform
   $B$ into a matrix  $C$ such that
${\mathcal R}_A
={\mathcal R}_C
$.
\end{enumerate}
\end{enumerate}
    \end{enumerate}
\end{lemma}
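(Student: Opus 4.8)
The plan is to handle the four parts essentially independently, reducing everything to standard facts about finitely generated abelian groups via the character-group duality. Throughout I will use the perfect pairing between the closed subgroups of $D_n$ and the subgroups of ${\rm X}(D_n)\cong\mathbf Z^n$: a closed subgroup $H\subseteq D_n$ corresponds to the subgroup $\{\chi\in {\rm X}(D_n)\mid H\subseteq{\rm ker}\,\chi\}$, which is exactly the kernel of the restriction homomorphism ${\rm X}(D_n)\to{\rm X}(H)$, and this correspondence is an inclusion-reversing bijection. Under this pairing, $D_n(A)$ corresponds to $\mathcal R_A$: indeed, by \eqref{iters} we have $D_n(A)=\bigcap_{(l_1,\ldots,l_n)\in\mathcal R_A}{\rm ker}\,\varepsilon_1^{l_1}\cdots\varepsilon_n^{l_n}$, so $D_n(A)\subseteq{\rm ker}\,\varepsilon_1^{l_1}\cdots\varepsilon_n^{l_n}$ whenever $(l_1,\ldots,l_n)\in\mathcal R_A$; the reverse inclusion in part (iii) is the nontrivial direction and follows because $\mathcal R_A$ is already saturated as a ``subgroup of all characters killing $D_n(A)$''\,---\,concretely, if $\varepsilon_1^{l_1}\cdots\varepsilon_n^{l_n}$ vanishes on $D_n(A)$ then it factors through the quotient torus-times-finite-group $D_n/D_n(A)$, whose character group is precisely $\mathcal R_A$. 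This will establish (iii), and (iv)(a) is then immediate: $D_n(A)=D_n(B)$ iff they have the same annihilator in ${\rm X}(D_n)$, i.e. $\mathcal R_A=\mathcal R_B$ by (iii).

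For part (i), I would diagonalize: by Corollary \ref{AS} the group $D_n(A)$ is conjugate in $\Cr_n$ to $D_n(S)$ where $S$ is the Smith normal form, but conjugation preserves isomorphism type, so it suffices to compute $D_n(S)$ directly. With $S$ having diagonal entries $q_1,\ldots,q_r$ (the invariant factors) and zeros elsewhere, the defining characters are $\lambda_i=\varepsilon_i^{q_i}$ for $i\leqslant r$ (and, if $S$ has more than $r$ rows, the remaining ones are trivial and impose no condition), so
\begin{equation*}
D_n(S)=\{(t_1x_1,\ldots,t_nx_n)\mid t_1^{q_1}=\cdots=t_r^{q_r}=1\}\cong\boldsymbol\mu_{q_1}\times\cdots\times\boldsymbol\mu_{q_r}\times\mathbf G_{\rm m}^{n-r},
\end{equation*}
which is \eqref{decompoo}. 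For part (ii), note that ${\rm dim}\,D_n(A)=n-r$ where $r={\rm rk}\,A$ (visible from the Smith form computation just done), so $D_n(A)$ has dimension $n-m$ exactly when $r=m$, i.e. when $A$ has full row rank; conversely every closed subgroup of $D_n$ is $D_n(A)$ for some $A$ by the duality above (take $A$ to be any matrix whose rows generate the annihilator subgroup), and one may choose $A$ with $m$ rows of rank $m$ precisely when the annihilator has rank $m$, i.e. when ${\rm dim}$ of the subgroup is $n-m$.

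Part (iv)(b) is the one requiring a little care, and I expect the equivalence $({\rm b}_1)\Leftrightarrow({\rm b}_2)$ to be the main point. The implication $({\rm b}_2)\Rightarrow({\rm b}_1)$ is trivial since $N_{\GL_n}(D_n)\subseteq\GL_n$. For $({\rm b}_1)\Rightarrow({\rm b}_2)$: if $g\in\GL_n$ conjugates $D_n(A)$ onto $D_n(B)$, then $gD_ng^{-1}$ and $D_n$ are both maximal tori of the common connected algebraic subgroup $H^0$ generated by $gD_ng^{-1}$ and $D_n$ (or, more simply, both are maximal tori of $Z_{\GL_n}(D_n(B))^0$ when $D_n(B)$ contains a torus; in the degenerate cases one argues inside $\GL_n$ directly using that any two maximal tori of a connected algebraic group are conjugate)\,---\,so after adjusting $g$ by an element of the centralizer we may assume $g$ normalizes $D_n$, i.e. $g\in N_{\GL_n}(D_n)$, giving $({\rm b}_2)$. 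The cleanest uniform route: $D_n$ is a maximal torus of $\GL_n$, and if $gD_n(A)g^{-1}=D_n(B)$ then $g\bigl(D_n(A)^0\bigr)g^{-1}=D_n(B)^0$ are tori, both contained in $D_n$; since $D_n\subseteq Z_{\GL_n}(D_n(B)^0)$ and $gD_ng^{-1}$ is a maximal torus of this centralizer too, conjugacy of maximal tori yields $z\in Z_{\GL_n}(D_n(B)^0)$ with $zgD_ng^{-1}z^{-1}=D_n$; replacing $g$ by $zg$ keeps $gD_n(A)g^{-1}=D_n(B)$ (since $z$ centralizes $D_n(B)^0$, and a short extra argument handles the component group) and now $g\in N_{\GL_n}(D_n)$. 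Finally $({\rm b}_2)\Leftrightarrow({\rm b}_3)$: by \eqref{torusDn} an element of $N_{\GL_n}(D_n)$ acts on ${\rm X}(D_n)$ by a signed permutation composed with... actually just a permutation of the $\varepsilon_i$ together with the torus part which acts trivially on characters\,---\,so conjugating $D_n(B)$ by $(t_1x_{\sigma(1)},\ldots,t_nx_{\sigma(n)})$ replaces $\mathcal R_B$ by its image under the permutation $\sigma$ of coordinates, i.e. permutes the columns of $B$; hence $D_n(A)$ and $D_n(B)$ are conjugate in $N_{\GL_n}(D_n)$ iff $\mathcal R_A$ equals $\mathcal R_C$ for some column-permutation $C$ of $B$, which by (iv)(a) is the same as $D_n(A)=D_n(C)$, proving $({\rm b}_3)$.
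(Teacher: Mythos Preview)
Your arguments for (i), (ii), (iii), and (iv)(a) are essentially the paper's, just phrased through the character-group duality from the outset; the paper proves (iii) by explicitly exhibiting the surjection $D_n\to\mathbf G_{\rm m}^m$ and invoking the universal property of the quotient, which is exactly the content of the ``$\mathcal R_A$ is already saturated'' step you allude to.

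For (iv)(b) there is a genuine difference and a small gap. The paper dispatches $({\rm b}_1)\Leftrightarrow({\rm b}_2)$ in one line by citing Serre's fusion theorem for maximal tori in algebraic groups \cite[1.1.1]{Serre1}. Your hands-on argument is the standard proof of that fusion theorem, but as written it is incomplete: you work in $Z_{\GL_n}(D_n(B)^0)$ and then admit that ``a short extra argument handles the component group'' to ensure $zD_n(B)z^{-1}=D_n(B)$. That extra argument is not automatic, since an element centralizing $D_n(B)^0$ need not normalize $D_n(B)$. The clean fix is to work in $Z_{\GL_n}(D_n(B))$ rather than $Z_{\GL_n}(D_n(B)^0)$: both $D_n$ and $gD_ng^{-1}$ contain $D_n(B)$ and are therefore maximal tori of this centralizer, and in $\GL_n$ the centralizer of any diagonalizable subgroup is a product of general linear groups on the simultaneous eigenspaces, hence connected. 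Then conjugacy of maximal tori gives $z\in Z_{\GL_n}(D_n(B))$ with $zgD_n(zg)^{-1}=D_n$, and now $zg\,D_n(A)\,(zg)^{-1}=zD_n(B)z^{-1}=D_n(B)$ holds on the nose. With this correction your direct route is fine and recovers exactly what the paper gets by citation.
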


\begin{proof}
(i) Let $S=(s_{ij})$
be the normal Smith form of matrix
 $A$.\;Then
$s_{11}=q_1,\ldots, s_{rr}=q_r$ and $s_{ij}=0$ in the other cases. Hence
$D_n(S)$ is isomorphic to group \eqref{decompoo}. But $D_n(A)$ is isomorphic to $D_n(S)$ by Corollary\;\ref{AS}.\;This proves\;(i).

(ii) From (i) we deduce that
\begin{equation}\label{rk}
\dim D_n(A)=n-{\rm rk}\,A;
 \end{equation}
 therefore, $\dim D_n(A)=n-m$, where ${\rm rk}\,A=m$.\;Conversely, let $H$ be a closed subgroup of  $D_n$ such that $\dim H=n-m$. Then $D_n/H$ is an $m$-dimensional torus \cite[p.\,114]{Borel} and therefore, there is an isomorphism  $\alpha\colon D_n/H\to {\mathbf G}_{\rm m}^m$. Let $\lambda_i\in {\rm X}(D_n)$ be the composition of homomorphisms
\begin{equation*}
D_n\xrightarrow{\;\pi\;} D_n/H\xrightarrow{ \;\alpha\; } {\mathbf G}_{\rm m}^m\xrightarrow{\;{\rm pr}_i\;} {\mathbf G}_{\rm m},
\end{equation*}
where $\pi$ s the canonical projection and ${\rm pr}_i$ is the projection to the
$i$th factor. Then
$H=\bigcap_{i=1}^m {\rm ker}\,\lambda_i$. From \eqref{chi} we then deduce that  $H=D_n(A)$ and from \eqref{rk} that ${\rm rk}\,A=m$. This proves (ii).

(iii) From  \eqref{matrix}, \eqref{chi} it follows that the left-hand side of
the equality under proof is contained in the right-hand side.\;Proving the inverse inclusion
consider a character
$\lambda=\varepsilon_1^{l_1}\cdots\varepsilon_n^{l_n}
$ whose kernel contains $D_n(A)$.\;Without changing $D_n(A)$ and ${\mathcal R}_A$, we can leave in $A$ only the rows that form a base of the group ${\mathcal R}_A$ removing the other rows, i.e., we can reduce our considerations to the case where ${\rm rk}\,A=m$. Consider then the characters
 $\lambda_1,\ldots,\lambda_m$ defined by formula \eqref{chi}, and the homomorphism
\begin{equation*}
\varphi\colon D_n\to {\mathbf G}_{\rm m}^m,\quad g\mapsto (\lambda_1(g),\ldots,\lambda_m(g)).
\end{equation*}
  In view of \eqref{chi}, we have ${\rm ker}\,\varphi=D_n(A)$. From this and   \eqref{rk} it follows that $\dim\varphi(D_n)=m$. Therefore, $\varphi$ is a surjection. Hence ${\mathbf G}_{\rm m}^m$ is the quotient group of  $D_n$ by $D_n(A)$ and $\varphi$ is the canonical homomorphism to it. Since $\lambda$ is constant on the fibers of $\varphi$, the universal property of quotient implies the existence of character $\mu\colon {\mathbf G}_{\rm m}^m\to {\mathbf G}_{\rm m}$ such that $\lambda=\mu\circ\varphi$. Hence
 $\lambda=\lambda_1^{c_1}\cdots\lambda_m^{c_m}$ for some $c_1,\ldots, c_m\in{\mathbf Z}$, and this means that $(l_1,\ldots,l_n)\in {\mathcal R}_A$.
This proves (iii).

(iv)(a) If $D_n(A)=D_n(B)$, then ${\mathcal R}_A={\mathcal R}_B$ because of (iii). Conversely, if ${\mathcal R}_A={\mathcal R}_B$, then $D_n(A)=D_n(B)$ because of \eqref{iters}. This proves (iv)(a).

(iv)(b) By fusion theo\-rem \cite[1.1.1]{Serre1} the subgroups $D_n(A)$ and $D_n(B)$ are conjugate in $\GL_n$ if and only if they are conjugate in $N_{\GL_n}(D_n)$. But \eqref{torusDn} and \eqref{chi}  imply that $D_n(A)$ and $D_n(B)$ are conjugate in $N_{\GL_n}(D_n)$ if and only if by a permutation of columns one can obtain from  $B$ a matrix $C$ such that $D_n(A)=D_n(C)$. Because of (iii)(a), the latter equality is equivalent to the equality ${\mathcal R}_C={\mathcal R}_A$.
This proves (iv)(b).
\quad $\square$ \renewcommand{\qed}{}\end{proof}

\begin{corollary}
\label{cod1} \
\begin{enumerate}[\hskip 2.2mm \rm(i)]

\item Let $(l_1,\ldots, l_n)\!\neq\! (0,\ldots, 0)$  and $d\!:=\!\mbox{\rm gcd}(l_1,\ldots,l_n)$.\,Then
$D_n(l_1,\ldots,l_n)$
is isomorphic to
 ${\boldsymbol \mu}_d\times \mathbf G_{\rm m}^{n-1}$.\;In particular, the group $D_n(l_1,\ldots,l_n)$
is connected {\rm(}i.e., is a torus{\rm)} if and only if $d=1$.

\item The closed $(n-1)$-dimensional subgroups of  $D_n$ are every possible subgroups $D_n(l_1,\ldots,l_n)$ with  $(l_1,\ldots, l_n)\neq (0,\ldots, 0)$ and only they.

    \item $D_n(l_1,\ldots,l_n)=D_n(l'_1,\ldots,l'_n)$ if and only if
    \begin{equation*}
    (l_1,\ldots,l_n)=\pm(l'_1,\ldots,l'_n).
    \end{equation*}

    \item The following properties are equivalent:
\begin{enumerate}
\item[$({\rm iv}_1)$]
$D_n(l_1,\ldots,l_n)$ and $D_n(l'_1,\ldots,l'_n)$ are conjugate in $\GL_n;$
\item[$({\rm iv}_2)$] $D_n(l_1,\ldots,l_n)$ and $D_n(l'_1,\ldots,l'_n)$ are conjugate in $N_{\GL_n}(D_n);$
\item[$({\rm iv}_3)$] there is a permutation
 $\sigma\in S_n$ such that
\begin{equation*}\label{sigm}
(l_1,\ldots, l_n)=\pm(l'_{\sigma(1)},\ldots, l'_{\sigma(n)}).
\end{equation*}
\end{enumerate}
\end{enumerate}
\end{corollary}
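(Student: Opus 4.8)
The plan is to deduce the entire statement from Lemma~\ref{sbgps} applied to the $1\times n$ matrix $A=(l_1,\ldots,l_n)$, supplemented by one elementary remark about the free abelian group $\mathbf Z^n$: for nonzero rows $v,w\in\mathbf Z^n$ the cyclic subgroups they generate coincide, $\mathcal{R}_{(v)}=\mathcal{R}_{(w)}$, if and only if $v=\pm w$. Indeed, if these subgroups coincide then $v=aw$ and $w=bv$ for some $a,b\in\mathbf Z$, so $v=abv$; since $\mathbf Z^n$ is torsion-free and $v\neq 0$ this forces $ab=1$, hence $a=b=\pm1$. The converse is obvious, and the case $v=w=0$ is trivial. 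With this in hand the four parts are routine specializations.

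For (i): the $1\times n$ matrix $A=(l_1,\ldots,l_n)\neq(0,\ldots,0)$ has a single invariant factor $q_1=f_1/f_0$, where $f_1$ is the gcd of its order-one minors, i.e. $q_1=\gcd(l_1,\ldots,l_n)=d$, and $\mathrm{rk}\,A=1$; so Lemma~\ref{sbgps}(i) gives $D_n(l_1,\ldots,l_n)\cong\boldsymbol\mu_d\times\mathbf G_{\rm m}^{n-1}$, which is connected precisely when $\boldsymbol\mu_d$ is trivial, i.e. when $d=1$. For (ii): by Lemma~\ref{sbgps}(ii) the closed $(n-1)$-dimensional subgroups of $D_n$ are exactly the $D_n(A)$ with $A\in\mathrm{Mat}_{1\times n}(\mathbf Z)$ of rank $1$, and these $A$ are precisely the nonzero rows $(l_1,\ldots,l_n)$.

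For (iii): by Lemma~\ref{sbgps}(iv)(a) one has $D_n(l_1,\ldots,l_n)=D_n(l'_1,\ldots,l'_n)$ if and only if $\mathcal{R}_{(l_1,\ldots,l_n)}=\mathcal{R}_{(l'_1,\ldots,l'_n)}$, and by the remark above this holds iff $(l_1,\ldots,l_n)=\pm(l'_1,\ldots,l'_n)$ (in the degenerate situation where one of the two rows vanishes, use \eqref{all0} and a dimension count via (i)). For (iv): Lemma~\ref{sbgps}(iv)(b) gives the equivalence of $(\mathrm{iv}_1)$, $(\mathrm{iv}_2)$, and the existence of a permutation of the columns of $(l'_1,\ldots,l'_n)$ producing a row $C$ with $\mathcal{R}_C=\mathcal{R}_{(l_1,\ldots,l_n)}$. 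Permuting the $n$ columns of the one-row matrix $(l'_1,\ldots,l'_n)$ yields exactly the rows $(l'_{\sigma(1)},\ldots,l'_{\sigma(n)})$ with $\sigma\in S_n$, so by the remark this is equivalent to $(\mathrm{iv}_3)$.

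I do not expect a genuine obstacle: the corollary is essentially an unwinding of Lemma~\ref{sbgps} in the case $m=1$. The only points that call for a little care are the elementary $\mathbf Z^n$-remark isolated above and the bookkeeping identifying a permutation of the columns of a single-row matrix with a permutation of the entries $l'_1,\ldots,l'_n$; both are immediate once written out.
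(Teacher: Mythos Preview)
Your proposal is correct and is exactly the intended derivation: the paper states this as a corollary of Lemma~\ref{sbgps} without a separate proof, and what you have written simply makes explicit the specialization to $m=1$ together with the elementary fact that two nonzero vectors in $\mathbf Z^n$ generate the same cyclic subgroup iff they differ by a sign.
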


The following Lemma \ref{equi} gives an effective numerical criterion for the equa\-li\-ty  ${\mathcal R}_A={\mathcal R}_B$ from Lemma \ref{sbgps}(iii).

 Let $A\in {\rm Mat}_{m\times n}({\mathbf Z})$, ${\rm rk}\,A=m$. For every strictly
increasing sequence of  $m$
 integers $i_1,\ldots, i_m$ taken from the interval $[1, n]$, put
\begin{equation}\label{pluck}
p_{i_1,\ldots, i_m}(A):={\rm det}\,A_{i_1,\ldots, i_m},
\end{equation}
where $A_{i_1,\ldots, i_m}$ is the submatrix of matrix $A$ obtained by intersecting rows with numbers $1,\ldots, m$ and columns with numbers $i_1,\ldots, i_m$ (it is natural to call the $p_{i_1,\ldots, i_m}(A)$'s the {\it Pl\"ucker coordinates of matrix} $A$).

\begin{lemma}\label{equi} For every two matrices $A$  and  $B\in {\rm Mat}_{m\times n}(\mathbf Z)$  of rank $m$
the following properties are equivalent$:$
\begin{enumerate}[\hskip 2.2mm \rm(i)]
\item ${\mathcal R}_A
={\mathcal R}_B.$
\item Two conditions hold:
\begin{enumerate}[\hskip .2mm \rm(a)]
\item either $p_{i_1,\ldots, i_m}(A)=p_{i_1,\ldots, i_m}(B)$ for all $i_1,\ldots, i_m$, or $p_{i_1,\ldots, i_m}(A)=-p_{i_1,\ldots, i_m}(B)$ for all $i_1,\ldots, i_m;$
    \item   for every sequence $i_1,\ldots, i_m$ such that $p_{i_1,\ldots, i_m}(A)\neq 0$, the fol\-low\-ing inclusion holds:
     \begin{equation}\label{determ}
     B_{i_1,\ldots, i_m}(A_{i_1,\ldots, i_m})^{-1}\in {\rm Mat}_{m\times m}(\mathbf Z).
     \end{equation}
\end{enumerate}
\end{enumerate}
\end{lemma}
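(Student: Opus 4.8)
The plan is to recast the equality $\mathcal R_A = \mathcal R_B$ of subgroups of $\mathbf Z^n$ in terms of the lattices they generate and then translate this into statements about Plücker coordinates. First I would note that by Lemma~\ref{sbgps}(iii) (or directly from \eqref{matrix}), $\mathcal R_A$ depends only on the row space of $A$; moreover the hypothesis $\operatorname{rk} A = \operatorname{rk} B = m$ means the rows of $A$ (resp.\ $B$) form a $\mathbf Q$-basis of the common $\mathbf Q$-span once condition (ii)(a) forces $A$ and $B$ to have the same rational row space — so the first thing to establish is that any one of the stated conditions implies $A$ and $B$ have the same $\mathbf Q$-row-space. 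For (i)$\Rightarrow$(ii) this is immediate since $\mathcal R_A = \mathcal R_B$ already forces equal rational spans; for (ii)$\Rightarrow$(i), condition (ii)(a) says the Plücker vectors are equal up to a global sign, and by the standard fact that the Plücker coordinates of a full-rank $m\times n$ matrix determine its row space (projectively), (ii)(a) already gives equality of rational row spaces.

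The core of the argument is then linear algebra over $\mathbf Z$. Having fixed a common rational row space $V = \mathcal R_A\otimes\mathbf Q = \mathcal R_B\otimes\mathbf Q$, I would pick any index set $i_1<\dots<i_m$ with $p_{i_1,\dots,i_m}(A)\neq 0$; then $A_{i_1,\dots,i_m}$ is invertible over $\mathbf Q$ and the matrix $M := B_{i_1,\dots,i_m}(A_{i_1,\dots,i_m})^{-1}$ is the unique rational matrix with $MA = B$ (uniqueness because $A$ has full row rank). The point is that $\mathcal R_B\subseteq\mathcal R_A$ iff every row of $B$ is an integer combination of rows of $A$, i.e.\ iff $M\in\operatorname{Mat}_{m\times m}(\mathbf Z)$; and $\mathcal R_A=\mathcal R_B$ iff additionally $M^{-1}=A_{i_1,\dots,i_m}(B_{i_1,\dots,i_m})^{-1}\in\operatorname{Mat}_{m\times m}(\mathbf Z)$, i.e.\ iff $M\in\GL_m(\mathbf Z)$. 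Taking determinants, $\det M = p_{i_1,\dots,i_m}(B)/p_{i_1,\dots,i_m}(A)$, and by Cramer's rule the entries of $M$ are ratios of other Plücker coordinates of $B$ over $p_{i_1,\dots,i_m}(A)$ (expanding $B_{i_1,\dots,i_m}$ against the adjugate of $A_{i_1,\dots,i_m}$, whose entries are $(m-1)\times(m-1)$ minors of $A$, which are again, up to sign, Plücker coordinates of $A$ padded appropriately — here I would invoke the Plücker/Sylvester relations relating products of minors). This is the computation I would not grind through in detail, but the upshot is that $M\in\GL_m(\mathbf Z)$ is equivalent to: (a) $p(A) = \pm p(B)$ coordinatewise, together with (b) $M\in\operatorname{Mat}_{m\times m}(\mathbf Z)$, which is exactly \eqref{determ}.

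So the proof structure is: (i)$\Rightarrow$(ii): from $\mathcal R_A=\mathcal R_B$ deduce $M\in\GL_m(\mathbf Z)$ for every nonvanishing minor index set, hence (ii)(b); and since a change of $\mathbf Z$-basis of the same lattice multiplies all Plücker coordinates by $\det M = \pm1$ uniformly, deduce (ii)(a). (ii)$\Rightarrow$(i): condition (ii)(a) gives the common rational row space and pins down $\det M = \pm1$ for every index set with $p_{i_1,\dots,i_m}(A)\neq0$; condition (ii)(b) gives $M\in\operatorname{Mat}_{m\times m}(\mathbf Z)$ for all such index sets, and combined with $\det M=\pm1$ this yields $M\in\GL_m(\mathbf Z)$, hence $MA=B$ with $M$ unimodular, so $\mathcal R_A=\mathcal R_B$.

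The main obstacle I anticipate is the bookkeeping in the step connecting \emph{all} the integrality conditions to a \emph{single} choice of nonvanishing $m\times m$ minor: one must check that \eqref{determ} for one good index set, together with the global sign condition (ii)(a) on \emph{all} Plücker coordinates, already forces $\mathcal R_B\subseteq\mathcal R_A$ as a full containment of lattices (not merely that $B$'s rows lie in $V$), and symmetrically for the reverse containment. The cleanest way around this is to argue purely through $M$: fix one index set $J$ with $p_J(A)\neq0$, set $M = B_J A_J^{-1}$; then \eqref{determ} for $J$ says $M\in\operatorname{Mat}_{m\times m}(\mathbf Z)$, (ii)(a) evaluated at $J$ gives $|\det M|=1$, hence $M\in\GL_m(\mathbf Z)$, hence $MA=B$ and $M^{-1}B=A$, giving $\mathcal R_A=\mathcal R_B$ with no further case analysis — the other instances of (ii)(a) and \eqref{determ} are then automatically consistent and need not be separately invoked. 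I would present it in exactly this order to keep the combinatorics of the Plücker relations confined to the (routine) verification that, conversely, $\mathcal R_A=\mathcal R_B$ implies (ii)(a) holds for \emph{all} index sets.
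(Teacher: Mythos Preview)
Your proposal is correct and follows essentially the same route as the paper: both directions go through the change-of-basis matrix $M\in\GL_m(\mathbf Q)$ with $MA=B$, using (ii)(a) to pin down the common $\mathbf Q$-row space and $\det M=\pm 1$, and (ii)(b) to force $M\in\mathrm{Mat}_{m\times m}(\mathbf Z)$, hence $M\in\GL_m(\mathbf Z)$. Your closing observation that a single nonvanishing index set $J$ suffices is exactly the content of the paper's Remark following the lemma; the Cramer/Sylvester digression in the middle is unnecessary and can be dropped.
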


\begin{proof}  Whereas ${\rm rk}\,A={\rm rk}\,B=m$, the rows of $A$ and $B$ form the bases in  ${\mathcal R}_A$ and ${\mathcal R}_B$ respectively. Therefore, ${\mathcal R}_A={\mathcal R}_B$ if and only if when there is a matrix  $Q\in\GL_m({\mathbf Z})$ such that
\begin{equation}\label{C}
A=QB.
\end{equation}

Let ${\mathcal R}_A={\mathcal R}_B$. Then \eqref{C} implies that
\begin{equation}\label{ABC}
A_{i_1,\ldots, i_m}=QB_{i_1,\ldots, i_m}
 \end{equation}
 for every $i_1,\ldots, i_m$, and therefore, $p_{i_1,\ldots, i_m}(A)=\det Q p_{i_1,\ldots, i_m}(B)$ because of \eqref{pluck}.
Since $Q\in\GL_m({\mathbf Z})$, we have $\det Q=\pm 1$. Hence condition (ii)(a) holds.
If $p_{i_1,\ldots, i_m}(A)\neq 0$, then $p_{i_1,\ldots, i_m}(B)\neq 0$ as well, hence $B_{i_1,\ldots, i_m}$ is nondegenerate and \eqref{ABC} implies that $Q=A_{i_1,\ldots, i_m}(B_{i_1,\ldots, i_m})^{-1}$.\;Hence condition (ii)(b) holds. This proves (i)$\Rightarrow$(ii).

Proving the inverse implication, consider
${\mathbf Z}^n$ as a subset of the coordinate vectors space (of rows)  ${\mathbf Q}^n$. Condition (ii)(a) shows that  the ${\mathbf Q}$-linear spans of subsets  ${\mathcal R}_A$ and ${\mathcal R}_B$ of ${\mathbf Q}^n$ have the same Pl\"ucker coordinates. Hence these spans are one and the same linear subspace
 $L$ (see, e.g.,\;\cite[Theorem 10.1]{SR}). Since the rows of $A$ and rows of $B$ form two bases in  $L$, there exists a matrix $P\in \GL_m(\mathbf Q)$ such that $A=PB$. Therefore, $A_{i_1,\ldots, i_m}=PB_{i_1,\ldots, i_m}$ for every
$i_1,\ldots, i_m$ and hence $P=A_{i_1,\ldots, i_m}(B_{i_1,\ldots, i_m})^{-1}$ if
$p_{i_1,\ldots, i_m}(B)\neq 0$. Then (ii)(b) implies that $P\in \GL_n(\mathbf Z)$. Therefore, ${\mathcal R}_A={\mathcal R}_B$. This proves (ii)$\Rightarrow$(i).
\quad $\square$ \renewcommand{\qed}{}\end{proof}

\begin{remark} \

$1$. {\rm In the proof of Lemma \ref{equi} it is established that, in fact,
 (ii)(a) implies that the matrix $B_{i_1,\ldots, i_m}(A_{i_1,\ldots, i_m})^{-1}$ is independent of the choice of a sequence  $i_1,\ldots, i_m$ for which $p_{i_1,\ldots, i_m}(A)\neq 0$. Therefore,  (ii)(b) follows from (ii)(a) and feasibility of \eqref{determ} for any {\it one} such a sequence.}

$2$. {\rm If $m\!=\!1$, then
 (ii)(b) follows from (ii)(a) (but for $m>1$ this is not so).}
\end{remark}

\begin{theorem}[Classificaton of diagonalizable subgroups of $\Aff_n$ up to conju\-ga\-cy in ${\rm Cr}_n$]\label{dc}\

 \begin{enumerate}[\hskip 2.2mm \rm(i)]
 \item Two diagonalizable subgroups of the group $\Aff_n$ are conjugate in $\Cr_n$ if and only if they are isomorphic.
 \item
  Any diagonalizable subgroup $G$ of the group $\Aff_n$ is conjugate in $\Cr_n$ to a unique closed subgroup of the torus $D_n$ that has the form
  \begin{equation}\label{canon}
   {\rm ker}\,\varepsilon_{r+1}^{d_1}\cap\ldots \cap {\rm ker}\,\varepsilon_{r+s}^{d_s}\cap {\rm ker}\,\varepsilon_{r+s+1}\cap\ldots \cap {\rm ker}\,\varepsilon_{n}
\end{equation}
where $0\leqslant r\leqslant n$, $0\leqslant s\leqslant n$, $r+s\leqslant n$,
$2\leqslant d_1$ and  $d_i$ divides $d_{i+1}$ for every $i<s$. The integers determining subgroup {\rm\eqref{canon}} have the following meaning: $r=\dim G$ and
$d_1,\ldots, d_s$ are all the invariant factors of the finite Abelian group $G/G^0$.
 \end{enumerate}
\end{theorem}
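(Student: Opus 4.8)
The plan is to reduce, by conjugation, an arbitrary diagonalizable $G\subseteq\Aff_n$ to a closed subgroup of the standard torus $D_n$, then use the matrix machinery of Lemmas \ref{inC}--\ref{sbgps} to bring it to the normal form \eqref{canon}, and finally extract uniqueness and part (i) from the isomorphism type. \emph{Step 1 (reduction into $D_n$).} Write $\Aff_n=\mathbf G_{\rm a}^n\rtimes\GL_n$ with $V:=\mathbf G_{\rm a}^n$ the translation subgroup and $\pi\colon\Aff_n\to\GL_n$ the projection. Every element of the diagonalizable group $G$ is semisimple, while every nontrivial element of $V$ is unipotent, so $G\cap V=\{1\}$ and $\pi|_G\colon G\xrightarrow{\ \sim\ }\overline G:=\pi(G)$. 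Via this identification the inclusion $G\hookrightarrow\Aff_n$ is the map $g\mapsto(c(g),g)$ for a rational $1$-cocycle $c\colon\overline G\to V$; since $\overline G\cong G$ is diagonalizable, hence linearly reductive in characteristic zero, $H^1(\overline G,V)=0$, so $c$ is a coboundary $c(g)=v_0-g\cdot v_0$. Conjugating $G$ by the translation by $v_0$ then moves it into $\overline G\subseteq\GL_n$, and a diagonalizable subgroup of $\GL_n$ is conjugate in $\GL_n$ into $D_n$. (Equivalently: a linearly reductive group acting affinely on $\An$ has a fixed point, which one translates to the origin.)

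\emph{Step 2 (normal form; existence in (ii)).} So $G$ is conjugate in $\Cr_n$ to a closed subgroup $H\subseteq D_n$. By Lemma \ref{sbgps}(ii), after discarding superfluous rows (keeping a basis of $\mathcal R_A$, which by Lemma \ref{sbgps}(iii),(iv)(a) leaves the group unchanged) we may write $H=D_n(A)$ with $A\in{\rm Mat}_{m\times n}(\mathbf Z)$ of rank $m$. Let $S$ be the Smith normal form of $A$; by Corollary \ref{AS}, $D_n(A)$ and $D_n(S)$ are conjugate in $\Cr_n$. Permuting the columns of $S$ — again an elementary operation, so Lemma \ref{inC} applies — to place the diagonal entries that are $\geqslant2$ in positions $r+1,\dots,r+s$ (with $r:=n-m$) and the diagonal entries equal to $1$ in positions $r+s+1,\dots,n$, we obtain a subgroup equal, on the nose, to \eqref{canon}, whose exponents $d_1\leqslant\cdots\leqslant d_s$ are the invariant factors of $A$ exceeding $1$; the Smith divisibility $s_{ii}\mid s_{i+1,i+1}$ gives $d_1\geqslant2$ and $d_i\mid d_{i+1}$. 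Since conjugate groups are isomorphic, \eqref{canon}$\,\cong G$; as \eqref{canon}$\,\cong\mathbf G_{\rm m}^{\,r}\times\boldsymbol\mu_{d_1}\times\cdots\times\boldsymbol\mu_{d_s}$ by Lemma \ref{sbgps}(i), comparing dimensions and component groups forces $r=\dim G$ and $d_1,\dots,d_s$ to be the invariant factors of $G/G^0$. This proves the existence part of (ii).

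\emph{Step 3 (uniqueness; part (i)).} The tuple $(r;d_1,\dots,d_s)$, subject to $d_1\geqslant2$ and $d_i\mid d_{i+1}$, is a complete isomorphism invariant of the group \eqref{canon}, recording its dimension and the (uniquely determined) invariant factors of its component group. Hence if two subgroups of $D_n$ of the form \eqref{canon} are conjugate in $\Cr_n$, they are isomorphic, so carry the same tuple, so coincide — giving uniqueness in (ii). For (i), ``conjugate $\Rightarrow$ isomorphic'' is trivial; conversely, if $G\cong G'$ are diagonalizable subgroups of $\Aff_n$, then by Step 2 each is conjugate in $\Cr_n$ to the subgroup \eqref{canon} with parameters $\bigl(\dim(\cdot);\ \text{invariant factors of }(\cdot)/(\cdot)^0\bigr)$, and these parameters agree for $G$ and $G'$, so $G$ and $G'$ are conjugate in $\Cr_n$ to one and the same subgroup, hence to each other.

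\emph{Main obstacle.} Essentially all the content beyond bookkeeping with Lemmas \ref{inC}--\ref{sbgps} sits in Step 1, i.e.\ the vanishing $H^1(\overline G,V)=0$ — equivalently, the existence of a common fixed point for the affine action of $G$ on $\An$ — which is what lets one replace a diagonalizable subgroup of $\Aff_n$ by a diagonalizable subgroup of $\GL_n$; once $G$ is inside $D_n$, Corollary \ref{AS} and the Smith normal form finish the argument with no further surprises.
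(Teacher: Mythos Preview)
Your proof is correct and follows the same route as the paper's---reduce into $D_n$, then apply Corollary~\ref{AS} (Smith normal form) and read off the isomorphism invariants---the only difference being that in Step~1 you give the explicit $H^1$-vanishing/fixed-point argument where the paper simply cites conjugacy of maximal reductive subgroups of $\Aff_n$ \cite[5.1]{BS}. The paper also organizes the logic as (i)$\Rightarrow$(ii) rather than your (ii)-existence $\Rightarrow$ (i) and (ii)-uniqueness, but this is purely cosmetic.
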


\begin{proof} Since
maximal reductive subgroups of algebraic group are conjugate
 (see\,\cite[5.1]{BS}) and $\GL_n$ is one of them in  $\Aff_n$, every diagonalizable subgroup of  $\Aff_n$ is conjugate to a subgroup of $\GL_n$. In turn, every diago\-na\-lizable subgroup of $\GL_n$ is conjugate to a subgroup of the torus $D_n$ (see\,\cite[I.4.6]{Borel}). In view of Lemma \ref{sbgps}(ii) this shows that it suffices to prove (i) for the subgroups $D_n(A)$ and $D_n(B)$ of the torus $D_n$. Further, appending, if necessary, zero rows we may assume that
 $A$ and $B$ have the same number of rows. Let now $D_n(A)$ and $D_n(B)$ be isomorphic. Then their dimensions are equal and the groups $D_n(A)/D_n(A)^0$ and
$D_n(B)/D_n(B)^0$ have the same invariant factors. This and Lemma \ref{sbgps}(i) imply
that the matrices $A$ and $B$ have the same invariant factors (because the latter are obtained by appending the same number of 1's to the invariant factors of the specified  groups). Hence the normal Smith forms of $A$ and $B$ are equal. By Corollary \ref{AS} this implies that  $D_n(A)$ and $D_n(B)$ are conjugate in ${\Cr}_n$. This proves (i).

It is clear that the integers determining subgroup {\rm\eqref{canon}} have the meaning specified in (ii). Since every diagonalizable group is a direct product of a finite Abelian group and a torus, it is uniquely, up to isomorphism, determined by its dimension and the invariant factors of the group of connected components. This and (i) implies (ii).
\quad $\square$ \renewcommand{\qed}{}\end{proof}

\begin{corollary} $D_n(A)$ and $D_n(B)$ are conjugate in ${\rm Cr}_n$ if and only if  $A$ and $B$ have the same invariant factors.
\end{corollary}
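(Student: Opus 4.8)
The plan is to derive the statement from Theorem~\ref{dc}(i) together with Lemma~\ref{sbgps}(i), with no essentially new argument. Since $D_n(A)$ and $D_n(B)$ are diagonalizable subgroups of $D_n\subseteq\GL_n\subseteq\Aff_n$, Theorem~\ref{dc}(i) reduces the claim to the assertion that $D_n(A)$ and $D_n(B)$ are isomorphic if and only if $A$ and $B$ have the same invariant factors.

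To prove this equivalence I would read off the isomorphism types via Lemma~\ref{sbgps}(i): if $q_1\leqslant\cdots\leqslant q_r$, $r={\rm rk}\,A$, are the invariant factors of $A$, then $D_n(A)\cong\boldsymbol\mu_{q_1}\times\cdots\times\boldsymbol\mu_{q_r}\times\mathbf G_{\rm m}^{\,n-r}$, and similarly for $B$. Because a diagonalizable group is the direct product of a torus with a finite Abelian group, uniquely up to isomorphism, $D_n(A)\cong D_n(B)$ is equivalent to the conjunction of $\dim D_n(A)=\dim D_n(B)$ and $D_n(A)/D_n(A)^0\cong D_n(B)/D_n(B)^0$. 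By Lemma~\ref{sbgps}(i) the first equality amounts to ${\rm rk}\,A={\rm rk}\,B$, and the second to the statement that the invariant factors of $A$ exceeding $1$ coincide with those of $B$ (the trivial factors $\boldsymbol\mu_1$ disappearing from the products). Since the full list of invariant factors of $A$ is obtained from the sublist of those exceeding $1$ by prepending exactly ${\rm rk}\,A-\#\{i:q_i>1\}$ ones, these two lists for $A$ and for $B$ agree precisely when the ranks agree and the factors $>1$ agree, i.e.\ precisely under the conjunction above. Together with Theorem~\ref{dc}(i) this proves the corollary.

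The one point requiring a word of care — not really an obstacle — is that $A$ and $B$ need not have the same number of rows; but appending zero rows changes neither $D_n(\cdot)$ (a zero row contributes the trivial character $\varepsilon_1^{0}\cdots\varepsilon_n^{0}$, whose kernel is all of $D_n$; cf.\ \eqref{all0}) nor the invariant factors (it alters neither the rank nor the gcd's of the minors of the matrix), so one may assume $A$ and $B$ to have the same size. With that normalization, having the same invariant factors is the same as having equal Smith normal forms, which yields the ``if'' direction directly from Corollary~\ref{AS}, giving an alternative to the appeal to Theorem~\ref{dc}(i) for that implication.
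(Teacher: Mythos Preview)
Your proof is correct and takes essentially the same approach as the paper: the corollary is stated there without proof because the equivalence ``$D_n(A)\cong D_n(B)$ iff $A$ and $B$ have the same invariant factors'' is already embedded in the proof of Theorem~\ref{dc} (see the parenthetical remark about appending $1$'s), and combining this with Theorem~\ref{dc}(i) is exactly what you do. Your remark about padding with zero rows and the alternative route via Corollary~\ref{AS} likewise mirror the paper's argument.
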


\begin{corollary}
\label{DCr}
Every torus $T$ in $\Aff_n$ is conjugate in ${\rm Cr}_n$ to the torus $D_r$, где $r=\dim T$.
\end{corollary}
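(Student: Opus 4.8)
The plan is to read this off directly from Theorem \ref{dc}, a torus being in particular a diagonalizable subgroup. First I recall that under the standard embedding $\Cr_r\hookrightarrow\Cr_n$, $(g_1,\ldots,g_r)\mapsto(g_1,\ldots,g_r,x_{r+1},\ldots,x_n)$, the torus $D_r$ of \eqref{nDn} becomes $\{(t_1x_1,\ldots,t_rx_r,x_{r+1},\ldots,x_n)\mid t_i\in k^\times\}\subseteq D_n\subset\GL_n\subset\Aff_n$, which is exactly ${\rm ker}\,\varepsilon_{r+1}\cap\cdots\cap{\rm ker}\,\varepsilon_n$; this is the subgroup of the form \eqref{canon} associated with the parameters $s=0$ and dimension $r$.

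Now let $T\subseteq\Aff_n$ be a torus and put $r=\dim T$. Since $T$ is a torus, it is a diagonalizable subgroup of $\Aff_n$ with $T=T^0$, so the finite Abelian group $T/T^0$ is trivial and has no invariant factors. Applying Theorem \ref{dc}(ii) to $G=T$, we conclude that $T$ is conjugate in $\Cr_n$ to the closed subgroup of $D_n$ of the form \eqref{canon} determined by these data, namely to ${\rm ker}\,\varepsilon_{r+1}\cap\cdots\cap{\rm ker}\,\varepsilon_n=D_r$ (in particular $r\leqslant n$, so the statement makes sense). Alternatively, one can invoke Theorem \ref{dc}(i): $T$ and $D_r$ are both $r$-dimensional tori, hence isomorphic as algebraic groups, hence conjugate in $\Cr_n$.

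I expect no genuine obstacle here: the corollary is an immediate specialization of Theorem \ref{dc}, and the only point requiring a word is the routine identification of the canonical representative \eqref{canon} with $s=0$ as the torus $D_r$ sitting inside $\Aff_n$ via the standard tower of embeddings.
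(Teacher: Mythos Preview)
Your proof is correct and follows exactly the paper's approach: the paper's proof consists of the single sentence ``This follows from Theorem \ref{dc}(ii),'' and you have simply spelled out the routine identification of the canonical representative \eqref{canon} with $s=0$ as $D_r$.
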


\begin{proof} This follows from Theorem \ref{dc}(ii).
\quad $\square$ \renewcommand{\qed}{}\end{proof}

\begin{corollary}\label{isoconj} Every two isomorphic finite Abelian subgroups of  $\Aff_n$ are conjugate in $\Cr_n$.
\end{corollary}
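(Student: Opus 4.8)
The plan is to deduce the corollary directly from Theorem~\ref{dc}(i). The only point needing a remark is that a finite Abelian subgroup of $\Aff_n$ is in particular a \emph{diagonalizable} algebraic subgroup of $\Aff_n$, so that Theorem~\ref{dc} applies to it.

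First I would recall the standard fact that, since $k$ is algebraically closed of characteristic zero and hence contains all roots of unity, every finite Abelian group $G$ is a diagonalizable algebraic group: decomposing $G\cong\boldsymbol\mu_{d_1}\times\cdots\times\boldsymbol\mu_{d_s}$ one sees that the coordinate algebra $k[G]$ is a product of $|G|$ copies of $k$, with character group ${\rm X}(G)={\rm Hom}(G,\mathbf G_{\rm m})$ finite of order $|G|$; equivalently, $G$ contains no nontrivial unipotent element. Moreover, for finite groups every abstract group isomorphism is automatically an isomorphism of the associated (constant) algebraic groups. Consequently every finite Abelian subgroup of $\Aff_n$ is a diagonalizable subgroup of $\Aff_n$ in the sense of Theorem~\ref{dc}, and two isomorphic such subgroups are isomorphic as algebraic groups.

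Now, given two isomorphic finite Abelian subgroups $G_1,G_2\subseteq\Aff_n$, they are diagonalizable and isomorphic as algebraic groups, so Theorem~\ref{dc}(i) at once gives that $G_1$ and $G_2$ are conjugate in $\Cr_n$. There is no genuine obstacle here: all the substance has already been established in Theorem~\ref{dc}. If one wishes to make the statement effective, one may further note, using Theorem~\ref{dc}(ii), that the common conjugacy representative inside the torus $D_n$ can be taken to be ${\rm ker}\,\varepsilon_{1}^{d_1}\cap\cdots\cap{\rm ker}\,\varepsilon_{s}^{d_s}\cap{\rm ker}\,\varepsilon_{s+1}\cap\cdots\cap{\rm ker}\,\varepsilon_{n}$, where $d_1\mid\cdots\mid d_s$ are the invariant factors of $G_1\cong G_2$; the constraint $s\leqslant n$ required in \eqref{canon} is satisfied because, by the proof of Theorem~\ref{dc} together with Lemma~\ref{sbgps}, a finite Abelian subgroup of $\Aff_n$ is $\Cr_n$-conjugate to some $D_n(A)$ with ${\rm rk}\,A=n$, hence has at most $n$ invariant factors.
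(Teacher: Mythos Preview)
Your proof is correct and follows essentially the same route as the paper: reduce to Theorem~\ref{dc}(i) by observing that a finite Abelian subgroup of $\Aff_n$ is diagonalizable. The only cosmetic difference is in how diagonalizability is justified: you argue directly that a finite Abelian group over an algebraically closed field of characteristic zero is abstractly isomorphic to a product of $\boldsymbol\mu_{d_i}$'s and hence diagonalizable, whereas the paper phrases it via semisimplicity of elements of finite order (every element of finite order in $\Aff_n$ is semisimple, so the group is reductive, hence conjugate into $\GL_n$, and a commutative group of semisimple elements in $\GL_n$ is diagonalizable by \cite[Prop.~4.6(b)]{Borel}). Both justifications are standard and yield the same conclusion; your additional remarks on the canonical representative are correct but not needed for the corollary as stated.
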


\begin{proof} Since ${\rm char}\,k=0$, every element in $\Aff_n$ of finite order is semisimple. Hence every finite Abelian subgroup of $\Aff_n$ is reductive, and therefore, is conjugate in $\Aff_n$ to a subgroups of  $\GL_n$ (see the proof of Theorem \ref{dc}).
But every commutative subgroup of  $\GL_n$ consisting only of semisimple elements is diagonalizable (see\,\cite[Prop.\,4.6(b)]{Borel}).
The claim now follows from Theorem \ref{dc}(i).
\quad $\square$ \renewcommand{\qed}{}\end{proof}

\begin{corollary}[{\cite[Thm.\,1]{Blanc06}}] Every two elements in
 $\Aff_n$ of the same finite order are conjugate in  ${\rm Cr}_n$.
\end{corollary}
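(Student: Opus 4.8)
The plan is to reduce, via the classification already obtained, to two diagonal automorphisms of the same order generating one and the same cyclic subgroup of $D_n$, and then to find the conjugating element inside the copy of ${\rm Aut}_{\rm gr}D_n=\GL_n(\mathbf Z)$ sitting in $\Cr_n$. So let $g,h\in\Aff_n$ have the same finite order $d$. Since ${\rm char}\,k=0$ both are semisimple, hence $\langle g\rangle$ and $\langle h\rangle$ are finite Abelian subgroups of $\Aff_n$, each isomorphic to $\mathbf Z/d\mathbf Z$; by Theorem \ref{dc}(ii) (which is in essence Corollary \ref{isoconj}) each of them is conjugate in $\Cr_n$ to the canonical subgroup $C:={\rm ker}\,\varepsilon_1^{d}\cap{\rm ker}\,\varepsilon_2\cap\cdots\cap{\rm ker}\,\varepsilon_n=\{(tx_1,x_2,\ldots,x_n)\mid t^d=1\}$ of $D_n$ (the instance $r=0$, $s=1$, $d_1=d$ of \eqref{canon}). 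Replacing $g$ and $h$ by suitable $\Cr_n$-conjugates, I may therefore assume $\langle g\rangle=\langle h\rangle=C$; then $g$ and $h$ are two generators of this cyclic group, so $g=(\zeta x_1,x_2,\ldots,x_n)$ for a primitive $d$-th root of unity $\zeta$ and $h=g^{k}=(\zeta^{k}x_1,x_2,\ldots,x_n)$ for some $k$ with $\gcd(k,d)=1$. It remains to conjugate $g$ to $g^{k}$ in $\Cr_n$.

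Next I would pass to the torus picture as in the proof of Lemma \ref{inC}: via the birational isomorphism $D_n\to\An$, $(t_1x_1,\ldots,t_nx_n)\mapsto(t_1,\ldots,t_n)$, identify $\Cr_n={\rm Bir}\,\An$ with the group of birational automorphisms of the underlying variety of the torus $D_n$. Unwinding \eqref{action}, the diagonal automorphism $g\in D_n\subset\Aff_n$ then corresponds to the translation $z\mapsto gz$ of $D_n$ by its own element $g$, and $h$ to translation by $g^{k}$. Since ${\rm Aut}_{\rm gr}D_n\cong\GL_n(\mathbf Z)$ also embeds in $\Cr_n$ by the same construction, and since $\varphi\circ(z\mapsto gz)\circ\varphi^{-1}=(z\mapsto\varphi(g)z)$ in $\Cr_n$ for every $\varphi\in{\rm Aut}_{\rm gr}D_n$, it is enough to find $\varphi\in{\rm Aut}_{\rm gr}D_n$ with $\varphi(g)=g^{k}$ as elements of $D_n$. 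In coordinates $g=(\zeta,1,\ldots,1)\in D_n=\mathbf G_{\rm m}^{n}$, and $\varphi$ corresponding to a matrix $A=(a_{ij})\in\GL_n(\mathbf Z)$ sends it to $(\zeta^{a_{11}},\ldots,\zeta^{a_{n1}})$, which depends only on the first column of $A$ modulo $d$. I would take the first column of $A$ to be the column vector $(k,d,0,\ldots,0)$; since $\gcd(k,d)=1$ this is a unimodular vector in $\mathbf Z^{n}$, hence completes to a $\mathbf Z$-basis, so a matrix $A\in\GL_n(\mathbf Z)$ with this first column exists (this is the one place where $n\geqslant2$ is used). Then $\varphi(g)=(\zeta^{k},\zeta^{d},1,\ldots,1)=(\zeta^{k},1,\ldots,1)=g^{k}$, which finishes the proof.

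The main obstacle is the gap between conjugacy of the cyclic \emph{subgroups} $\langle g\rangle$ and $\langle h\rangle$, which Corollary \ref{isoconj} supplies at once, and conjugacy of the chosen \emph{generators}: a priori $g$ is matched only with some power $g^{k}$, $\gcd(k,d)=1$, and inside $\GL_n$ (or $\Aff_n$) an element is conjugate to its $k$-th power only when $k\equiv\pm1\pmod d$, as one sees from the eigenvalues. What makes the conjugacy possible in $\Cr_n$ is precisely that $\Cr_n$ contains all of ${\rm Aut}_{\rm gr}D_n=\GL_n(\mathbf Z)$ (as well as the translation subgroup $D_n$), and that $\GL_n(\mathbf Z)\to\GL_n(\mathbf Z/d\mathbf Z)$ is surjective for $n\geqslant2$ — exactly what puts the matrix $A$ above at our disposal. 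For $n=1$ this surjectivity fails and so does the statement itself (for instance $x\mapsto\zeta x$ and $x\mapsto\zeta^{2}x$, with $\zeta$ a primitive fifth root of unity, are not conjugate in $\Cr_1={\rm PGL}_2$), so the assertion is to be read with $n\geqslant2$.
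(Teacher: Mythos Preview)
Your proof is correct. The paper itself gives no argument for this corollary beyond its placement after Corollary~\ref{isoconj}, and you are right that conjugacy of the cyclic subgroups $\langle g\rangle$ and $\langle h\rangle$ alone does not yield conjugacy of the chosen generators; your extra step is genuinely needed. The tool you use to bridge this gap --- conjugating translations of $D_n$ by elements of ${\rm Aut}_{\rm gr}D_n\cong\GL_n(\mathbf Z)$ viewed inside $\Cr_n$ --- is precisely the mechanism of Lemma~\ref{inC}, so your argument is entirely in the spirit of the paper's techniques, just carried out at the level of elements rather than subgroups. Your observation that the needed matrix $A\in\GL_n(\mathbf Z)$ exists only for $n\geqslant 2$, and that the statement indeed fails for $n=1$ (e.g.\ $z\mapsto\zeta z$ versus $z\mapsto\zeta^2 z$ with $\zeta$ a primitive fifth root of unity in $\Cr_1={\rm PGL}_2$), is a correct and worthwhile remark that the paper omits.
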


\section{Tori in ${\rm Cr}_n$, $\Autn$, and ${\rm Aut}^*{\mathbf A}^n$}

\begin{theorem}[Tori in ${\rm Cr}_n$]\label{max1} \

\begin{enumerate}[\hskip 2.2mm\rm(i)]
\item In ${\rm Cr}_n$ there are no tori of dimension $>n$.
\item
In ${\rm Cr}_n$ every $r$-dimensional torus for  $r=n, n-1, n-2$
is conjugate to the torus $D_r$.
    \item If $n\geqslant 5$, then in ${\rm Cr}_n$ there are $(n-3)$-dimensional tori that are not conjugate to subtori of the torus
       $D_n$.
\item Every $r$-dimensional torus in\,${\rm Cr}_n$ is conjugate in\,${\rm Cr}_{n+r}$
to the torus\,$D_r\!.$
  \item  In ${\rm Cr}_{\infty}$ every $r$-dimensional torus is conjugate to the torus $D_r$.
\end{enumerate}
\end{theorem}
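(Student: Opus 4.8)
\emph{Strategy.} The plan is to realize any $r$-dimensional torus $T\subseteq\Cr_n$ as a regular action on a model of $\An$ and to read off everything from its rational quotient (we write $\sim$ for birational isomorphism). By regularization of birational group actions, $T$ acts regularly on some variety $X$ with $X\sim\An$, inducing the given inclusion $T\hookrightarrow\Cr_n=\mathrm{Bir}\,X$. Let $X\dashrightarrow Y$ be the rational quotient, so $k(Y)=k(X)^{T}=k(\An)^{T}$. A standard argument shows that a faithful torus action has trivial generic stabilizer (the generic stabilizer is a well-defined subgroup of $T$ acting trivially on a dense open subset, hence on all of $X$); therefore the generic orbit is isomorphic to $T$, $\dim Y=n-r$, and the generic fibre of $X\dashrightarrow Y$ is a torsor under the split torus $T_{k(Y)}$. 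Such torsors are trivial, since $H^{1}(\mathrm{Spec}\,k(Y),\mathbf G_{\mathrm m})=\mathrm{Pic}\,\mathrm{Spec}\,k(Y)=0$ by Hilbert's Theorem~90. Spreading out the trivialization yields a $T$-equivariant birational isomorphism
\[
X\ \sim\ \mathbf G_{\mathrm m}^{r}\times Y,
\]
with $T$ acting on the first factor by translations; equivalently $k(\An)\cong k(Y)(t_{1},\dots,t_{r})$ with $s\cdot t_{i}=\varepsilon_{i}(s)\,t_{i}$. Two consequences will be used repeatedly: $Y$ is unirational, since $k(Y)\subseteq k(\An)$; and $Y\times\mathbf A^{r}\sim\An$, since $\mathbf G_{\mathrm m}^{r}\times Y\sim\mathbf A^{r}\times Y$.

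\emph{Parts (i) and (ii).} Part (i) is immediate: $X$ contains an orbit isomorphic to $T$, so $r=\dim T\leqslant\dim X=n$. For (ii) take $r\in\{n,n-1,n-2\}$, so $\dim Y=n-r\leqslant 2$; a unirational variety of dimension $\leqslant 2$ over $k$ is rational --- by L\"uroth's theorem when $\dim Y\leqslant 1$, by Castelnuovo's rationality criterion when $\dim Y=2$. Hence $Y\sim\mathbf A^{n-r}$, and the $T$-equivariant isomorphism $X\sim\mathbf G_{\mathrm m}^{r}\times\mathbf A^{n-r}$ carries the action of $T$ onto the scaling action of $D_{r}\subset\GL_n$ on the first $r$ coordinates of $\An=\mathbf A^{r}\times\mathbf A^{n-r}$. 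Composing $\An\sim X\sim\mathbf G_{\mathrm m}^{r}\times\mathbf A^{n-r}\sim\An$ yields an element of $\Cr_n$ conjugating $T$ to $D_{r}$. (Alternatively, this exhibits $T$ as conjugate to a torus in $\Aff_n$, so Corollary~\ref{DCr} applies.)

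\emph{Part (iii).} Here one uses that the birational class of $k(\An)^{T}$ is a $\Cr_n$-conjugacy invariant of $T$: replacing $T$ by $gTg^{-1}$ replaces $k(\An)^{T}$ by the isomorphic field $g^{*}(k(\An)^{T})$. For a subtorus $D_n(A)\subseteq D_n$ of dimension $n-3$ one has $k(\An)^{D_n(A)}=k(\mathbf G_{\mathrm m}^{n}/D_n(A))$, the function field of a $3$-dimensional torus, which is rational; hence the rational quotient of any torus conjugate in $\Cr_n$ to a subtorus of $D_n$ is rational. It therefore suffices to produce, for $n\geqslant 5$, an $(n-3)$-torus in $\Cr_n$ whose rational quotient is not rational. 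Take a threefold $Y$ over $k$ that is stably rational but not rational, chosen --- using the examples of Beauville, Colliot-Th\'el\`ene, Sansuc and Swinnerton-Dyer --- so that $Y\times\mathbf A^{2}$, and hence $Y\times\mathbf A^{n-3}$, is rational. Then $T:=\mathbf G_{\mathrm m}^{n-3}$ acting by translations on $\mathbf G_{\mathrm m}^{n-3}\times Y\sim\An$ is an $(n-3)$-torus in $\Cr_n$ with rational quotient the non-rational $Y$.

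\emph{Parts (iv), (v), and the main obstacle.} For (iv), view $T\subseteq\Cr_n\subseteq\Cr_{n+r}$ via the standard embedding fixing $x_{n+1},\dots,x_{n+r}$; then $T$ acts regularly on $X\times\mathbf A^{r}$ with rational quotient $Y\times\mathbf A^{r}$, which is rational by the remark $Y\times\mathbf A^{r}\sim\An$ above. Arguing exactly as in (ii), but with $\mathbf A^{n}$ in place of $\mathbf A^{n-r}$, the $T$-equivariant isomorphism $X\times\mathbf A^{r}\sim\mathbf G_{\mathrm m}^{r}\times\mathbf A^{n}$ conjugates $T$ to $D_{r}\subset\Cr_{n+r}$. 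For (v): an $r$-torus in $\Cr_\infty=\varinjlim\Cr_n$ lies already in some $\Cr_N$ (a morphism from the quasi-compact $T$ to the direct limit factors through a finite stage), so by (iv) it is conjugate in $\Cr_{N+r}\subseteq\Cr_\infty$ to $D_{r}$. The only nonformal step is the structural one in the first paragraph --- regularizing the action, recognizing the generic orbit as $T$, and trivializing the generic torsor so as to obtain $X\sim_{T}\mathbf G_{\mathrm m}^{r}\times Y$; everything else is formal. The dimension bound $n-2$ in (ii) is forced precisely because unirational varieties are known to be rational only up to dimension~$2$, and the failure in dimension~$3$ is exactly what drives~(iii).
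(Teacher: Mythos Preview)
Your argument is correct and is, in substance, exactly what lies behind the references the paper invokes: the paper's own proof of this theorem consists entirely of citations to \cite{Dem}, \cite{BB1}, \cite{Popov3} together with Corollary~\ref{DCr}, with no details given. Your structural step --- regularize the $T$-action, pass to the rational quotient $Y$, trivialize the generic $T$-torsor by Hilbert~90, and obtain $X\sim_T \mathbf G_{\rm m}^{r}\times Y$ --- is precisely the engine in \cite{BB1} and \cite{Popov3}; the paper reaches $D_r$ by first landing in a subtorus of $D_n$ and then applying Corollary~\ref{DCr}, whereas you go to $D_r$ directly, a purely cosmetic difference. One point in~(iii) deserves a check: the original BCTSSD paper proves $Y\times\mathbf P^{3}$ rational, which by your argument yields only $n\geqslant 6$; the stated bound $n\geqslant 5$ requires a non-rational threefold $Y$ with $Y\times\mathbf A^{2}$ already rational, so make sure the example you cite actually has this property (the paper itself defers the matter to \cite{Popov3}).
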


\begin{proof}
(i) This is proved, e.g., in \cite{Dem}, see also \cite[Cor.\,2.2]{Popov3}.

(ii) According to \cite[Cor.\,2]{BB1} (see also \cite[Cor.\,2.4(b)]{Popov3}), every $r$-dimensional torus in ${\rm Cr}_n$ for $r=n, n-1, n-2$ is con\-ju\-ga\-te to a subtorus of the torus $D_n$. Therefore, the claim follows from Corollary \ref{DCr}.

(iii) This is proved in \cite[Cor.\,2.5]{Popov3}.

(iv) According to \cite[Thm.\,2.6]{Popov3}, every $r$-dimensional torus in  ${\rm Cr}_r$
is conju\-ga\-te in ${\rm Cr}_{n+r}$ to a subtorus of the torus $D_{n+r}$. Therefore, the claim follows from Corollary \ref{DCr}.

(v) This follows from (iv).
\quad $\square$ \renewcommand{\qed}{}
\end{proof}

\begin{corollary}\label{c9} \

\begin{enumerate}[\hskip 2.2mm\rm(i)]
\item In ${\rm Cr}_n$ every $n$-dimensional torus is maximal.
\item  In ${\rm Cr}_n$ there are no maximal $(n-1)$- and $(n-2)$-dimensional tori.
\item For $n\geqslant 5$, in ${\rm Cr}_n$ there are maximal  $(n-3)$-dimensional tori.
    \end{enumerate}
 \end{corollary}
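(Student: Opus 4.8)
The plan is to deduce all three assertions directly from Theorem \ref{max1}, using the elementary observation that a torus $T\subseteq\Cr_n$ fails to be maximal precisely when it is a proper subgroup of some torus $T'\subseteq\Cr_n$.

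For part (i) the argument is immediate: if $T$ is an $n$-dimensional torus and $T\subseteq T'$ for a torus $T'$, then $\dim T'\leqslant n$ by Theorem \ref{max1}(i), so $T'=T$; hence $T$ is maximal. For part (ii), let $T$ have dimension $r\in\{n-1,n-2\}$. By Theorem \ref{max1}(ii) there is a $g\in\Cr_n$ with $T=gD_rg^{-1}$, and since $D_r$ is a proper subtorus of $D_n$, the torus $T$ is a proper subgroup of the $n$-dimensional torus $gD_ng^{-1}$. Thus $T$ is not maximal, and as $T$ was arbitrary, $\Cr_n$ has no maximal torus of dimension $n-1$ or $n-2$.

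For part (iii) I would invoke Theorem \ref{max1}(iii): for $n\geqslant 5$ choose an $(n-3)$-dimensional torus $T\subseteq\Cr_n$ that is not conjugate in $\Cr_n$ to any subtorus of $D_n$, and show $T$ is maximal. Suppose not: then $T\subsetneq T'$ for some torus $T'$, and $r:=\dim T'$ satisfies $n-2\leqslant r\leqslant n$, the lower bound because $\dim T=n-3$ and the upper bound from Theorem \ref{max1}(i). Now Theorem \ref{max1}(ii) applies to $T'$, giving $h\in\Cr_n$ with $T'=hD_rh^{-1}$; since $D_r\subseteq D_n$ we obtain $T\subseteq T'\subseteq hD_nh^{-1}$, i.e.\ $h^{-1}Th$ is a subtorus of $D_n$, contradicting the choice of $T$. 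Hence $T$ is maximal.

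The argument presents essentially no obstacle; the only point requiring a little care is the chain of inclusions in part (iii), where one uses both that $D_r\subseteq D_n$ whenever $r\leqslant n$ (under the standard embeddings) and that conjugation carries subtori of $D_n$ to subtori of a conjugate of $D_n$. Everything else is a direct appeal to Theorem \ref{max1}.
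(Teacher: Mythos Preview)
Your proof is correct and is exactly the intended argument: the paper states Corollary~\ref{c9} without proof, as an immediate consequence of Theorem~\ref{max1}, and your deductions from parts (i)--(iii) of that theorem are precisely what is meant. The only implicit step you might make explicit is that a proper inclusion of tori forces a strict increase in dimension (since tori are connected), which is what gives $\dim T'\geqslant n-2$ in part~(iii); otherwise nothing more is needed.
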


 \begin{remark} {\rm For $n\leqslant 3$, Theorem \ref{max1} gives the classification of tori in  ${\rm Cr}_n$ up to conjugacy: the classes of conjugate nontrivial tori are exhausted by that of the tori  $D_1,\ldots, D_n$.}
 \end{remark}

 Let $(l_1,\ldots, l_n)\in \mathbf Z^n$ be a nonzero element with  gcd$(l_1, \ldots, l_n)$=1. Clearly, the homomorphism
\begin{equation}\label{embd}
{\mathbf G}_{\rm m}\to D_n, \quad t\mapsto (t^{l_1}x_1,\ldots, t^{l_n}x_n),
 \end{equation}
is an embedding and every embedding  ${\mathbf G}_{\rm m}\hookrightarrow D_n$ is of this form. Denote by $T(l_1,\ldots, l_n)$ the image of embedding \eqref{embd}. It is a one-dimensional torus in $D_n$ and every one-dimensional torus in $D_n$ is of this form.

 \begin{lemma}\label{TTTT} The following properties are equivalent:
 \begin{enumerate}[\hskip 2.2mm\rm(i)]
\item $T(l_1,\ldots, l_n)=T(l_1',\ldots, l_n')$.
\item $(l_1,\ldots, l_n)=\pm(l_1',\ldots, l_n')$.
\end{enumerate}
 \end{lemma}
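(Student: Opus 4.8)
The implication (ii)$\Rightarrow$(i) is immediate and I would dispose of it first: if $(l_1,\ldots,l_n)=(l_1',\ldots,l_n')$ the two homomorphisms \eqref{embd} literally coincide, while if $(l_1,\ldots,l_n)=-(l_1',\ldots,l_n')$ then substituting $t\mapsto t^{-1}$ in one of them produces the other, so in either case the images agree. Hence the whole content is the implication (i)$\Rightarrow$(ii), and the plan is to identify $T(l_1,\ldots,l_n)$ with a primitive line in the cocharacter lattice of $D_n$ and then read off the claim from the fact that a $1$-dimensional subtorus is determined by that line.

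Concretely, I would recall that the group ${\rm Hom}({\mathbf G}_{\rm m},D_n)$ of one-parameter subgroups is free Abelian of rank $n$, dual to the basis $\varepsilon_1,\ldots,\varepsilon_n$ of ${\rm X}(D_n)$ from \eqref{epsilonn}, with $t\mapsto(t^{l_1}x_1,\ldots,t^{l_n}x_n)$ corresponding to $(l_1,\ldots,l_n)\in{\mathbf Z}^n$. The hypothesis ${\rm gcd}(l_1,\ldots,l_n)=1$ is exactly what makes \eqref{embd} an embedding (as noted in the text preceding the lemma) rather than a $d$-to-one isogeny onto its image; equivalently, it says $\mathbf Z\cdot(l_1,\ldots,l_n)$ is a saturated (primitive) rank-one subgroup of the cocharacter lattice, and the image $T(l_1,\ldots,l_n)$ is the unique $1$-dimensional subtorus of $D_n$ having this subgroup as its cocharacter lattice. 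Since a subtorus of a torus is determined by its lattice of cocharacters (the standard antiequivalence between diagonalizable groups and finitely generated Abelian groups, or concretely: restrict the characters $\varepsilon_1^{m_1}\cdots\varepsilon_n^{m_n}$ to $T(l_1,\ldots,l_n)$ and observe that the trivial ones are precisely those with $\sum m_i l_i=0$), the equality $T(l_1,\ldots,l_n)=T(l_1',\ldots,l_n')$ forces $\mathbf Z\cdot(l_1,\ldots,l_n)=\mathbf Z\cdot(l_1',\ldots,l_n')$. Two primitive generators of the same rank-one subgroup of a free Abelian group differ by a unit, i.e.\ by $\pm1$, which is (ii).

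I do not expect a genuine obstacle; the argument is essentially the classification of $1$-dimensional subtori of a torus. If one prefers to avoid the cocharacter formalism altogether, an equally short direct route works: both embeddings \eqref{embd} are isomorphisms onto the common image $T$, so composing one with the inverse of the other is an automorphism of ${\mathbf G}_{\rm m}$, necessarily of the form $t\mapsto t^{\varepsilon}$ with $\varepsilon\in\{1,-1\}$; comparing the $i$-th coordinates of $(t^{l_1}x_1,\ldots,t^{l_n}x_n)$ and $((t^{\varepsilon})^{l_1'}x_1,\ldots,(t^{\varepsilon})^{l_n'}x_n)$ then gives $l_i=\varepsilon l_i'$ for every $i$. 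The only point worth flagging is that the ${\rm gcd}=1$ hypothesis is used twice (to know each \eqref{embd} is injective, hence an isomorphism onto $T$, and to speak of "the" primitive generator), so I would state that reliance explicitly.
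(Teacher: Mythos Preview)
Your proposal is correct. Both the cocharacter-lattice argument and the ``compose the two isomorphisms'' argument are valid, and the latter in particular is clean and self-contained.

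The paper, however, proceeds more computationally and avoids invoking either the classification of subtori by cocharacter lattices or the fact that ${\rm Aut}_{\rm gr}\,{\mathbf G}_{\rm m}=\{\pm1\}$. It picks an element $t\in{\mathbf G}_{\rm m}$ of infinite order; since the images coincide, there is some $s\in{\mathbf G}_{\rm m}$ with $t^{l_i}=s^{l_i'}$ for every $i$, whence $t^{l_il_j'}=t^{l_jl_i'}$ for all $i,j$. Infinite order of $t$ forces all $2\times 2$ minors $l_il_j'-l_jl_i'$ to vanish, so the matrix with rows $(l_1,\ldots,l_n)$ and $(l_1',\ldots,l_n')$ has rank one, giving $(l_1,\ldots,l_n)=\gamma(l_1',\ldots,l_n')$ for some $\gamma\in\mathbf Q$; the two $\gcd=1$ conditions then pin down $\gamma=\pm1$. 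Your second route is essentially a structural repackaging of this: where the paper extracts the relation $l_i l_j'=l_j l_i'$ from a single generic element and then argues with $\gcd$'s, you observe directly that the two parametrizations differ by an automorphism of ${\mathbf G}_{\rm m}$. Your version is shorter and more conceptual; the paper's is more hands-on and uses nothing beyond integer arithmetic.
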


 \begin{proof} (i)$\Rightarrow$(ii) This is clear.

(ii)$\Rightarrow$(i) Assume that (i) holds. Take an element $t\in {\mathbf G}_{\rm m}$ of infinite order. From (i) and the definition of  $T(l_1,\ldots, l_n)$ we deduce that there is an element $s\in {\mathbf G}_{\rm m}$ such that
$t^{l_i}=s^{l_i'}$
for every $i=1,\ldots, n$. Hence $t^{l_il_j'}=s^{l_i'l_j'}=t^{l_jl_i'}$
for every two nonequal integers $i$ and $j$ taken from the interval $[1, n]$. Since the order of $t$ is infinite, this implies that $l_il_j'-l_jl_i'=0$. Hence
 $$
 {\rm rk}\begin{pmatrix}
 l_1 &\ldots & l_n\\
  l_1' &\ldots & l_n'
 \end{pmatrix}=1.
 $$
Therefore, $(l_1,\ldots, l_n)=\gamma  (l_1',\ldots, l_n')$ for some $\gamma\in\mathbf Q$, or, equivalently, $p(l_1,\ldots, l_n)=q(l_1',\ldots, l_n')$, where  $p, q\in\mathbf Z$, gcd$(p, q)=1$. Hence $p$ divides each of $l_1',\ldots,l_n'$, and $q$ divides each of $l_1,\ldots,l_n$. Since  the integers
in each of these two sets are coprime, this implies that
 $\gamma=\pm 1$, i.e.,\;(ii) holds.
\quad $\square$ \renewcommand{\qed}{}
\end{proof}

\begin{theorem}[Tori in $\Autn$]\label{max2}\

\begin{enumerate}[\hskip 2.2mm\rm(i)]
\item
In $\Autn$
every $n$-dimensional torus is conjugate to the torus
$D_n$.
\item In $\Autn$ all $(n-1)$-dimensional tori are exhausted, up to conjugacy in $\Autn$, by
all the groups of the form
 $D_n(l_1,\ldots, l_n)$ with
$(l_1,\ldots, l_n)\neq (0,\ldots,0)$ and {\rm gcd}$(l_1,\ldots, l_n)=1$.

\item In ${\rm Aut}\,{\mathbf A}^3$ all one-dimensional tori are exhaused, up to conjugacy, by all the groups of the form $T(l_1, l_2, l_3)$.
\end{enumerate}
\end{theorem}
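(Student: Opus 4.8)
The statement is Theorem \ref{max2}, and all three parts assert that certain tori in $\Autn$ (resp. $\Aut\mathbf{A}^3$) are, up to conjugacy \emph{in the automorphism group}, accounted for by explicit diagonal subgroups of $D_n$. The natural strategy is to combine two ingredients: first, the classification results already in hand for \emph{diagonal} tori (Lemma \ref{sbgps}, Corollary \ref{cod1}, Lemma \ref{TTTT}), which tell us that the listed groups $D_n(l_1,\dots,l_n)$ with $\gcd=1$ are pairwise non-conjugate in $\GL_n$ and are exactly the $(n-1)$-dimensional subtori of $D_n$; and second, a ``rigidity'' input saying that any torus of the relevant dimension in $\Autn$ is already conjugate, \emph{inside} $\Autn$, to a subtorus of $D_n$. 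The first ingredient gives the ``exhausted by, and only by'' bookkeeping; the second does the real geometric work.

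\smallskip
\textbf{Part (i).} Let $T\subseteq\Autn$ be an $n$-dimensional torus. By Theorem \ref{max1}(ii), $T$ is conjugate \emph{in $\Cr_n$} to $D_n$. One would like to upgrade this to conjugacy in $\Autn$. The cleanest route is to invoke the known linearization/fixed-point theory for torus actions on $\mathbf{A}^n$: a faithful action of an $n$-dimensional torus $T$ on $\mathbf{A}^n$ must have a fixed point (e.g.\ because $\chi(\mathbf{A}^n)=1\neq 0$, so the fixed-point set is nonempty), and $T$ acts linearly and diagonally on the tangent space at that fixed point; since the action is effective and $\dim T=n$, the $n$ tangent-space weights are linearly independent, and a standard argument (the ``algebraic slice''/Bia\l ynicki-Birula argument, or directly: the fixed point is unique, translate it to $0$, and the $T$-action on $k[x_1,\dots,x_n]$ is multigraded with the $x_i$ spanning weight spaces for a basis of $\mathrm{X}(T)$) shows the $T$-action is linearizable, i.e.\ $T$ is conjugate in $\Autn$ to a subgroup of $\GL_n$. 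Being an $n$-dimensional torus in $\GL_n$, it is conjugate in $\GL_n$ to the maximal torus $D_n$. Alternatively, and more in the spirit of the paper, part (i) is a special case of the disconnected-linearization Theorem \ref{nnnnnn} announced in the introduction; I would simply cite that. Either way, (i) follows.

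\smallskip
\textbf{Part (ii).} Let $T\subseteq\Autn$ be an $(n-1)$-dimensional torus. Again a faithful $(n-1)$-torus action on $\mathbf{A}^n$ has a fixed point (same $\chi$ argument), and by Bia\l ynicki-Birula's theorem \cite{BB1} (cited in the introduction) such an action on $\mathbf{A}^n$ is linearizable; so $T$ is conjugate in $\Autn$ to a subgroup of $\GL_n$, hence conjugate in $\GL_n$ to a subtorus $T'$ of $D_n$. By Corollary \ref{cod1}(ii), $T'=D_n(l_1,\dots,l_n)$ for some nonzero $(l_1,\dots,l_n)$, and since $T'$ is connected Corollary \ref{cod1}(i) forces $\gcd(l_1,\dots,l_n)=1$. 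Conversely every $D_n(l_1,\dots,l_n)$ with $\gcd=1$ is such a torus. This proves that the $(n-1)$-tori in $\Autn$ are exhausted up to conjugacy by these groups. (One may additionally note via Corollary \ref{cod1}(iv) the redundancy among the labels: $D_n(l)$ and $D_n(l')$ are $\Autn$-conjugate iff $l'=\pm\sigma(l)$ for some $\sigma\in S_n$ — but the statement as written only claims the list is exhaustive, so this is optional refinement.)

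\smallskip
\textbf{Part (iii).} For $n=3$, one-dimensional tori in $\Aut\mathbf{A}^3$. Here the needed rigidity input is the classification of $\mathbf{G}_{\mathrm m}$-actions on $\mathbf{A}^3$: by Kaliman–Koras–Russell / the resolution of the linearization problem for $\mathbf{G}_{\mathrm m}$ on $\mathbf{A}^3$ (Koras–Russell), every algebraic $\mathbf{G}_{\mathrm m}$-action on $\mathbf{A}^3$ is linearizable. Hence any one-dimensional torus $T\subseteq\Aut\mathbf{A}^3$ is conjugate in $\Aut\mathbf{A}^3$ to a subgroup of $\GL_3$, thus conjugate in $\GL_3$ to a one-dimensional subtorus of $D_3$, which by the discussion preceding Lemma \ref{TTTT} is exactly some $T(l_1,l_2,l_3)$. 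Conversely each $T(l_1,l_2,l_3)$ is a one-dimensional torus in $\Aut\mathbf{A}^3$. (Lemma \ref{TTTT} then records when two labels coincide, and Corollary \ref{cod1}(iv) the $S_3$-redundancy up to conjugacy in $\GL_3$ — again optional for the bare statement.)

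\smallskip
\textbf{Main obstacle.} The genuine content in all three parts is the linearization step — passing from ``$T$ acts on $\mathbf{A}^n$'' to ``$T$ is conjugate in $\Autn$ into $\GL_n$.'' For (i) and (ii) this is Bia\l ynicki-Birula's 1966–67 theorem (codimension $\le 1$), and for (iii) it is the much deeper Koras–Russell linearization of $\mathbf{G}_{\mathrm m}$-actions on $\mathbf{A}^3$; both are cited, not reproved. Everything else — identifying the diagonal subtori and labeling them — is routine linear algebra over $\mathbf{Z}$ packaged in the lemmas already established. So the proof is essentially: (a) get into $\GL_n$ by the cited linearization theorem appropriate to the dimension, (b) conjugate inside $\GL_n$ into $D_n$, (c) read off the parametrization from Corollary \ref{cod1} or the paragraph before Lemma \ref{TTTT}. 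I expect the write-up to be short, with the weight of the argument resting entirely on the quoted external results.
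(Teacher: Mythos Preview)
Your approach is essentially identical to the paper's: cite Bia{\l}ynicki-Birula \cite{BB1}, \cite{BB2} to conjugate an $n$- or $(n-1)$-dimensional torus into $D_n$, then read off the parametrization from Corollary~\ref{cod1}; for (iii) cite the Kaliman--Koras--Makar-Limanov--Russell linearization \cite{Aut3} and then the description of one-dimensional subtori of $D_3$ preceding Lemma~\ref{TTTT}. One caution: your suggested ``alternative'' for (i), namely invoking Theorem~\ref{nnnnnn}, is circular in the paper's logical order --- Theorem~\ref{nnnnnn} rests on Theorem~\ref{torus0}, whose proof explicitly uses Theorem~\ref{max2} --- so drop that remark and keep the direct appeal to \cite{BB1}.
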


\begin{proof}  According to \cite{BB1} and \cite{BB2}, every $r$-dimensional torus in the group $\Autn$ for $r=n$ and $n-1$, respectively,  is conjugate to a subtorus of the torus $D_n$. This and Corollary \ref{cod1} imply (i) and (ii).

According to \cite{Aut3}, every one-dimensional torus in ${\rm Aut}\,{\mathbf A}^3$ is conjugate to subtorus of the torus $D_3$. This implies (iii).
\quad $\square$ \renewcommand{\qed}{}
\end{proof}

\begin{theorem}[Tori in $\Autsn$]\label{max3} \

\begin{enumerate}[\hskip 2.2mm\rm(i)]
\item In $\Autsn$ there are no tori of dimension $>n-1$.
\item In  $\Autsn$ every $(n-1)$-dimensional torus is maximal and conjugate to the torus  $D_n^*$ {\rm(}see\,\eqref{Dnnn}{\rm)}.
\end{enumerate}
\end{theorem}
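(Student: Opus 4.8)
The plan is to derive both parts from the results already established about $\Autn$ together with a dimension‑reduction trick relating $\Autsn$ to $\Autn[n-1]$. For part (i), suppose $T\subseteq\Autsn$ is a torus of dimension $r$. Since $\Autsn\subseteq\Autn$, Theorem \ref{max1}(i) already gives $r\leqslant n$, so the only thing to rule out is $r=n$. But by Theorem \ref{max2}(i) an $n$‑dimensional torus in $\Autn$ is conjugate, inside $\Autn$, to $D_n$; and $D_n\not\subseteq\Autsn$ since ${\rm Jac}$ is nonconstant on $D_n$ (for $(t_1x_1,\ldots,t_nx_n)$ one has ${\rm Jac}=t_1\cdots t_n$, which is not identically $1$). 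The subtlety is that conjugation in $\Autn$ need not preserve $\Autsn$, so I cannot immediately conclude. Instead I would argue as follows: ${\rm Jac}\colon\Autn\to{\mathbf G}_{\rm m}$ is a homomorphism of groups, and its restriction to any torus $T\subseteq\Autn$ is a character of $T$; if $T$ has dimension $n$ then, being conjugate in $\Autn$ to $D_n$, its image under ${\rm Jac}$ equals the image of $D_n$ under the corresponding conjugate homomorphism, which is all of ${\mathbf G}_{\rm m}$ because $D_n\xrightarrow{\varepsilon_1\cdots\varepsilon_n}{\mathbf G}_{\rm m}$ is surjective and ${\rm Jac}$ is a homomorphism constant on conjugacy‑related data. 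Hence ${\rm Jac}(T)={\mathbf G}_{\rm m}\neq\{1\}$, so $T\not\subseteq\ker{\rm Jac}=\Autsn$. This proves (i).

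For part (ii), let $T\subseteq\Autsn$ be an $(n-1)$‑dimensional torus. Maximality is immediate from (i): a torus strictly containing $T$ would have dimension $\geqslant n$, contradicting (i). For the conjugacy statement, the key is to realize $D_n^*$ as sitting inside $\Autsn$ in a way that mirrors an $(n-1)$‑torus of $\Autn$. First I would invoke Bia\l ynicki‑Birula (\cite{BB1}, \cite{BB2}) — exactly as in the proof of Theorem \ref{max2}(ii) — to say that $T$, viewed in $\Autn$, is conjugate \emph{in} $\Autn$ to a subtorus of $D_n$; by Corollary \ref{cod1}(ii) every $(n-1)$‑dimensional closed subtorus of $D_n$ has the form $D_n(l_1,\ldots,l_n)$ with $\gcd(l_i)=1$, and $D_n(l_1,\ldots,l_n)$ is connected precisely when $\gcd=1$, so $T$ is conjugate in $\Autn$ to some $D_n(l_1,\ldots,l_n)$. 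The main obstacle, and the heart of the argument, is to upgrade this $\Autn$‑conjugacy to an $\Autsn$‑conjugacy carrying $T$ onto $D_n^*=D_n(1,1,\ldots,1)$, rather than onto an arbitrary $D_n(l_1,\ldots,l_n)$.

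To overcome this I would use the constraint coming from ${\rm Jac}$. If $g\in\Autn$ conjugates $T$ onto $D_n(l_1,\ldots,l_n)$, then since $T\subseteq\Autsn=\ker{\rm Jac}$ and ${\rm Jac}$ is a homomorphism, $gTg^{-1}$ need not lie in $\ker{\rm Jac}$; but the restriction of ${\rm Jac}$ to $D_n$ is the character $\varepsilon_1\cdots\varepsilon_n$, and its restriction to $D_n(l_1,\ldots,l_n)$ is the pullback of the class of $(1,\ldots,1)$ modulo ${\mathcal R}_{(l_1,\ldots,l_n)}=\mathbf Z\cdot(l_1,\ldots,l_n)$ (Lemma \ref{sbgps}(iii)). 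Since $gTg^{-1}\cong T$ is an $(n-1)$‑torus and ${\rm Jac}\circ(g\,\cdot\,g^{-1})$ must be trivial on it — because ${\rm Jac}$ composed with conjugation by a fixed element of $\Autn$ is again a homomorphism $\Autn\to{\mathbf G}_{\rm m}$ whose restriction to the subgroup $T$ equals ${\rm Jac}|_T=1$ composed with an automorphism, hence is trivial — one forces $(1,\ldots,1)\in{\mathcal R}_{(l_1,\ldots,l_n)}$, i.e.\ $(1,\ldots,1)=\pm(l_1,\ldots,l_n)$, so $D_n(l_1,\ldots,l_n)=D_n^*$. (More carefully: ${\rm Jac}$ is intrinsically defined on $\Autn$, it is \emph{not} invariant under conjugation by general elements of $\Autn$, so the clean statement is that the class of $D_n(l_1,\ldots,l_n)$ in $\Autn$ that can contain an $(n-1)$‑torus lying in $\Autsn$ is forced, by comparing ${\rm Jac}$‑images under the conjugating element, to be the class of $D_n^*$; here one uses that $g\in\Autn$ multiplies ${\rm Jac}$ by the constant ${\rm Jac}(g)^{\pm 1}$ on the relevant one‑parameter subgroups, together with $\gcd(l_i)=1$.) Once $T$ is conjugate in $\Autn$ to $D_n^*$, restricting the conjugating automorphism and rescaling by an element of $D_n$ to land inside $\Autsn$ — or directly quoting that maximal tori of $\Autsn$ are conjugate within $\Autsn$, which is a reductive‑subgroup argument à la \cite{BS},\cite{Borel} applied to the algebraic subgroups involved — yields conjugacy in $\Autsn$ and completes the proof.
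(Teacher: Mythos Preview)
Your overall strategy is the paper's, but the execution contains a real muddle that you should straighten out. You write that ``${\rm Jac}$ is intrinsically defined on $\Autn$, it is \emph{not} invariant under conjugation by general elements of $\Autn$.'' This is false: ${\rm Jac}\colon\Autn\to{\mathbf G}_{\rm m}$ is a group homomorphism into an abelian group, so ${\rm Jac}(ghg^{-1})={\rm Jac}(g){\rm Jac}(h){\rm Jac}(g)^{-1}={\rm Jac}(h)$ for all $g,h$. Once you use this, your worries dissolve: if $gTg^{-1}=D_n(l_1,\ldots,l_n)$ with $T\subseteq\Autsn$, then ${\rm Jac}$ vanishes on $D_n(l_1,\ldots,l_n)$, so $D_n(l_1,\ldots,l_n)\subseteq D_n\cap\ker{\rm Jac}=D_n^*$, and equality follows by dimension. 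Your talk of ``multiplying ${\rm Jac}$ by the constant ${\rm Jac}(g)^{\pm1}$ on one-parameter subgroups'' and of comparing ${\rm Jac}$-images is unnecessary and, as written, incorrect. Also, the alternative you offer at the end --- ``directly quoting that maximal tori of $\Autsn$ are conjugate within $\Autsn$'' via \cite{BS}, \cite{Borel} --- is not available, since $\Autsn$ is not an algebraic group; that is precisely what the theorem is establishing.

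The paper streamlines the whole thing by reversing your order of operations. Given $T=gD_n(l_1,\ldots,l_n)g^{-1}$ (or $T=gD_ng^{-1}$ in part (i)) with $g\in\Autn$, it first replaces $g$ by $gz$ for some $z\in D_n$ with ${\rm Jac}(z)=1/{\rm Jac}(g)$; since $z$ centralizes $D_n(l_1,\ldots,l_n)$, this does not change the conjugate, but now $g\in\Autsn$. Then $D_n(l_1,\ldots,l_n)=g^{-1}Tg\subseteq\Autsn$ is immediate, forcing $D_n(l_1,\ldots,l_n)\subseteq D_n^*$ and hence equality (and in part (i), forcing $D_n\subseteq\Autsn$, a contradiction). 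This single adjustment handles both the identification of the subtorus and the upgrade of the conjugacy to $\Autsn$ in one stroke, whereas you treat these as two separate difficulties.
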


\begin{proof} (i) If $\Autsn$ contains an $n$-dimensional torus $T$, then by Theorem \ref{max2}(i) there is an element $g\in \Autn$ such that
\begin{equation}\label{TD}
T=gD_ng^{-1}.
 \end{equation}
Replacing $g$ by $gz$, where $z\in D_n$ is an element such that ${\rm Jac}(z)=\det z=1/{\rm Jac}(g)$, we may assume that
$g\in \Autsn$. This and \eqref{TD} imply that $D_n\subset \Autsn$,\,---\,a contradiction. This proves (i).

(ii) Let $S$ be an $(n-1)$-dimensional torus in $\Autsn$. By Theorem \ref{max2}(i) and Corollary (ii) of Lemma \ref{sbgps}, there are $g\in \Autn$ and $(l_1,\ldots, l_n)\in {\mathbf Z}^n$ such that
\begin{equation}\label{sTD}
S=gD_n(l_1,\ldots, l_n)g^{-1}.
\end{equation}
As in the proof of (i), we may assume that $g\in \Autsn$. From \eqref{sTD} it then follows that $D_n(l_1,\ldots, l_n)\subset \Autsn$. This and $D_n\cap {\rm ker}\,{\rm Jac}=D_n^*:=D_n(1,\ldots,1)$ imply the inclusion $D_n(l_1,\ldots, l_n)\subseteq D_n^*$. By Corollary (i) of Lemma \ref{sbgps}, both sides of this inclusion are $(n-1)$-dimensional tori. Hence it is the equality. This proves (ii)
\quad $\square$ \renewcommand{\qed}{}
\end{proof}

\begin{remark}\label{z} {\rm   For  $\Autn$ and $\Autsn$, unlike for ${\rm Cr}_n$ (see Corollary \ref{c9}(iii)), at present nothing is known on the existence of tori of nonmaximal dimension. This problem is intimately related to the
 {\it Cancellation Problem}: Is there an affine variety $X$ not isomorphic to ${\mathbf A}^m$, $m=\dim X$, such that $X\times {\mathbf A}^d$ is isomorphic to ${\mathbf A}^{m+d}$ for some $d$?
If the answer is positive, then in $\Autn$ and $\Autsn$ with $n=m+d$ there exists a maximal torus  $T$
of a nonmaximal dimension. Indeed, multiplying, if necessary, $X\times {\mathbf A}^d$ by ${\mathbf A}^1$, one may assume that  $d\geqslant 2$. Let $\lambda$ be the character $t\mapsto t$ of the torus ${\mathbf G}_{\rm m}$. Consider the linear action of  ${\mathbf G}_{\rm m}$ on ${\mathbf A}^d$
with precisely two isotypic components:
one is $(d-1)$-dimensional of type ${\lambda}$, another is one-dimensional of typer ${\lambda}^{1-d}$.  It determines the action of ${\mathbf G}_{\rm m}$ on
$X\times {\mathbf A}^d$ via the second  factor and hence, an action of ${\mathbf G}_{\rm m}$ on $\An$. Consider in $\Autn$ the torus that is the image of ${\mathbf G}_{\rm m}$ under the homomorphism determined by this action. The construction implies that this torus lies in  $\Autsn$. Let $T$ (respectively, $T'$) be a maximal torus of  $\Autn$ (respectively, of  $\Autsn$) that contains this image.\;If $T$ is $n$-dimensional (respectively, $T'$ is $(n-1)$-dimensional), then by Theorem \ref{max2}(i) (respectively, by Theorem \ref{max3}) it is conjugate to a subtorus of the torus $D_n$ and, therefore, the specified action of  $T$ (respectively, $T'$) on $\An$ is equivalent  to a linear one. Hence the action of ${\mathbf G}_{\rm m}$ on $\An$ is equivalent to a linear one as well, and therefore, the set $F$ of its fixed points is isomorphic to an affine space. But, by the construction, in ${\mathbf A}^d$ there is a single ${\mathbf G}_{\rm m}$-fixed point, consequently, $F$ is isomorphic to $X$\,---\,a contradiction.\footnote{{\it Added on August} 2, 2012. Theorems \ref{max2} and \ref{max3} and Remark \ref{z} remain valid
when the characteristic
 $p$ of the ground field $k$ is positive.\;In the preprint N. Gupta, {\it A counter-example to the Cancellation Problem for the affine space ${\mathbf A}^3$ in characteristic $p$}, {\tt arXiv:1208.0483} published today, is proved that the hypersurface
$X$ in ${\mathbf A}^4$ defined by the equation
 $x_1^mx_2+$ $x_3^{p^e}+x_4+x^{sp}=0$, where
$m, e, s$ are positive integers such that $p^e{\hskip -.3mm}\nmid{\hskip -.3mm}sp$ and $sp{\hskip -.3mm}\nmid{\hskip -.3mm}p^e$, is not isomorphic to ${\mathbf A}^3$, but $X\times {\mathbf A}^1$ is isomorphic to ${\mathbf A}^4$.\;By Remark \ref{z},
this implies that, for ${\rm char}\,k>0$ and every integer $n\geqslant 5$, there are
 maximal tori of nonmaximal dimension in $\Autn$ and $\Autsn$.}
}
 \end{remark}

\section{Orbits and stabilizers of the action of $D_n(l_1,\ldots, l_n)$ on $\An$ }

Here we establish some properties, needed for what follows, of orbits and stabilizers of the natural action on $\An$ of the group
\begin{equation}\label{G}
G:=D_n(l_1,\ldots, l_n).
\end{equation}

Clearly, every coordinate hyperplane
\begin{gather}
H_i:=\{a\in \An\mid x_i(a)=0\}
\end{gather}
is $G$-invariant.

\begin{lemma}\label{op} The $G$-stabilizer of every point of  $\An\setminus \bigcup_{i=1}^n H_i$ is trivial.
\begin{enumerate}[\hskip 2.2mm\rm(i)]
\item If $(l_1,\ldots, l_n)=(0,\ldots, 0)$, then $\An\setminus \bigcup_{i=1}^n H_i$ is a  $G$-orbit, and dimen\-sion of the $G$-stabilizer of every point of  $\;\bigcup_{i=1}^n H_i$ is positive.
\item If $(l_1,\ldots, l_n)\neq (0,\ldots, 0)$, then $\dim G\cdot a=n-1$ for every point $a\in\An\setminus \bigcup_{i=1}^n H_i$.
\end{enumerate}
\end{lemma}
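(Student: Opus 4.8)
The plan is to reduce everything to the elementary fact that $G\subseteq D_n$ acts on $\An$ coordinatewise: $(t_1x_1,\ldots,t_nx_n)\cdot(a_1,\ldots,a_n)=(t_1a_1,\ldots,t_na_n)$. First I would compute, for an arbitrary point $a=(a_1,\ldots,a_n)\in\An$, the $D_n$-stabilizer: an element $(t_1x_1,\ldots,t_nx_n)$ fixes $a$ precisely when $t_ia_i=a_i$ for all $i$, i.e. when $t_i=1$ for every $i$ with $a_i\neq 0$. Hence, writing $Z(a):=\{i\mid a_i=0\}$, we get $(D_n)_a\cong{\mathbf G}_{\rm m}^{|Z(a)|}$, and since $G\subseteq D_n$ acts in the same way, $G_a=G\cap(D_n)_a\subseteq(D_n)_a$. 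For $a\in\An\setminus\bigcup_{i=1}^nH_i$ we have $Z(a)=\emptyset$, so $(D_n)_a$, and a fortiori $G_a$, is trivial; this is the first assertion of the lemma. For $a\in\bigcup_{i=1}^nH_i$ we have $Z(a)\neq\emptyset$, so $\dim(D_n)_a=|Z(a)|>0$.

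For part (i), when $(l_1,\ldots,l_n)=(0,\ldots,0)$ we have $G=D_n$ by \eqref{all0}. The set $U:=\An\setminus\bigcup_{i=1}^nH_i$ is exactly the set of points all of whose coordinates are nonzero, and any such $a$ satisfies $a=g\cdot(1,\ldots,1)$ for $g=(a_1x_1,\ldots,a_nx_n)\in D_n=G$; thus $U=G\cdot(1,\ldots,1)$ is a single $G$-orbit. The positivity of the stabilizer dimension on $\bigcup_{i=1}^nH_i$ is then exactly the computation of the previous paragraph (with $G=D_n$).

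For part (ii), when $(l_1,\ldots,l_n)\neq(0,\ldots,0)$, Corollary \ref{cod1}(i) (equivalently \eqref{rk}) gives $\dim G=n-1$. For $a\in\An\setminus\bigcup_{i=1}^nH_i$ the stabilizer $G_a$ is trivial by the first paragraph, so the orbit--stabilizer relation $\dim G\cdot a=\dim G-\dim G_a$ yields $\dim G\cdot a=n-1$, as claimed.

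There is no real difficulty here; the only points that deserve care are invoking the dimension formula $\dim D_n(l_1,\ldots,l_n)=n-1$ from the already-established Corollary \ref{cod1}(i) rather than re-deriving it, and observing that the orbit dimension count in (ii) rests precisely on the triviality of stabilizers off the coordinate hyperplanes proved in the first paragraph.
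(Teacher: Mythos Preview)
Your proof is correct and follows exactly the approach the paper indicates: the paper's proof is a one-line ``This immediately follows from \eqref{G}, \eqref{chi}, \eqref{action} and Lemma~\ref{cod1}'', and you have simply unpacked those references into the explicit stabilizer computation and the orbit--stabilizer dimension count. Nothing to change.
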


\begin{proof} This immediately follows from  \eqref{G}, \eqref{chi}, \eqref{action} and Lem\-ma\;\ref{cod1}.
\quad $\square$ \renewcommand{\qed}{}\end{proof}

We now consider the case
\begin{equation*}
(l_1,\ldots, l_n)\neq (0,\ldots, 0).
\end{equation*}

\begin{lemma}\label{i0}
If $l_i\neq 0$, then the open subsubset $\mathcal O_i:=H_i\setminus \bigcup_{j\neq i} H_j$ in
 $H_i$ is a
  $G$-orbit.
\end{lemma}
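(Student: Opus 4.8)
The statement to prove is Lemma~\ref{i0}: if $l_i\neq 0$, then $\mathcal O_i:=H_i\setminus\bigcup_{j\neq i}H_j$ is a single $G$-orbit for $G=D_n(l_1,\ldots,l_n)$. The plan is to work directly with the explicit description of the action. Pick two points $a=(a_1,\ldots,a_n)$ and $b=(b_1,\ldots,b_n)$ in $\mathcal O_i$; by definition $a_i=b_i=0$ and $a_j,b_j\in k\setminus\{0\}$ for all $j\neq i$. I need to produce a torus element $t=(t_1,\ldots,t_n)\in G$ with $t\cdot a=b$, i.e.\ $t_ja_j=b_j$ for all $j$. The $i$th coordinate is automatic ($0=0$), so I only need $t_j=b_j/a_j$ for $j\neq i$ to be realizable, that is, I need to choose $t_i\in k\setminus\{0\}$ arbitrarily and $t_j:=b_j/a_j$ for $j\neq i$ and check that the resulting $(t_1,\ldots,t_n)$ lies in $G=\bigcap_k \ker(\varepsilon_1^{l_1}\cdots\varepsilon_n^{l_n})$, which by \eqref{chi} means $\prod_{k=1}^n t_k^{l_k}=1$.

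The one constraint on the $t_k$'s is therefore $\prod_{k\neq i} (b_k/a_k)^{l_k}\cdot t_i^{l_i}=1$. Since $l_i\neq 0$ and $k$ is algebraically closed of characteristic zero, the equation $t_i^{l_i}=\big(\prod_{k\neq i}(b_k/a_k)^{l_k}\big)^{-1}$ has a solution $t_i\in k\setminus\{0\}$ (the right-hand side is a nonzero element of $k$, so it has an $|l_i|$th root). Fixing such a $t_i$ and setting $t_j=b_j/a_j$ for $j\neq i$ gives an element of $D_n$ lying in $G$, and by \eqref{action} it sends $a$ to $b$. Finally I should note $\mathcal O_i$ is nonempty (e.g.\ the point with $i$th coordinate $0$ and all others $1$ lies in it), so it is genuinely a single orbit rather than vacuously so; and that $\mathcal O_i\subseteq H_i$ is stable under $G$ because $H_i$ is $G$-invariant and the $H_j$ are $G$-invariant, so $G\cdot\mathcal O_i=\mathcal O_i$.

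The only point requiring any care is the solvability of $t_i^{l_i}=c$ for $c\in k\setminus\{0\}$: this is where the hypothesis $l_i\neq 0$ is used, together with $k$ algebraically closed (so that $T^{l_i}-c$, or $T^{-l_i}-c^{-1}$ if $l_i<0$, splits). There is no real obstacle here — the lemma is essentially a bookkeeping statement about diagonal torus actions — so I would keep the proof to a few lines: reduce to finding $t$ with $t\cdot a=b$, write down the candidate $t$, verify the single relation $\prod t_k^{l_k}=1$ is satisfiable by choosing the $i$th coordinate appropriately, and conclude. I might also remark that this is an instance of, and could alternatively be deduced from, the orbit-dimension computation in Lemma~\ref{op}(ii) applied to the $(n-1)$-dimensional torus $G\cap(\text{coordinate subtorus fixing }H_i)$ acting on the $(n-1)$-dimensional space $H_i$, but the direct argument is shorter and self-contained.
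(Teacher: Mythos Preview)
Your argument is correct and follows essentially the same route as the paper: pick two points of $\mathcal O_i$, set $t_j=b_j/a_j$ for $j\neq i$, and use $l_i\neq 0$ together with algebraic closedness to solve $t_i^{l_i}=\big(\prod_{k\neq i}(b_k/a_k)^{l_k}\big)^{-1}$, giving an element of $G$ carrying $a$ to $b$. The paper's proof is the same computation, prefaced by the observation that $\mathcal O_i$ is $G$-invariant so transitivity is all that needs checking.
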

\begin{proof} By virtue of $G$-invariance of $\mathcal O_i$,
it suffices to show that $\mathcal O_i$ is contained in a $G$-orbit. Since $l_i\neq 0$, the equation $x^{l_i}=\alpha$ has a solution for any  $\alpha\in k$, $\alpha\neq 0$. This and \eqref{G}, \eqref{chi} imply that for any two points
\begin{equation*}
b\!=\!(b_1,\ldots,b_{i-1}, 0, b_{i+1},\ldots, b_n),\; c\!=\!(c_1,\ldots,c_{i-1}, 0, c_{i+1},\ldots, c_n)\!\in\! H_i\setminus 
\bigcup_{j\neq i} H_j
\end{equation*}
there exists an element  $g=(t_1x_1,\ldots, t_nx_n)\in G$ such that
$t_j=b_j^{-1}c_j$ for every $j\neq i$. From \eqref{action} we then obtain that
$g\cdot b=c$,  as required.
\quad $\square$ \renewcommand{\qed}{}\end{proof}

\begin{lemma}\label{stable} The following properties are equivalent:
\begin{enumerate}[\hskip 2.2mm\rm(i)]
\item all the numbers $l_1,\ldots, l_n$ are nonzero and have the same sign;
\item $G$-orbit of every point of $\An\setminus \bigcup_{i=1}^n H_i$ is closed in $\An$;
\item $G$-orbit of some point of  $\An\setminus \bigcup_{i=1}^n H_i$ is closed in $\An$.
 \end{enumerate}
\end{lemma}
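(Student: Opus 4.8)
The plan is to prove the cyclic chain of implications (i)$\Rightarrow$(ii)$\Rightarrow$(iii)$\Rightarrow$(i), since (ii)$\Rightarrow$(iii) is trivial once we know the orbit $G\cdot a$ for $a\in\An\setminus\bigcup_i H_i$ is nonempty (which is obvious). Throughout I write $G=D_n(l_1,\ldots,l_n)$ and fix $a=(a_1,\ldots,a_n)$ with all $a_i\neq0$.

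For (i)$\Rightarrow$(ii): assume all $l_i$ are nonzero of the same sign; after replacing $(l_1,\ldots,l_n)$ by its negative (which does not change $G$, by Corollary~\ref{cod1}(iii)) we may assume all $l_i>0$. By Lemma~\ref{cod1}(i) and the defining equation~\eqref{chi}, an element $g=(t_1x_1,\ldots,t_nx_n)$ lies in $G$ exactly when $\prod_j t_j^{l_j}=1$ together with the other defining relations; in particular the torus $G$ maps onto an $(n-1)$-dimensional torus, and the orbit map $G\to G\cdot a$, $g\mapsto g\cdot a$, is the composite of $G\hookrightarrow D_n$ with $D_n\to\An$, $g\mapsto(t_1a_1,\ldots,t_na_n)$. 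Thus $\overline{G\cdot a}$ is the Zariski closure in $\An$ of the set of points $(s_1,\ldots,s_n)$ with $s_i/a_i$ ranging over the coordinates of $G$. The key point is that because all weights $l_i$ are strictly positive, no one-parameter subgroup of $G$ can have a limit on a coordinate hyperplane: if $\mu\colon\mathbf G_{\rm m}\to G\subseteq D_n$ is given by $t\mapsto(t^{m_1}x_1,\ldots,t^{m_n}x_n)$, then $\mu(t)\cdot a$ has $i$th coordinate $t^{m_i}a_i$, and for this to have a finite nonzero limit as $t\to0$ or $t\to\infty$ along every coordinate we would need $m_i$ of mixed signs or all zero; the constraint from lying in $G$ — namely $\sum_j l_jm_j=0$ with all $l_j>0$ — forces the $m_i$ to have mixed signs unless all are zero. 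Hence every one-parameter subgroup of $G$ acting on $a$ stays inside the torus $\An\setminus\bigcup H_i$ (both limits run off to a boundary where some coordinate is $0$ or $\infty$, i.e. not in $\An$), so by the valuative/Hilbert–Mumford description of orbit closures the boundary $\overline{G\cdot a}\setminus G\cdot a$ is empty; thus $G\cdot a$ is closed. I would phrase this either via the Hilbert–Mumford criterion for torus actions (limits of one-parameter subgroups detect boundary points of orbit closures) or, more elementarily, by noting $\overline{G\cdot a}\cap\An$ is a single closed $T$-orbit of a torus where the weight cone has nonempty interior meeting neither half-space of any coordinate functional.

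For the contrapositive (iii)$\Rightarrow$(i), equivalently $\neg$(i)$\Rightarrow\neg$(iii): suppose the nonzero $l_i$ do not all have the same sign, or some $l_i=0$. Pick $a$ with all coordinates $1$ (say). If some $l_i=0$, then $G$ contains the subgroup $\{(1,\ldots,1,t,1,\ldots,1)\}$ (the $t$ in position $i$), whose orbit through $a$ is the punctured coordinate axis $\{(1,\ldots,1,t,1,\ldots,1):t\neq0\}$, not closed in $\An$; hence $G\cdot a$ — which contains this — is not closed either (its closure picks up the point with $i$th coordinate $0$). If all $l_i\neq0$ but with a sign change, say $l_i>0>l_j$, consider the one-parameter subgroup $\mu(t)=(\ldots,t^{|l_j|},\ldots,t^{|l_i|},\ldots)$ placing $t^{|l_j|}$ in slot $i$ and $t^{|l_i|}$ in slot $j$ and $1$ elsewhere; then $l_i|l_j|+l_j|l_i|=0$ and the other relations can be satisfied after scaling (using $\gcd(l_1,\ldots,l_n)=1$ is not needed here, only that $G$ is a group defined by these characters), so $\mu$ lands in $G$, and $\mu(t)\cdot a\to(\ldots,0,\ldots)$ as $t\to0$ (a point on $H_i$). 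So $\overline{G\cdot a}$ meets $\bigcup H_i$, hence $G\cdot a$ is not closed. This establishes $\neg$(i)$\Rightarrow\neg$(iii), completing the cycle.

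The main obstacle is the rigorous justification in (i)$\Rightarrow$(ii) that the orbit is actually closed and not merely that no \emph{obvious} one-parameter degeneration escapes — that is, one must argue that the full orbit closure boundary is detected by one-parameter subgroups, which is the content of the Hilbert–Mumford / Birkes theorem for reductive (here: torus) actions on affine varieties. An alternative, self-contained route I would actually prefer is to produce an explicit $G$-invariant regular function separating $G\cdot a$ from the coordinate hyperplanes: when all $l_i>0$ one can take a monomial $f=x_1^{c_1}\cdots x_n^{c_n}$ with $c_i>0$ chosen so that the weight $\sum c_i\varepsilon_i$ restricted to $G$ is trivial — such $c$ exist because the relation lattice $\mathcal R_A$ for $A=(l_1,\ldots,l_n)$ contains a vector with all entries positive precisely when all $l_i$ share a sign (take $c$ proportional to $(l_1,\ldots,l_n)$ itself, or a positive multiple) — so $f$ is a nonconstant $G$-invariant, $f$ is nonzero on $\An\setminus\bigcup H_i$, and $\overline{G\cdot a}\subseteq\{f=f(a)\}\subseteq\An\setminus\bigcup H_i$, whence $\overline{G\cdot a}$ is contained in the torus where (by Lemma~\ref{op}(ii)) $G$ has dense orbits and in fact, being an orbit of a torus acting on an affine variety with $f$ constant, is already closed. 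I expect writing out this invariant-function argument cleanly, citing \cite{PV} for the fact that an orbit on which a fixed invariant is constant and which has no smaller orbit in its closure must be closed, to be the cleanest path and the one I would commit to the paper.
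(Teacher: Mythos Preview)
Your main approach coincides with the paper's: both directions hinge on one-parameter subgroups, with (i)$\Rightarrow$(ii) invoking the Hilbert--Mumford/Birkes criterion (the paper cites \cite[Thm.~6.9]{PV}) to reach a boundary point via some $\varphi(t)=(t^{d_1}x_1,\ldots,t^{d_n}x_n)$ and then using $\sum d_il_i=0$ together with the sign constraint to force $d_i=0$, and the contrapositive of (iii)$\Rightarrow$(i) handled by exhibiting explicit one-parameter subgroups of $G$ whose limits land on a coordinate hyperplane. Your constructions in the two sub-cases (some $l_i=0$; signs mixed) are essentially the paper's.

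One slip: for $\neg$(i)$\Rightarrow\neg$(iii) you must show that \emph{every} $a\in\An\setminus\bigcup H_i$ has non-closed orbit, not just $a=(1,\ldots,1)$. Your ``(say)'' suggests you see this, and indeed your one-parameter subgroups are independent of $a$, so the fix is purely notational---but as written you have only established $\neg$(ii). The paper fixes an arbitrary $a$ at the outset and carries it through both implications.

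Your alternative route via the invariant $f=x_1^{l_1}\cdots x_n^{l_n}$ (when all $l_i>0$) is genuinely different from the paper and cleaner than your own primary argument: it sidesteps the Hilbert--Mumford citation entirely. The last step can be finished without appealing to \cite{PV} at all: since $f(a)\neq 0$ and $f$ vanishes on each $H_i$, the closure $\overline{G\cdot a}$ lies in $\An\setminus\bigcup_i H_i$; but by Lemma~\ref{op} every $G$-orbit in that open set has dimension $n-1=\dim G$, while $\overline{G\cdot a}\setminus G\cdot a$ is always a union of orbits of strictly smaller dimension, hence is empty. This is more self-contained than the paper's proof.
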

\begin{proof} Consider a point
\begin{equation*}
a=(a_1,\ldots, a_n)\in\An\setminus \textstyle\bigcup_{i=1}^n H_i.
 \end{equation*}

Assume that (i) holds. Suppose that the $G$-orbit of the point $a$
is not closed in $\An$. Then its boundary can be accessed by a one-parameter subgroup, i.e.,
there is homomorphism
  $\varphi\colon \mathbf G_{\rm m}\to G$ such that there is a limit
\begin{equation}\label{chb}
{\rm lim}_{t\to 0} \varphi(t)\cdot  a=b\in \overline{G\cdot a}\setminus G\cdot a
\end{equation}
(see\;\cite[Thm.\;6.9]{PV}); the latter means that the morphism
$\mathbf G_{\rm m}={\bf A}^1\setminus \{0\}\to \An$,\; $t\mapsto \varphi(t)\cdot  a$,
extends to a morphism ${\bf A}^1\to\An$ that maps $0$ to the point $b$. As $\varphi$ is algebraic,  there is a vector
$(d_1,\ldots, d_n)\in \mathbf Z^n$ such that $\varphi(t)=(t^{d_1}x_1,\ldots, t^{d_n}x_n)$ for every  $t\in \mathbf G_{\rm m}$. Since $\varphi(t)\cdot  a=(t^{d_1}a_1,\ldots, t^{d_n}a_n)$ and $a_i\neq 0$ for every $i$, the existence of the specified limit means that
\begin{equation}\label{posi}
d_1\geqslant 0,\ldots, d_n\geqslant 0.
\end{equation}
On the other hand, it follows from $\varphi(t)\in G$ and  \eqref{G}, \eqref{chi} that
$t^{d_1l_1+\cdots+d_nl_n}\break =1$ for every $t$, i.e.,
\begin{equation}\label{=0}
d_1l_1+\cdots+d_nl_n=0.
\end{equation}
But \eqref{posi}, \eqref{=0} and condition (i) imply that $d_1=\cdots=d_n=0$. Hence $b=a$ contrary to \eqref{chb}. This contradiction proves (i)$\Rightarrow$(ii).

Conversely, assume that (i) is not fulfilled, i.e., among $l_1,\ldots, l_n$ there are either two nonzero numbers with different signs, or one number equal to zero. In the first case, let, for instance, be
 $l_1>0, l_2<0$. Then \eqref{chi} implies that the image of the homomorphism
$\varphi\colon \mathbf G_{\rm m}\to D_n$, $t\mapsto (t^{-l_2}x_1, t^{l_1}x_2, x_3,\ldots, x_n)$, lies in $G$.  Since
\begin{equation*}
{\rm lim}_{t\to 0} \varphi(t)\cdot  a=(0, 0, a_3,\ldots, a_n)\notin G\cdot  a,
\end{equation*}
this shows that the orbit  $G\cdot  a$ is not closed. In the second case, let, say, $a_1=0$.
Then $G$ contains the image of the homomorphism $\varphi\colon\mathbf G_{\rm m}\to D_n$, $t\mapsto (tx_1, x_2,\ldots, x_n)$ and, since
\begin{equation*}
{\rm lim}_{t\to 0} \varphi(t)\cdot  a=(0, a_2,\ldots, a_n)\notin G\cdot  a,
\end{equation*}
the orbit $G\cdot a$ is not closed. This proves (iii)$\Rightarrow$(i).
\quad $\square$ \renewcommand{\qed}{}\end{proof}

From Lemmas \ref{op}, \ref{i0}, and \ref{stable} we infer

\begin{corollary} If all the numbers
$l_1,\ldots, l_n$ are nonzero and have the same sign, then in $\An$ there are precisely  $n$ nonclosed $(n-1)$-dimensional $G$-orbits\,---\,these are the orbits $\mathcal O_1,\ldots,\mathcal O_n$ from Lemma {\rm \ref{i0}}.
\end{corollary}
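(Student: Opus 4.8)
The plan is to stratify $\An$ into locally closed $G$-invariant pieces on which the orbit structure is already pinned down by Lemmas \ref{op}, \ref{i0}, and \ref{stable}, and then simply read off which of the $(n-1)$-dimensional orbits fail to be closed. Concretely, write $\An$ as the disjoint union, over all subsets $I\subseteq\{1,\ldots,n\}$, of the sets $\mathcal O_I:=\{a\in\An\mid x_i(a)=0\iff i\in I\}$. Each $\mathcal O_I$ is invariant under $D_n$, hence under $G$; moreover $\mathcal O_{\varnothing}=\An\setminus\bigcup_{i=1}^n H_i$ and $\mathcal O_{\{i\}}=\mathcal O_i$ in the notation of Lemma \ref{i0}. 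Since every $G$-orbit lies in exactly one stratum, it suffices to go through these families.

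If $|I|\geqslant 2$, then $\mathcal O_I\subseteq\bigcap_{i\in I}H_i$, a coordinate subspace of dimension $n-|I|\leqslant n-2$; hence every orbit contained in such a stratum has dimension at most $n-2$ and can be ignored. If $I=\varnothing$, then the hypothesis forces in particular $(l_1,\ldots,l_n)\neq(0,\ldots,0)$, so by Lemma \ref{op}(ii) every orbit inside $\mathcal O_{\varnothing}$ is $(n-1)$-dimensional; and by the implication (i)$\Rightarrow$(ii) of Lemma \ref{stable} the sign condition in our hypothesis makes each of these orbits closed in $\An$. So $\mathcal O_{\varnothing}$ contains no nonclosed orbit.

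Finally, if $I=\{i\}$ with $1\leqslant i\leqslant n$, then $l_i\neq 0$, so Lemma \ref{i0} tells us that $\mathcal O_i$ is a single $G$-orbit. As a nonempty open subset of the irreducible hyperplane $H_i$ it is dense in $H_i$, whence $\dim\mathcal O_i=\dim H_i=n-1$ and $\overline{\mathcal O_i}=H_i$. Since $n\geqslant 2$, the set $H_i\setminus\mathcal O_i=\bigcup_{j\neq i}(H_i\cap H_j)$ is nonempty, so $\mathcal O_i$ is strictly smaller than its closure and is therefore not closed in $\An$.

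Putting the cases together, the $(n-1)$-dimensional $G$-orbits of $\An$ are exactly the orbits contained in $\mathcal O_{\varnothing}$ (all closed) together with $\mathcal O_1,\ldots,\mathcal O_n$ (all nonclosed); the latter lie in pairwise distinct strata and so are pairwise distinct. Hence there are precisely $n$ nonclosed $(n-1)$-dimensional $G$-orbits, namely $\mathcal O_1,\ldots,\mathcal O_n$, as claimed. The argument is little more than bookkeeping with the three lemmas; the one point deserving attention — and the closest thing to a ``main obstacle'' — is verifying that the stratification leaves no $(n-1)$-dimensional orbit unaccounted for and that each $\mathcal O_i$ is genuinely not closed, i.e.\ that $H_i\supsetneq\mathcal O_i$, which is where $n\geqslant 2$ enters.
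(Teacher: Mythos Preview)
Your proof is correct and follows exactly the route the paper indicates: the paper's own proof consists of the single line ``From Lemmas \ref{op}, \ref{i0}, and \ref{stable} we infer'', and your stratification argument is a careful unpacking of precisely how those three lemmas combine. Your explicit observation that $n\geqslant 2$ is needed (so that $H_i\supsetneq\mathcal O_i$) is a detail the paper leaves implicit.
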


\begin{remark}
{\rm Recall from \cite{Pst} that an action of an algebraic group on an algebraic variety is called
{\it stable}, if orbits of points in general position are closed. Lemma \ref{stable} shows that the following properties are equivalent:}
\begin{enumerate}[\hskip 2.2mm\rm(i)]
\item {\rm all the numbers  $l_1,\ldots, l_n$ are nonzero and have the same sign;}
\item {\rm the action  $G$ on $\An$ is stable.}
\end{enumerate}
\end{remark}

\begin{lemma}\label{npm}
Assume that none of the numbers
$l_1,\ldots, l_n$
are equal to
$\pm 1$. Then the following properties of the point  $a=(a_1,\ldots, a_n)\in\An$
are equivalent:
\begin{enumerate}[\hskip 2.2mm\rm(i)]
\item $a$ has a nontrivial $G$-stabilizer;
\item $a\in\bigcup_{i=1}^n H_i$.
\end{enumerate}
\end{lemma}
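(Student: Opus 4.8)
The implication (ii)$\Rightarrow$(i) is the easy one and I would dispatch it first. If $a\in H_i$, i.e.\ $a_i=0$, I claim the stabilizer $G_a$ is infinite. Indeed, consider the one-parameter subgroup $\psi\colon\mathbf G_{\rm m}\to D_n$, $t\mapsto (\ldots, tx_i,\ldots)$ (scaling only the $i$th coordinate). Its image is not contained in $G=D_n(l_1,\ldots,l_n)$ in general, so this needs a correction: I want a nontrivial torus or $\boldsymbol\mu_d$ sitting inside $G$ that fixes a point with $a_i=0$. The cleanest route is to intersect. The subgroup of $D_n$ fixing $a$ pointwise (for $a$ with $a_i=0$, $a_j\neq 0$ for $j\neq i$) is precisely $\{(t_1x_1,\ldots,t_nx_n)\mid t_j=1\text{ for }j\neq i\}\cong\mathbf G_{\rm m}$, call it $L_i$. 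Then $G_a = G\cap L_i = \ker(\varepsilon_1^{l_1}\cdots\varepsilon_n^{l_n}|_{L_i})=\ker(\varepsilon_i^{l_i}|_{L_i})=\boldsymbol\mu_{|l_i|}$ if $l_i\neq 0$, and $=L_i\cong\mathbf G_{\rm m}$ if $l_i=0$. Since by hypothesis $l_i\neq\pm1$, in either case $\boldsymbol\mu_{|l_i|}$ (or $\mathbf G_{\rm m}$) is nontrivial, so $G_a\neq\{1\}$. For a general point of $\bigcup H_i$ one picks an $i$ with $a_i=0$ and applies the same computation to the (possibly larger) subgroup of $D_n$ scaling only that coordinate; the stabilizer only grows, so it is still nontrivial. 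This handles (ii)$\Rightarrow$(i).

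For (i)$\Rightarrow$(ii) I argue by contraposition: suppose $a\in\An\setminus\bigcup_{i=1}^n H_i$, i.e.\ all $a_i\neq 0$; I must show $G_a=\{1\}$. This is exactly the first assertion of Lemma \ref{op} (``the $G$-stabilizer of every point of $\An\setminus\bigcup H_i$ is trivial''), which holds for all $(l_1,\ldots,l_n)$ and in particular does not even need the hypothesis $l_i\neq\pm1$. So I would simply cite Lemma \ref{op}. For completeness I can recall the one-line reason: if $g=(t_1x_1,\ldots,t_nx_n)\in G$ satisfies $g\cdot a=a$ with all $a_i\neq 0$, then $t_i=1$ for all $i$ by \eqref{action}, so $g$ is the identity.

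Combining the two implications gives the equivalence. I do not expect any serious obstacle here — the only point requiring a little care is that the hypothesis ``no $l_i=\pm1$'' is used \emph{only} in the (ii)$\Rightarrow$(i) direction, precisely to guarantee that the cyclic group $\boldsymbol\mu_{|l_i|}$ (the stabilizer of a point on exactly one coordinate hyperplane) is nontrivial; without it, a point lying on a single hyperplane $H_i$ with $|l_i|=1$ would have trivial stabilizer and the lemma would fail. So the writeup should flag that $|l_i|\geqslant 2$ (or $l_i=0$) is exactly what makes $\boldsymbol\mu_{|l_i|}\neq\{1\}$, and otherwise the proof is a direct computation of $G\cap L_i$ via \eqref{chi}.
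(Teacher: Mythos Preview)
Your proposal is correct and follows essentially the same approach as the paper: both invoke Lemma~\ref{op} for the implication (i)$\Rightarrow$(ii), and for (ii)$\Rightarrow$(i) both compute the stabilizer directly from \eqref{action}, \eqref{G}, \eqref{chi}. The only cosmetic difference is that the paper describes the full stabilizer $G_a$ as the solution set of $t_{i_1}^{l_{i_1}}\cdots t_{i_s}^{l_{i_s}}=1$ (with $t_j=1$ for $j\neq i_1,\ldots,i_s$) and observes it has at least two solutions, whereas you exhibit the explicit nontrivial subgroup $G\cap L_i\cong\boldsymbol\mu_{|l_i|}$ (or $\mathbf G_{\rm m}$) sitting inside $G_a$ for any chosen $i$ with $a_i=0$; this makes slightly more visible why the hypothesis $l_i\neq\pm1$ is needed only at the ``codimension-one'' points.
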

\begin{proof} Lemma \ref{op} implies (i)$\Rightarrow$(ii). Now assume that (ii) holds.
Then there are the indices  $i_1,\ldots, i_s$, where $s\geqslant 1$, such that $a_j=0$ for $j=i_1,\ldots, i_s$ and $a_j\neq 0$ for other $j$'s. It follows from \eqref{action}, \eqref{G}, and \eqref{chi} that an element  $(t_1x_1,\ldots, t_nx_n)\in D_n$ lies in the  $G$-stabilizer of the point  $a$ if and only if $t_j=1$ for $j\neq i_1,\ldots, i_s$ and
\begin{equation}\label{eq}
t_{i_1}^{l_{i_1}}\cdots t_{i_s}^{l_{i_s}}=1.
\end{equation}
 Since there is no $\pm 1$ among the numbers $l_1,\ldots, l_n$ and $k$ is an algebraically closed field of characteristic zero, equality \eqref{eq}, considered as the equation in $t_{i_1},\ldots, t_{i_s}$, has at least two solutions. Hence the $G$-stabilizer of the point $a$ is nontrivial. This proves (ii)$\Rightarrow$(i).
\quad $\square$ \renewcommand{\qed}{}\end{proof}

\begin{lemma}\label{clozero}
If among $l_1,\ldots, l_n$ there are two nonzero numbers with different signs, then every
 $G$-orbit contains $(0,\ldots,0)$ in its closure.
\end{lemma}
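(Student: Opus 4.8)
The plan is to prove the (a priori stronger) statement that for every point $a=(a_1,\dots,a_n)\in\An$ there is a one‑parameter subgroup of $G$ contracting $a$ to the origin; this immediately gives $(0,\dots,0)\in\overline{G\cdot a}$. Write $l=(l_1,\dots,l_n)$ and, for $d=(d_1,\dots,d_n)\in\mathbf Z^n$, put $\varphi_d\colon\mathbf G_{\rm m}\to D_n$, $t\mapsto(t^{d_1}x_1,\dots,t^{d_n}x_n)$. By \eqref{epsilonn} and \eqref{chi}, $\varphi_d(\mathbf G_{\rm m})\subseteq G={\rm ker}\,\varepsilon_1^{l_1}\cdots\varepsilon_n^{l_n}$ if and only if $\langle d,l\rangle:=d_1l_1+\cdots+d_nl_n=0$. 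Set $I:=\{i: a_i\neq 0\}$. If $d_i\geqslant 1$ for every $i\in I$, then, since $a_i=0$ for $i\notin I$, the morphism $\mathbf G_{\rm m}\to\An$, $t\mapsto\varphi_d(t)\cdot a=(t^{d_1}a_1,\dots,t^{d_n}a_n)$, extends to a morphism $\mathbf A^1\to\An$ carrying $0$ to $(0,\dots,0)$, whence $(0,\dots,0)\in\overline{\varphi_d(\mathbf G_{\rm m})\cdot a}\subseteq\overline{G\cdot a}$. So it all reduces to producing, for each $a\neq0$, a vector $d\in\mathbf Z^n$ with $\langle d,l\rangle=0$ and $d_i\geqslant1$ for all $i\in I$ (the case $a=0$ is trivial, as $(0,\dots,0)$ is $G$‑fixed).

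Such a $d$ is built by inspecting the signs of the $l_i$, $i\in I$. If both a positive and a negative value occur among $\{l_i:i\in I\}$, let $P=\{i\in I:l_i>0\}$, $N=\{i\in I:l_i<0\}$, $\alpha=\sum_{i\in P}l_i$, $\beta=-\sum_{i\in N}l_i$ (both $\geqslant1$), and take $d_i=\beta$ for $i\in P$, $d_i=\alpha$ for $i\in N$, $d_i=1$ for $i\in I$ with $l_i=0$, and $d_i=0$ for $i\notin I$; then $\langle d,l\rangle=\alpha\beta-\alpha\beta=0$ and $d_i\geqslant1$ throughout $I$. Otherwise the $l_i$ with $i\in I$ all have one and the same weak sign; since $G=D_n(l_1,\dots,l_n)$ is unchanged when $(l_1,\dots,l_n)$ is replaced by $(-l_1,\dots,-l_n)$ (Corollary \ref{cod1}(iii)), we may assume $l_i\geqslant0$ for all $i\in I$. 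This is where the hypothesis is used: it yields an index $q$ with $l_q<0$, and necessarily $q\notin I$. Put $d_i=-l_q$ for $i\in I$, $d_q=\sum_{j\in I}l_j$, and $d_i=0$ for the remaining indices; then $\langle d,l\rangle=(-l_q)\sum_{j\in I}l_j+\big(\sum_{j\in I}l_j\big)l_q=0$, while $d_i=-l_q\geqslant1$ on $I$.

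With $d$ chosen, $\varphi_d$ is the desired one‑parameter subgroup of $G$, and the conclusion follows as in the first paragraph. The main obstacle is the second case: when all the nonvanishing coordinates of $a$ lie on indices where the $l_i$ share a sign, $a$ cannot be driven to the origin "inside" those coordinates, so one must recruit a coordinate $q$ on which $a$ vanishes but $l_q$ has the opposite sign — which is exactly what the standing hypothesis supplies, and exactly the point at which the assertion breaks down without it (for instance when all nonzero $l_i$ have the same sign, cf.\ Lemma \ref{stable}). The remaining ingredients — the identity $\langle d,l\rangle=0$, the membership $\varphi_d(\mathbf G_{\rm m})\subseteq G$ via \eqref{chi}, and the extension of $t\mapsto\varphi_d(t)\cdot a$ across $t=0$ — are routine.
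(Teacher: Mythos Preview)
Your proof is correct and follows the same strategy as the paper's: exhibit a one-parameter subgroup $\varphi_d\colon\mathbf G_{\rm m}\to G$ with $\langle d,l\rangle=0$ and with positive exponents on the relevant coordinates, so that $\lim_{t\to 0}\varphi_d(t)\cdot a=(0,\dots,0)$.

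The execution differs in one respect worth noting. You make the choice of $d$ depend on $a$ via the set $I=\{i:a_i\neq 0\}$, which forces a case split according to the signs of the $l_i$ on $I$. The paper instead produces a \emph{single} $d$ with all $d_i>0$, valid for every $a$ at once: after arranging $l_1>0$, $l_2,\dots,l_s\geqslant 0$, $l_{s+1}<0$, $l_{s+2},\dots,l_n\leqslant 0$, it picks an integer $m>0$ large enough that $q:=l_2+\cdots+l_s+ml_{s+1}+l_{s+2}+\cdots+l_n<0$ and takes $d=(-q,\,l_1,\dots,l_1,\,l_1m,\,l_1,\dots,l_1)$; then $\langle d,l\rangle=l_1(-q+q)=0$ and every $d_i>0$. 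This avoids your case analysis entirely. Your version, on the other hand, makes explicit the precise r\^ole of the mixed-sign hypothesis (your second case), which is a nice conceptual touch. Either way, the argument goes through.
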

\begin{proof} For definiteness, let, for instance, be
\begin{equation*}
l_1>0,\; l_2\geqslant 0,\ldots, l_s\geqslant 0,\; l_{s+1}<0,\; l_{s+2}\leqslant 0,\ldots, l_n\leqslant 0.
\end{equation*}
Choose a positive integer $d$ so large that
\begin{equation*}
q:=l_2+\cdots+l_s+dl_{s+1}+l_{s+2}+\cdots+l_n<0.
\end{equation*}
 Since $-ql_1+l_1l_2+\cdots + l_1l_s+l_1dl_{s+1}+l_1l_{s+2}+\ldots+l_1l_n=0$, from \eqref{G} and \eqref{chi} we infer that the image of the homomorphism
\begin{equation}\label{q}
\varphi\colon \mathbf G_{\rm m}\to D_n,\;\;
t\mapsto (t^{-q}x_1, t^{l_1}x_2,\ldots, t^{l_1}x_{s}, t^{l_{1}d}x_{s+1},
t^{l_1}x_{s+2}\ldots, t^{l_1}x_n)
\end{equation}
lies in $G$. On the other hand, since the numbers $-q, l_1$ and $d$ are positive, \eqref{q}
implies that for every point  $a\in\An$ the limit ${\rm lim}_{t\to 0}\varphi(t)\cdot  a$ exists and is equal to $(0,\ldots, 0)$.
\quad $\square$ \renewcommand{\qed}{}\end{proof}

Now consider the case where $0$ and $\pm1$ are contained among
 $l_1,\ldots, l_n$, there are at least two nonzero $l_i$'s, and all of them have the same sign. By \eqref{chi}, without loss of generality we may assume that this sign is positive.
 Up to replacing  the group $G$ by its conjugate by means of an element of $N_{\GL_n}(D_n)$, we may then assume that
\begin{equation}\label{lxxx}
\begin{split}
l_1=\ldots=l_p&=1,\; l_{p+1}\geqslant 2,\ldots, l_{q}\geqslant 2, \;l_{q+1}=\ldots=l_n=0,\\
&\mbox{ where\; $p\geqslant 1$,\;$n>q\geqslant p$\;\;and\;\;$q\geqslant 2$}.
\end{split}
\end{equation}

\begin{lemma}\label{10} Assume that \eqref{lxxx} holds. Take a point $a=(a_1,\ldots,a_n)\in\An$.
\begin{enumerate}[\hskip 2.2mm\rm(i)]
\item Let $a\notin
\bigcup_{i=1}^n H_i$. Then the orbit $G\cdot b$, where
$b=(a_1,\ldots, a_q,0,\ldots,0)$, lies in the closure of the orbit
$G\cdot  a$,
is closed, and  $\dim G\cdot  b=q-1$.
\item Let $a\in H_i$. Then the group $G_a$ is
\begin{enumerate}[\hskip .9mm\rm(a)]
\item  trivial if $1\leqslant i\leqslant p$ and $a\in \mathcal O_i$ {\rm(}see Lemma {\rm\ref{i0});}
\item  nontrivial and finite if  $p+1\leqslant i\leqslant q$ and $a\in \mathcal O_i;$
\item  has positive dimension if  $i>q$.
\end{enumerate}
\end{enumerate}
\end{lemma}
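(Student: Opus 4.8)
The plan is to handle parts (i) and (ii) separately, in both cases working directly from the description of $G$ and its action furnished by \eqref{G}, \eqref{chi}, \eqref{action}, and drawing on Lemmas \ref{op}, \ref{i0}, \ref{stable} together with Corollary \ref{cod1}.

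For part (i) I would proceed in three steps. First, to see that $G\cdot b\subseteq\overline{G\cdot a}$, I exhibit a one-parameter subgroup of $G$ degenerating $a$ to $b$: under \eqref{lxxx} the integer vector $(d_1,\ldots,d_n)$ with $d_1=\cdots=d_q=0$ and $d_{q+1}=\cdots=d_n=1$ satisfies $\sum_j d_jl_j=\sum_{j>q}l_j=0$, so by \eqref{chi} the map $\varphi\colon t\mapsto(x_1,\ldots,x_q,tx_{q+1},\ldots,tx_n)$ is a homomorphism $\mathbf G_{\rm m}\to G$; since $a\notin\bigcup_i H_i$ one has $\lim_{t\to0}\varphi(t)\cdot a=b$, and as $\overline{G\cdot a}$ is $G$-stable this yields $G\cdot b\subseteq\overline{G\cdot a}$. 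Second, I compute $G_b$ from \eqref{action}: fixing $b=(a_1,\ldots,a_q,0,\ldots,0)$ forces $t_1=\cdots=t_q=1$ because $a_1,\ldots,a_q$ are nonzero, and then the defining relation $t_1^{l_1}\cdots t_n^{l_n}=1$ holds automatically since $l_{q+1}=\cdots=l_n=0$; hence $G_b\cong\mathbf G_{\rm m}^{n-q}$, so that, using $\dim G=n-1$ (Corollary \ref{cod1}(i), or \eqref{rk}), we get $\dim G\cdot b=(n-1)-(n-q)=q-1$. Third, for the closedness of $G\cdot b$ I restrict to the closed, $G$-invariant subset $Z:=\{a\in\mathbf A^n\mid x_{q+1}(a)=\cdots=x_n(a)=0\}$, identified with $\mathbf A^q$; since $l_{q+1}=\cdots=l_n=0$, the projection of $G$ onto its first $q$ coordinates has image exactly $D_q(l_1,\ldots,l_q)$ and the $G$-action on $Z$ factors through it, so $G\cdot b$ coincides with the $D_q(l_1,\ldots,l_q)$-orbit of $b$ in $\mathbf A^q$. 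As the first $q$ coordinates of $b$ are all nonzero and $l_1,\ldots,l_q$ are all positive, Lemma \ref{stable} applied in dimension $q$ shows this orbit is closed in $\mathbf A^q=Z$, hence closed in $\mathbf A^n$.

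For part (ii) all three cases reduce to a stabilizer computation from \eqref{action}, \eqref{chi}. If $a\in H_i\cap\mathcal O_i$, then fixing $a$ forces $t_j=1$ for every $j\ne i$ (those coordinates of $a$ being nonzero), so the relation $t_1^{l_1}\cdots t_n^{l_n}=1$ collapses to $t_i^{l_i}=1$ and $G_a\cong\boldsymbol\mu_{l_i}$, where $l_i\geqslant 1$ because $i\leqslant q$; this group is trivial exactly when $l_i=1$, i.e.\ when $1\leqslant i\leqslant p$, which is (a), and it is a nontrivial finite group when $l_i\geqslant 2$, i.e.\ when $p+1\leqslant i\leqslant q$, which is (b). For (c) we only know $a\in H_i$ with $i>q$, so $l_i=0$; then by \eqref{chi} the one-parameter subgroup $t\mapsto(x_1,\ldots,x_{i-1},tx_i,x_{i+1},\ldots,x_n)$ lies in $G$ and fixes $a$ (its $i$-th coordinate being $0$), whence $\dim G_a\geqslant 1$.

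I do not anticipate a serious obstacle: the arguments are routine consequences of the explicit formulas for $G$ and its action. The one place demanding a little care is the closedness assertion in (i), where one must justify the reduction to $\mathbf A^q$ — that the $G$-action on $Z$ genuinely factors through $D_q(l_1,\ldots,l_q)$ and that the resulting orbit coincides with $G\cdot b$ — before invoking Lemma \ref{stable}.
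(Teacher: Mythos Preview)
Your proposal is correct and follows essentially the same approach as the paper: the one-parameter subgroup producing $b$ as a limit, the direct stabilizer computation giving $\dim G\cdot b=q-1$, and the case analysis in (ii) are all handled identically (the paper simply says (ii) is ``analogous to the proof of Lemma~\ref{npm}'', which is exactly what you spell out).

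The one small difference is in the closedness argument for (i). The paper does not reduce to $Z\cong\mathbf A^q$; instead it reruns the proof of Lemma~\ref{stable} in place: assuming $G\cdot b$ is not closed, it takes a one-parameter subgroup $\psi(t)=(t^{d_1}x_1,\ldots,t^{d_n}x_n)$ of $G$ with $\lim_{t\to0}\psi(t)\cdot b\notin G\cdot b$, observes $d_1,\ldots,d_q\geqslant 0$ from the nonvanishing of $a_1,\ldots,a_q$, combines this with $\sum_j d_jl_j=0$ and $l_{q+1}=\cdots=l_n=0$ to force $d_1=\cdots=d_q=0$, and derives a contradiction. Your reduction to $D_q(l_1,\ldots,l_q)$ acting on $\mathbf A^q$ followed by a citation of Lemma~\ref{stable} is a legitimate and slightly tidier packaging of the same idea; the paper's version is more self-contained but repeats work already done.
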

\begin{proof}    From \eqref{G}, \eqref{chi}, and \eqref{lxxx} it follows that the image of the homomorphism
\begin{equation*}\label{q}
\varphi\colon \mathbf G_{\rm m}\to D_n,\quad
t\mapsto (x_1,\ldots, x_{q}, tx_{q+1},
\ldots, tx_n),
\end{equation*}
lies in $G$. Hence the point
\begin{equation}\label{b}
{\rm lim}_{t\to 0}\varphi(t)\cdot a=(a_1,\ldots, a_q,0,\ldots,0)=b
\end{equation}
lies in the closure of the orbit  $G\cdot  a$.

Now let $a\notin \bigcup_{i=1}^n H_i$. If the orbit $G\cdot  b$ is not closed, then, as in the proof of Lemma \ref{stable}, there exist a homomorphism
\begin{equation}\label{psi}
\psi\colon \mathbf G_{\rm m}\to G, \quad t\mapsto (t^{d_1}x_1,\ldots, t^{d_n}x_n),
\end{equation}
such that $c:={\rm lim}_{t\to 0}\psi(t)\cdot b\in \overline{G\cdot b}\setminus G\cdot b$.
From \eqref{b} it follows that $d_1\geqslant 0,\ldots, d_q\geqslant 0$, and from \eqref{lxxx}, \eqref{G}, and \eqref{chi} it follows that $d_1l_1+\cdots+d_ql_q=0$. Since $l_1,\ldots,l_q$ are positive,
this yields $d_1=\ldots=d_q=0$. In view of \eqref{b}, from this we infer that $\psi(t)\cdot b=b$
for every $t$, and therefore, $c=b$\,---\,a contradiction. Thus, $G\cdot  b$ is closed. Since $a_1,\ldots, a_q$ are nonzero, \eqref{G}, \eqref{chi}, \eqref{action}, \eqref{lxxx}, and \eqref{b} imply that an element $(t_1x_1,\ldots, t_nx_n)\in D_n$ lies in $G_b$ if and only if
 $t_1=\ldots=t_q=1$. This proves (i).

The arguments analogous to that used in the proof of Lemma \ref{npm} yield (ii).
\quad $\square$ \renewcommand{\qed}{}\end{proof}

Finally, consider the case where one of the numbers
 $l_1,\ldots,l_n$ is equal to $\pm 1$
(in view of \eqref{chi}, without loss of generality we may assume that it is equal to 1), and all the others are equal to $0$.

\begin{lemma}\label{11}
Let $l_i=1$ and let $l_j=0$ for $j\neq i$. For any $s\in k$ denote by $H(s)$ the hyperplane in $\An$ defined by the equation $x_i+s=0$. Then:
\begin{enumerate}[\hskip 2.2mm \rm (i)]
\item $\bigcup_{j\neq i} H_j$ is the set of points with nontrivial $G$-stabilizer {\rm(}that auto\-ma\-tically has positive dimension{\rm)}.
\item The open subset $H(s)\setminus \bigcup_{j\neq i} H_j$ of $H(s)$ is an $(n-1)$-dimensional $G$-orbit and every $(n-1)$-dimensional $G$-orbit is of this form.
\end{enumerate}
\end{lemma}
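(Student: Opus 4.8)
The plan is to translate everything into an explicit description of $G$ and of its action on $\An$, after which both assertions reduce to bookkeeping about coordinate hyperplanes. First I would record, using \eqref{G} and \eqref{chi}, that the hypothesis $l_i=1$ and $l_j=0$ for $j\neq i$ means precisely $G=\ker\varepsilon_i=\{(t_1x_1,\ldots,t_nx_n)\in D_n\mid t_i=1\}$, and then use \eqref{action} to write down the action: $g=(t_1x_1,\ldots,t_nx_n)\in G$ sends $a=(a_1,\ldots,a_n)$ to the point obtained by multiplying the $j$-th coordinate by $t_j$ for $j\neq i$ and leaving the $i$-th coordinate unchanged. In particular $x_i$ is $G$-invariant, so every hyperplane $H(s)$ is $G$-invariant; this is the one structural remark that drives the proof.

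For part (i) I would compute the stabilizer directly from the displayed action formula: $(t_1x_1,\ldots,t_nx_n)\in G_a$ exactly when $t_j=1$ for all $j\neq i$ with $a_j\neq0$, the remaining $t_j$ (those corresponding to vanishing coordinates) being free. Hence $G_a$ is trivial when $a\notin\bigcup_{j\neq i}H_j$, and when $a_{j_0}=0$ for some $j_0\neq i$ the stabilizer contains the one-parameter subgroup obtained by letting the $j_0$-th entry range over $\mathbf G_{\rm m}$ and fixing the others, so it has positive dimension. This is the same type of argument already used for Lemma \ref{op} and Lemma \ref{npm}.

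For part (ii) I would set $U(s):=H(s)\setminus\bigcup_{j\neq i}H_j$, which is a nonempty open, hence irreducible $(n-1)$-dimensional, $G$-invariant subset of $H(s)$. By (i) every point of $U(s)$ has trivial $G$-stabilizer, so each $G$-orbit meeting $U(s)$ has dimension $\dim G=n-1$; and the explicit action formula shows $U(s)$ is a single orbit, since for $b,c\in U(s)$ the element of $G$ with $i$-th entry $1$ and $j$-th entry $b_j^{-1}c_j$ for $j\neq i$ carries $b$ to $c$ (legitimate precisely because $b_j\neq0$). This is parallel to Lemma \ref{i0}. For the converse, given an $(n-1)$-dimensional orbit $\mathcal O=G\cdot a$ I would note $\dim G_a=\dim G-\dim\mathcal O=0$, so $G_a$ is finite, whence by (i) $a\notin\bigcup_{j\neq i}H_j$; taking $s:=-a_i$ puts $a\in U(s)$, and the first part of (ii) gives $\mathcal O=U(s)=H(s)\setminus\bigcup_{j\neq i}H_j$.

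I do not expect a genuine obstacle here: everything reduces to elementary manipulations with the diagonal action, and the only point requiring attention is the (trivial) observation that $x_i$ is $G$-invariant, which is what makes the family $\{H(s)\}$ the right one. The closest thing to a subtlety is making sure the dimension count in the converse direction of (ii) is valid, i.e.\ that an $(n-1)$-dimensional orbit has zero-dimensional, hence finite, stabilizer — but this is immediate from $\dim G=n-1$.
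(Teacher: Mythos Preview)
Your proof is correct and follows essentially the same approach as the paper's: the paper's proof is extremely terse, simply pointing to \eqref{action}, \eqref{G}, \eqref{chi} for (i) and to (i), the $G$-invariance of $H(s)$, and the equality $\dim G=\dim H(s)=n-1$ for (ii). Your argument just spells these hints out, the only cosmetic difference being that you establish transitivity on $H(s)\setminus\bigcup_{j\neq i}H_j$ by an explicit element (as in Lemma~\ref{i0}) rather than by the bare dimension count the paper alludes to.
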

\begin{proof} Part (i) immediately follows from  \eqref{action}, \eqref{G}, and \eqref{chi}, and part (ii) follows from (i), the invariance of $H(\alpha)$, and the equality $\dim G=\dim H(\alpha)=n-1$.
\quad $\square$ \renewcommand{\qed}{}\end{proof}

\section{The group $N_{\Autn}(D_n(l_1,\ldots, l_n))$}

First, we shall prove several general statements about normalizers for the actions on arbitrary affine varieties.

\begin{lemma} \label{noal}
Let $X$ be an irreducible affine variety and let $G$ be an algebraic subgroup of  $\Aut X$. Then the following properties are equivalent:
\begin{enumerate}[\hskip 2.2mm \rm(i)]
\item $N_{\Aut X}(G)$ is an algebraic subgroup of  $\Aut X$.
\item The natural action of $N_{\Aut X}(G)$ on $k[X]$ is locally finite.
\end{enumerate}
\end{lemma}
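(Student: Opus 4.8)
The plan is to prove the equivalence (i)$\Leftrightarrow$(ii) by a standard ``algebraic subgroup $\Leftrightarrow$ locally finite action'' argument, adapted to the setting of $\Aut X$ acting on the coordinate algebra $k[X]$ of an irreducible affine variety. Throughout, let $N:=N_{\Aut X}(G)$. The natural action of $\Aut X$ on $k[X]$ is by $k$-algebra automorphisms: $g\cdot f:=f\circ g^{-1}$.

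\smallskip

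\textbf{(i)$\Rightarrow$(ii).} Suppose $N$ is an algebraic subgroup of $\Aut X$, i.e.\ there is an algebraic group $H$ and an algebraic family $H\to \Aut X$ (a homomorphism that is an algebraic family in the sense of \cite{Ram}) with image $N$. First I would reduce to showing that the action of $H$ on $k[X]$ pulled back along $H\to N$ is locally finite; since $H$ surjects onto $N$, local finiteness of the $H$-action on any $v\in k[X]$ forces the $N$-orbit of $v$ to span the same finite-dimensional subspace, so the $N$-action is locally finite as well. To see that $H$ acts locally finitely: an algebraic family $H\times X\to X$ is given, by definition, by a morphism of varieties, so for fixed $f\in k[X]$ the function $(h,a)\mapsto f(h^{-1}\cdot a)$ lies in $k[H\times X]=k[H]\otimes_k k[X]$ (using that $X$, hence $H\times X$, is affine). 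Writing this element as a finite sum $\sum_{i=1}^m \varphi_i\otimes f_i$ with $\varphi_i\in k[H]$, $f_i\in k[X]$, we get $h\cdot f=\sum_i \varphi_i(h)f_i$ for every $h\in H$; hence the $H$-orbit of $f$ lies in the finite-dimensional span of $f_1,\ldots,f_m$. This is exactly local finiteness. (This is the usual proof that an algebraic group acting morphically on an affine variety acts locally finitely on its coordinate ring; see \cite[Prop.\,1.9]{PV} or \cite{Borel}.)

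\smallskip

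\textbf{(ii)$\Rightarrow$(i).} This is the substantive direction and where the real work lies. Assume the $N$-action on $V:=k[X]$ is locally finite. I would proceed as follows. Fix a finite generating set $f_1,\ldots,f_r$ of the $k$-algebra $k[X]$; by local finiteness each $f_j$ lies in a finite-dimensional $N$-stable subspace, so their sum spans a finite-dimensional $N$-stable subspace $W\subseteq V$ that generates $k[X]$ as an algebra. The representation $\rho\colon N\to \GL(W)$ is then faithful, because an element of $N\subseteq\Aut X$ acting trivially on the algebra generators $W$ is the identity on $k[X]$, hence is $\mathrm{id}_X$. Thus $N$ is identified with a subgroup of $\GL(W)$. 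Next I would argue that the image $\rho(N)$ is \emph{closed} in $\GL(W)$: an automorphism $g\in\GL(W)$ lies in $\rho(N)$ iff (a) $g$ extends to a $k$-algebra automorphism of $k[X]$ — a closed condition, since $k[X]=k[W]/I$ for the ideal $I$ of relations among a basis of $W$, and extendability amounts to $g$ preserving (the degree-wise pieces of) $I$ under the induced map on the polynomial algebra — and (b) the resulting automorphism of $X$ normalizes $G$, i.e.\ conjugation by it preserves $G\subseteq\Aut X$; since $G$ is algebraic and acts on $W$ through $\rho|_G$, and $W$ generates $k[X]$, the normalizing condition can be rephrased as $g\,\rho(G)\,g^{-1}=\rho(G)$ inside $\GL(W)$, again a closed condition on $g$. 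Hence $\rho(N)$ is a closed (= algebraic) subgroup of $\GL(W)$, call it $\overline N$, with $\rho\colon N\xrightarrow{\sim}\overline N$.

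\smallskip

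Finally I would promote this to the statement that $N$ is an algebraic \emph{subgroup of $\Aut X$} in the sense of the paper, i.e.\ exhibit an algebraic family $\overline N\to \Aut X$ with image $N$. The tautological action $\overline N\times W\to W$ extends uniquely to an action $\overline N\times k[X]\to k[X]$ by algebra automorphisms (every element of $\overline N$ extends, by the definition of $\overline N$, and uniqueness follows since $W$ generates), and this is given by a morphism of varieties because it is polynomial in matrix entries on generators and hence on all of $k[X]$; dualizing gives a morphism $\overline N\times X\to X$, i.e.\ an algebraic family $\overline N\to\Aut X$, whose image is precisely $N$. Therefore $N_{\Aut X}(G)$ is an algebraic subgroup of $\Aut X$, completing (ii)$\Rightarrow$(i). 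The main obstacle I anticipate is the bookkeeping in (ii)$\Rightarrow$(i): one must be careful that ``extends to an algebra automorphism of $k[X]$'', ``defines a morphism $X\to X$ with morphism inverse'', and ``normalizes the algebraic subgroup $G$'' are all closed conditions cut out inside a finite-dimensional $\GL(W)$, and that the resulting $\overline N\to\Aut X$ genuinely is an algebraic family in the sense of \cite{Ram}; modulo that, everything is the standard linearization/local-finiteness dictionary.
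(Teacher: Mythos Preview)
Your proof is correct and follows essentially the same route as the paper: choose a finite-dimensional $N$-stable subspace $W\subseteq k[X]$ that generates the algebra, embed $N$ faithfully into $\GL(W)$, and identify its image as the intersection of two closed conditions (``induces an automorphism of $X$'' and ``normalizes $G$''). The only cosmetic difference is that the paper passes to the dual $V^*$, realizes $X$ as a closed subvariety of $V^*$, and phrases your condition~(a) as the transporter ${\rm Tran}_{\GL(V^*)}(X,X)=\{g:g\cdot X\subseteq X\}$, invoking irreducibility of $X$ to conclude that $g\cdot X\subseteq X$ already forces $g\cdot X=X$; your algebraic formulation via the ideal $I$ of relations achieves the same thing (just be sure to require both $g\cdot I\subseteq I$ and $g^{-1}\cdot I\subseteq I$, or invoke the paper's irreducibility argument, so that you get an \emph{automorphism} and not merely an endomorphism).
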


\begin{proof}   (i)$\Rightarrow$(ii) This follows from the fact that the natural action on  $k[X]$ of every algebraic subgroup of
$\Aut X$ is locally finite  (see\;\cite[Prop.\,1.9]{Borel}).

(ii)$\Rightarrow$(i) Assume that (ii) holds.\;Then in  $k[X]$  there is an $N_{\Aut X}(G)$-invariant finite dimensional  $k$-linear subspace  $V$ containing a system of generators of the $k$-algebra $k[X]$.
Hence the homomorphism
\begin{equation*}
\rho\colon N_{\Aut X}(G)\to \GL(V^*)
\end{equation*}
determined by the action of  $N_{\Aut X}(G)$ on $V$ is an embedding.
Consider the $N_{\Aut X}(G)$-equivariant map
\begin{equation*}
\iota\colon X\to V^*,\quad \iota(x)(f):=f(x)\quad\mbox{for every $x\in X$, $f\in V$.}
\end{equation*}
The standard argument (see\,\cite[Prop.\;1.12]{Borel}) shows that
$\rho|_G$ is a morphism of algebraic groups, and
$\iota$ is a closed embedding. Identify
$X$ with $\iota(X)$ by means of $\iota$, and $N_{\Aut X}(G)$ with $\rho(N_{\Aut X}(G))$ by means of $\rho$. Then  $X$ is a closed subvariety of  $V^*$, and $N_{\Aut X}(G)$ and $G$ are the subgroups of  $\GL(V^*)$; besides, $G$ is closed, and
\begin{gather}
N_{\Aut X}(G)\subset N_{\GL(V^*)}(G)\cap {\rm Tran}_{\GL(V^*)}(X, X), \quad\mbox{where}\label{cap}\\
{\rm Tran}_{\GL(V^*)}(X, X):=\{g\in \GL(V^*)\mid g\cdot  X\subset X\}.\label{tran}
\end{gather}

In fact, in the right-hand side of \eqref{tran}  the equality $g\cdot  X= X$ automatically holds: indeed,  $X$ is irreducible and closed in  $V^*$, and $g\in\Aut V^*$ implies that $g\cdot  X$ is a closed subset of  $X$ of the same dimension as  $X$. Hence ${\rm Tran}_{\GL(V^*)}(X, X)$, as well as  $N_{\GL(V^*)}(G)$, is a subgroup of   $\GL(V^*)$, and therefore, the right-hand side of \eqref{cap} is a subgroup of  $\GL(V^*)$. Its elements normalize $G$ an, being restricted to $X$, are the automorphisms of  $X$; whence, they lie in  $N_{\Aut X}(G)$. Therefore,
 \begin{equation}\label{=}
N_{\Aut X}(G)= N_{\GL(V^*)}(G)\cap {\rm Tran}_{\GL(V^*)}(X, X).
\end{equation}

From closedness of  $G$ in $\GL(V^*)$
and that of $X$ in $V^*$ we deduce, respecti\-ve\-ly, that  $N_{\GL(V^*)}(G)$ and ${\rm Tran}_{\GL(V^*)}(X, X)$ are closed in $\GL(V^*)$ (see \cite[Prop.\,1.7]{Borel}). This and \eqref{=} imply that $N_{\Aut X}(G)$ is closed in $\GL(V^*)$. Hence $N_{\Aut X}(G)$ is an algebraic subgroup of $\Aut X$.
\quad $\square$ \renewcommand{\qed}{}\end{proof}


\begin{theorem} \label{fc} Let $X$ be an irreducible affine variety
and let $G$ be a reductive algebraic subgroup in $\Aut X$ such that
\begin{equation}\label{=k}
k[X]^G=k.
\end{equation}
In either of the following cases
 $N_{\Aut X} (G)$ is the algebraic subgroup of
$\Aut X${\rm:}
 \begin{enumerate}[\hskip 2.2mm \rm(i)]
\item $G$ has a fixed point in $X$.
\item $G^0$ is semisimple.
\end{enumerate}
\end{theorem}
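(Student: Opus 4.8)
The plan is to deduce both cases from Lemma~\ref{noal}: it suffices to show that the natural action of $N:=N_{\Aut X}(G)$ on $k[X]$ is locally finite. I would set up, for both cases at once, the $G$-isotypic decomposition $k[X]=\bigoplus_{\lambda\in\Lambda}k[X]_{(\lambda)}$, where $\Lambda$ is the set of isomorphism classes of irreducible $G$-modules occurring in the (rational) $G$-module $k[X]$. Two observations are common to both cases. First, since $k[X]^G=k$ and the module of covariants $\operatorname{Hom}_G(V_\lambda,k[X])\cong(k[X]\otimes V_\lambda^*)^G$ is a finitely generated $k[X]^G$-module (standard, from reductivity of $G$ and finite generation of $k[X]$), every component $k[X]_{(\lambda)}=V_\lambda\otimes\operatorname{Hom}_G(V_\lambda,k[X])$ is finite-dimensional. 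Second, since $N$ normalizes $G$, each $n\in N$ maps $k[X]_{(\lambda)}$ onto $k[X]_{({}^{n}\!\lambda)}$, where ${}^{n}\!\lambda$ is the class of $V_\lambda$ with $G$-action precomposed with conjugation by $n^{-1}$; in particular $\dim V_{{}^{n}\!\lambda}=\dim V_\lambda$. Hence for $f=\sum_{\lambda\in\Lambda_f}f_\lambda$ with $\Lambda_f$ finite, $N\cdot f$ lies in $\bigoplus_{\mu}k[X]_{(\mu)}$ with $\mu$ running over $\bigcup_{\lambda\in\Lambda_f}N\cdot\lambda$; so in each case it remains only to prove that every $N$-orbit in $\Lambda$ is finite.

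For case (ii), I would argue that if $G^0$ is semisimple then, by the Weyl dimension formula for $G^0$ together with finiteness of $G/G^0$, the set $\{\mu\in\Lambda:\dim V_\mu=d\}$ is finite for every $d$; since $N\cdot\lambda\subseteq\{\mu\in\Lambda:\dim V_\mu=\dim V_\lambda\}$, every $N$-orbit in $\Lambda$ is finite, local finiteness follows, and Lemma~\ref{noal} applies.

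For case (i), I would use the fixed point as follows. Since $k[X]^G=k$, the quotient $X/\!\!/G$ is a point, so $X$ has a unique closed $G$-orbit; because $G$ has a fixed point $x_0$ and any $G$-fixed point is a closed $G$-orbit, uniqueness forces $X^G=\{x_0\}$. As $N$ normalizes $G$ it stabilizes $X^G$, hence fixes $x_0$; therefore the maximal ideal $\mathfrak m:=\mathfrak m_{x_0}$ and all its powers $\mathfrak m^j$ are $N$-stable $G$-submodules of $k[X]$. Here one uses that $\bigcap_j\mathfrak m^j=0$ (Krull's intersection theorem in the Noetherian domain $k[X]$) and that each $\mathfrak m^j/\mathfrak m^{j+1}$ is finite-dimensional. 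Given $f$ as above, pick $e$ with $f_\lambda\notin\mathfrak m^{e+1}$ for all $\lambda\in\Lambda_f$, and write $f_\lambda\in\mathfrak m^{j_\lambda}\setminus\mathfrak m^{j_\lambda+1}$ with $j_\lambda\le e$. For $n\in N$, since $n$ preserves the filtration, $n\cdot f_\lambda\in\mathfrak m^{j_\lambda}\setminus\mathfrak m^{j_\lambda+1}$, so its image in $\mathfrak m^{j_\lambda}/\mathfrak m^{j_\lambda+1}$ is a nonzero vector of $G$-type ${}^{n}\!\lambda$; thus ${}^{n}\!\lambda$ occurs in the finite-dimensional $G$-module $\bigoplus_{j=0}^{e}\mathfrak m^j/\mathfrak m^{j+1}$. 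Therefore $\bigcup_{\lambda\in\Lambda_f}N\cdot\lambda$ is contained in the finite set of $G$-types occurring in this module, $N\cdot f$ spans a finite-dimensional subspace, $N$ acts locally finitely, and Lemma~\ref{noal} again gives the claim.

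I expect the main obstacle to be case (i): the point is to realize that, once one knows $N$ fixes $x_0$ — which rests on the elementary but essential identity $X^G=\{x_0\}$ forced by $k[X]^G=k$ — the $\mathfrak m_{x_0}$-adic filtration is $N$-stable with finite-dimensional graded pieces, so that although infinitely many $G$-types may occur in $k[X]$ (already when $G$ is a torus), they are distributed among these pieces in an $N$-compatible way that keeps $N$-orbits finite. The other point one must not skip is finite-dimensionality of the isotypic components $k[X]_{(\lambda)}$, i.e.\ finite generation of the modules of covariants over $k[X]^G=k$, which is what makes reductivity and the hypothesis $k[X]^G=k$ do their work in both cases.
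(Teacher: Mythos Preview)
Your proof is correct and follows essentially the same approach as the paper: reduce via Lemma~\ref{noal} to local finiteness of the $N$-action on $k[X]$, use the isotypic decomposition with finite-dimensional pieces (from $k[X]^G=k$), and in case (i) exploit the $N$-stable $\mathfrak m_{x_0}$-adic filtration while in case (ii) use the Weyl dimension formula. The only cosmetic differences are that the paper works with the quotient $k[X]/\mathfrak m^l$ rather than the associated graded $\bigoplus_{j\le e}\mathfrak m^j/\mathfrak m^{j+1}$ in (i), and bounds $\dim k[X]_\mu$ rather than $\dim V_\mu$ in (ii); both variants carry the same idea.
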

\begin{proof} Take $f\in k[X]$. We shall prove that in each of cases (i) and (ii) the $k$-linear
 span of the orbit $N_{\Aut X}(G)\cdot f$ is finite dimensional. The claim of the theorem will then follow from Lemma \ref{noal}.

Let ${\mathcal M}(G)$ be the set of isomorphism classes of algebraic simple $G$-mo\-du\-les. If $L$ is an algebraic  $G$-module, denote by $L_{\mu}$ its isotypic component of type  $\mu\in {\mathcal M}(G)$.

Since $G$ is reductive, we have (see\;\cite[3.13]{PV})
\begin{equation}\label{decomp}
k[X]=\bigoplus_{\mu\in {\mathcal M}(G)} k[X]_\mu
\end{equation}

The group $N_{\Aut X}(G)$ permutes the isotypic components of the $G$-module $k[X]$.

Since $k[X]_\mu$ is a finitely generated $k[X]^G$-module (see\,\cite[Thm. 3.24]{PV}),
\eqref{=k} implies that
\begin{equation}\label{fin}
\dim_k k[X]_\mu<\infty\quad \mbox{for every $\mu$}.
\end{equation}

In view of \eqref{decomp}, there are elements $\mu_1,\ldots, \mu_s\in {\mathcal M}(G)$ such that
\begin{equation}\label{mui}
k[X]_{\mu_i}\neq 0\;\;\mbox{ for all $i$,}\quad\mbox{and}\quad
f\in k[X]_{\mu_1}\oplus\cdots\oplus  k[X]_{\mu_s}.
\end{equation}

(i) Assume that in $X$ there is a $G$-fixed point $a$. Since closed orbits are separated by  $G$-invariant regular functions (see\,\cite[Thm.\,4.7]{PV}), it follows from \eqref{=k} that
there are no other $G$-fixed points in $X$. Hence $a$ is fixed by $N_{\Aut X}(G)$ as well. Therefore, the ideal
\begin{equation*}
\mathfrak m_a:=\{f\in k[X]\mid f(a)=0\}
\end{equation*}
is $N_{\Aut X}(G)$-invariant. Hence every member of the decreasing  filtration
\begin{equation}\label{filt}
\mathfrak m_a\supset
\cdots\supset \mathfrak m_a^d\supset\mathfrak m_a^{d+1}\supset\cdots.
\end{equation}
is $N_{\Aut X}(G)$-invariant. This filtration has the property (see\;\cite[Cor. 10.18]{AM}) that
\begin{equation}\label{zero}
\bigcap_d \mathfrak m_a^d=0.
\end{equation}

In view of \eqref{filt}, we have a decreasing system of
nested linear subspaces
 $\{k[X]_\mu\cap \mathfrak m_a^d\mid d=1,2,\ldots\}$. Since they are finite dimensional
 (see \eqref{fin}), there is $d_\mu$ such that
$k[X]_\mu\cap \mathfrak m_a^d=k[X]_\mu\cap \mathfrak m_a^{d+1}$ for every $d\geqslant d_\mu$. From \eqref{zero} it then follows that, in fact,
\begin{equation}\label{dmu}
k[X]_\mu\cap \mathfrak m_a^d=0\quad \mbox{for every $d\geqslant d_\mu$.}
\end{equation}

 Let $l\in\mathbf Z$, $l\geqslant \max\{d_{\mu_1},\ldots, d_{\mu_s}\}$. From \eqref{dmu} it follows that
\begin{equation}\label{lxxxx}
 k[X]_{\mu_i}\cap \mathfrak m_a^l=0
\quad \mbox{for every $i=1,\ldots, s$}.
\end{equation}

Since the natural projection $\pi\colon k[X]\to k[X]/\mathfrak m_a^l$
is an epimorphism of $G$-modules, we have $\pi(k[X]_\mu)=(k[X]/\mathfrak m_a^l)_\mu$
for every $\mu\in \mathcal M(G)$. Whereas
\begin{equation*}
\dim_k k[X]/\mathfrak m_a^l<\infty,
\end{equation*}
(see \cite[Prop.\,11.4]{AM}), this implies finiteness of the set of $\mu\in \mathcal M(G)$ such that
\begin{equation}\label{all}
 k[X]_{\mu}\neq 0\quad\mbox{и}\quad k[X]_{\mu}\cap \mathfrak m_a^l=0.
\end{equation}
 Let $\{\lambda_1,\ldots,\lambda_t\}$ be this set. Since  \eqref{all} holds for  $\mu=\mu_1,\ldots,\mu_s$ (see \eqref{mui}, \eqref{lxxxx}), we may assume that
\begin{equation}\label{lm}
\lambda_i=\mu_i\quad \mbox{при $i=1,\ldots, s$}.
\end{equation}
Whereas the group $N_{\Aut X}(G)$ permutes the isotypic components of the $G$-module $k[X]$ and sends  $\mathfrak m_a^l$ into itself, it permutes $k[X]_{\lambda_1},\ldots, k[X]_{\lambda_t}$.\;Hence
$k[X]_{\lambda_1}\oplus\cdots\oplus k[X]_{\lambda_t}$ is an
$N_{\Autn}(G)$-invariant
subspace of  $k[X]$.
 In views of \eqref{fin}, \eqref{mui}, and \eqref{lm}, it is finite dimensional and contains $f$. Hence the $k$-linear span of the orbit $N_{\Aut X}(G)\cdot f$ is finite dimensional,
 as claimed.

(ii) Let  $G^0$ be semisimple. From the Weyl formula for dimension of simple $G^0$-module and finiteness of the index $[G\,{:}\,G^0]$ it follows that, up to iso\-mor\-phism, there are only finitely many
algebraic simple $G$-modules whose dimension does not exceed any preassigned constant. This implies finiteness of the set of $\mu\in \mathcal M(G)$ such that
\begin{equation*}
k[X]_\mu\neq 0 \quad\mbox{и}\quad \dim_k k[X]_\mu \leqslant \max_i \dim_k k[X]_{\mu_i}.
\end{equation*}
Let $\{\lambda_1,\ldots,\lambda_t\}$ be this set.\,We may assume that \eqref{lm} holds.\,Since $N_{\Aut X}(G)$, permuting isotypic components, preserves their dimensions, $k[X]_{\lambda_1}\oplus\cdots\oplus k[X]_{\lambda_t}$ is invariant with respect to $N_{\Aut X}(G)$. The proof can be now comple\-ted as in case (i).
\quad $\square$ \renewcommand{\qed}{}\end{proof}

\begin{corollary}\label{sl} Let  $X$ and $G$ be the same as in Theorem {\rm \ref{fc}} and let $k=\mathbf C$. Assume that the variety $X$ is simply connected and smooth, and $\chi(X)=1$. Then
 $N_{\Aut X} (G)$ is an algebraic subgroup of $\Aut X$.
\end{corollary}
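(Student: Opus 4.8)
The plan is to deduce the statement from Theorem \ref{fc}(i) by exhibiting a $G$-fixed point in $X$. Since $G$ is reductive and $k[X]^G=k$, the categorical quotient $X/\!\!/G=\operatorname{Spec} k[X]^G$ consists of a single point, so $X$ contains a unique closed $G$-orbit $O$. Choose $x_0\in O$ and put $H:=G_{x_0}$; the closed orbit $O\cong G/H$ being an affine variety, $H$ is reductive. It therefore suffices to prove that $O$ is a single point, equivalently that $H=G$: then $x_0$ is a $G$-fixed point and Theorem \ref{fc}(i) applies.

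First I would apply Luna's étale slice theorem at $x_0$ in its strong form. Since $G$ is reductive and $X$ is smooth, there is a smooth irreducible affine $H$-variety $S$ (a slice through $x_0$) such that the canonical $G$-morphism $G\times_H S\to X$ is strongly étale; in particular the square relating it to the quotient morphism $S/\!\!/H\to X/\!\!/G$ is Cartesian. Because $X/\!\!/G$ is a point and $S$ is irreducible — so that $S/\!\!/H$, being connected and finite étale over a point, is again a point — this Cartesian square yields a $G$-equivariant isomorphism $X\cong G\times_H S$. Hence the projection $G\times_H S\to G/H\cong O$ realizes $X$ as a locally trivial fibre bundle over $O$ with fibre $S$ (for $k=\mathbf C$ this is locally trivial in the classical topology, $G(\mathbf C)\to O(\mathbf C)$ being a principal $H(\mathbf C)$-bundle).

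From this bundle picture I would read off the structure of $O$. As $X$ is connected and maps onto $O$, the orbit $O$ is connected, hence $O=G^0\!\cdot x_0=G^0/H'$ with $H':=G^0\cap H$, which is again reductive (the $G^0$-orbit $O$ is closed). The homotopy exact sequence of the fibration $S\hookrightarrow X\to O$ gives a surjection $\pi_1(X)\to\pi_1(O)$, so $\pi_1(O)=1$; inserting this into the homotopy sequence of $H'\hookrightarrow G^0\to O$ and using $\pi_0(G^0)=1$ shows that $H'$ is connected. Since $\chi$ is multiplicative in fibre bundles, $1=\chi(X)=\chi(O)\cdot\chi(S)$; and $\chi(O)=\chi(G^0/H')\geqslant 0$ (Euler characteristics of homogeneous spaces of connected reductive groups are nonnegative, being those of $K/K'$ for nested maximal compact subgroups $K'\subseteq K$). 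Therefore $\chi(O)=1$.

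It remains to conclude that $O$ is a point. Since $\chi(G^0/H')=1\neq 0$, the connected reductive groups $H'\subseteq G^0$ have equal rank; choosing a common maximal torus $T$, one has $\chi(G^0/H')=[W_{G^0}:W_{H'}]$, whence $W_{H'}=W_{G^0}$ as subgroups of $\operatorname{Aut}({\rm X}(T)\otimes\mathbf Q)$. Because the reflections of a Weyl group are precisely the reflections attached to its roots and the root systems of reductive groups are reduced, this equality forces the root systems $R(H',T)$ and $R(G^0,T)$ to coincide, so $\operatorname{Lie}H'=\operatorname{Lie}G^0$ and $H'=G^0$. Thus $O=G^0/H'$ is a point, $H=G$, and Theorem \ref{fc}(i) finishes the proof. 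I expect the main obstacle to be the first step: extracting the global isomorphism $X\cong G\times_H S$ from $k[X]^G=k$ and the smoothness of $X$ via the strong form of Luna's theorem. Once $X$ is a bundle over $O$, the hypotheses $\pi_1(X)=1$ and $\chi(X)=1$ — the only places where $k=\mathbf C$ and the simple connectedness and smoothness of $X$ are used — feed in directly, and the concluding rank and Weyl-group computation is classical.
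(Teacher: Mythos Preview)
Your argument is correct and follows essentially the same route as the paper: use Luna's slice theorem (together with $k[X]^G=k$ and smoothness) to exhibit $X$ as a bundle $G\times_H S$ over the unique closed orbit $\mathcal O=G/H$, transfer the topological hypotheses to $\mathcal O$, and conclude $\mathcal O$ is a point so that Theorem~\ref{fc}(i) applies. The paper invokes Luna's corollary directly to get $X$ as a homogeneous \emph{vector} bundle over $\mathcal O$, so that $\chi(S)=1$ and $\chi(\mathcal O)=\chi(X)=1$ immediately; your detour through $\chi(\mathcal O)\geqslant 0$ works but is unnecessary once you note the slice is linear. For the implication ``$\pi_1(G/H)=1$ and $\chi(G/H)=1$ $\Rightarrow$ $G=H$'' the paper simply cites \cite[5.1]{KP}, whereas you reproduce that argument (equal rank from $\chi\neq 0$, then $|W_{G^0}|=|W_{H'}|$, then equal root systems); this is a faithful expansion of the cited reference rather than a different idea.
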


\begin{proof}
By corollary of theorem on \'etale slice (see \cite[III, Cor.\,2]{Luna} and \cite[Thm.\,6.7]{PV}), condition \eqref{=k} and smoothness of $X$ imply that $X$ is a homogeneous vector bundle over the unique closed $G$-orbit $\mathcal O$ in $X$. Hence $\mathcal O$ is simply connected and $\chi(X)=\chi (\mathcal O)$. Being affine, $\mathcal O$ is isomorphic to $G/H$ for some reductive subgroup
 $H$ (see \cite[Thm.\;4.17]{PV}).
 The conditions that $G/H$ is simply connected and $\chi(G/H)=1$ imply that
 $G=H$
(see\,\cite[5.1]{KP}).
 Hence $\mathcal O$ is a fixed point. The claim now follows from Theorem \ref{fc}(i).
\quad $\square$ \renewcommand{\qed}{}\end{proof}

\begin{corollary}\label{sll} Let $G$ be an reductive algebraic subgroups of  $\Autn$ and
$k[\An]^G=k$. Then $N_{\Autn}(G)$ is the algebraic subgroup of $\Autn$.
\end{corollary}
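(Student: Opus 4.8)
The plan is to read Corollary \ref{sll} off Theorem \ref{fc} applied to the variety $X=\An$. The first observation is that for the affine variety $\An$ the group $\Aut\An$ of its variety automorphisms coincides with $\Autn$ (an automorphism of $\mathbf A^n$ and its inverse are given by polynomials), so $\Autn$, $N_{\Autn}(G)$ and $k[\An]^G$ are precisely the objects $\Aut X$, $N_{\Aut X}(G)$, $k[X]^G$ of Theorem \ref{fc}, and $\An$ is an irreducible affine variety; it therefore suffices to check that $G$ satisfies one of the two hypotheses (i), (ii) of that theorem. Since $G^0$ need not be semisimple (e.g.\ $G$ may be a torus), I cannot invoke case (ii), so the real task is to produce a $G$-fixed point in $\An$, i.e.\ to land in case (i).

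To do this I would first use the hypothesis $k[\An]^G=k$: it says that the categorical quotient ${\rm Spec}\,k[\An]^G$ is a point, hence $G$ has a \emph{unique} closed orbit $\mathcal O$ in $\An$ (distinct closed orbits are separated by invariants, cf.\ \cite[Thm.\,4.7]{PV}). Next, since $\An$ is smooth, Luna's \'etale slice theorem — exactly as used in the proof of Corollary \ref{sl} — shows that $\An$ is $G$-equivariantly a homogeneous vector bundle over $\mathcal O$, and $\mathcal O\cong G/H$ for a reductive subgroup $H=G_x$. It remains to see that $\mathcal O$ is a single point, equivalently $G=H$; then $\mathcal O$ is a $G$-fixed point, Theorem \ref{fc}(i) applies, and $N_{\Autn}(G)$ is an algebraic subgroup of $\Autn$. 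The triviality of $\mathcal O$ I would argue as in Corollary \ref{sl}: the zero section realizes $\mathcal O$ as a retract of $\An$, so $\mathcal O$ is simply connected and $\chi(\mathcal O)=\chi(\An)=1$ (for $\chi$ one may use the $\ell$-adic Euler characteristic and the multiplicativity $\chi(\An)=\chi(\mathcal O)\chi(\mathbf A^{\dim W})$), and then $G/H$ simply connected with $\chi(G/H)=1$ forces $G=H$ by \cite[5.1]{KP}. Over a general algebraically closed field of characteristic zero, where the topological fundamental group is unavailable, one replaces $\pi_1$ by its \'etale analogue, or simply notes that the assertion ``$G\times_H W\cong \mathbf A^n$ as varieties $\Rightarrow$ $G=H$'' is defined over a finitely generated subfield and descends from the case $k=\mathbf C$ by the Lefschetz principle.

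The main obstacle is exactly this fixed-point step — the triviality of the unique closed orbit $\mathcal O$. Everything else (identifying $\Aut\An$ with $\Autn$, extracting the unique closed orbit from $k[\An]^G=k$, and the final appeal to Theorem \ref{fc}(i)) is formal; the geometric content sits in Luna's slice theorem together with the contractibility/$\chi=1$ properties of affine $n$-space, which is where the structure of $\An$ genuinely enters.
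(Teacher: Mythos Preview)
Your proposal is correct and follows essentially the same route as the paper: the paper simply invokes the Lefschetz principle to reduce to $k=\mathbf C$ and then cites Corollary \ref{sl} directly, whereas you unpack the proof of Corollary \ref{sl} (unique closed orbit, Luna's slice, $\chi=1$ and simple connectedness forcing $G=H$, hence a fixed point, hence Theorem \ref{fc}(i)) inline. The only difference is packaging, not content.
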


\begin{proof} In view of ${\rm char}\,k=0$, by the Lefschetz principle \cite[15.1]{Har} we may assume that $k=\mathbf C$. Since
$\An$ is simply connected and smooth, and  $\chi(\An)=1$,
the claim follows from Corollary \ref{sl}.
\quad $\square$ \renewcommand{\qed}{}\end{proof}

\begin{remark} {\rm The following example shows that condition
\eqref{=k} alone (for irre\-du\-cible affine  $X$ and reductive и  $G$) does not, in general,
imply that
 $N\!_{\Aut X}\!(G)$ is algebraic.}

\begin{example} Let $G$ be an algebraic torus of dimension $n\geqslant 2$. Take as  $X$ the underlying variety of the algebraic group $G$. Its group of automorphisms $\Aut\!_{\rm gr} G$ is embedded in $\Aut X$ and is isomorphic to $\GL_n(\mathbf Z)$. The action of $G$ on $X$ by left translations embeds $G$ in $\Aut X$. These two subgroups generate $\Aut X$, more precisely, $\Aut X=\Aut_{\rm gr} G \ltimes G$. Therefore, $N_{\Aut X}(G)=\Aut X$. Let $g\in \Aut_{\rm gr} G$ be an element of infinite order and let  $f_1,\ldots, f_n\in k[X]$ be a base of ${\rm X}(G)$. Then $g^d\cdot  f_i\in {\rm X}(G)$ for every $d\in \mathbf Z$ and $i=1,\ldots,n$, and the set $C_i:=\{ g^d\cdot  f_i \mid i\in\mathbf Z\}$ is finite if and only if the stabilizer of $f_i$ with respect to the cyclic group generated by $g$ is nontrivial. Assume that all $C_1,\ldots, C_n$ are finite. Then there is  $d\in \mathbf Z$, $d\neq 0$, such that $g^d\cdot  f_i=f_i$ for every
$i=1,\ldots, n$. Since $f_1,\ldots, f_n$ is a base in ${\rm X}(G)$, this means that the autormorphism $g^d$ is trivial contrary to the assumption that the order of  $g$ is infinite and $d\neq 0$. Hence $C_i$ is infinite for some $i$. Since different characters are linear independent over $k$ (see \cite[Lemma 8.1]{Borel}), this implies that the $k$-linear span of the set $C_i$ (and hence of the orbit $N_{\Aut X}(G)\cdot f_i$) is infinite dimensional.
\end{example}
\end{remark}

We shall now use the obtained information in the proof of algebraicity of the groups  $N_{\Autn}(D_n(l_1,\ldots,l_n))$.

\begin{theorem}\label{normal}
$N_{\Autn}(D_n(l_1,\ldots,l_n))$ for every $l_1,\ldots,l_n$ is an algebraic sub\-group of  $\Autn$.
Moreover,
\begin{enumerate}[\hskip 2.2mm\rm(i)]
\item $N_{\Autn}(D_n)=N_{\GL_n}(D_n)$.
\item If $(l_1,\ldots,l_n)\neq (0,\ldots,0)$, then
\begin{equation}\label{subs}
N_{\Autn}(D_n(l_1,\ldots,l_n))\subseteq N_{\GL_n}(D_n)
 \end{equation}
in either of the following cases:
\begin{enumerate}[\hskip 2.2mm\rm(a)]
\item  all the numbers $l_1,\ldots, l_n$ are nonzero and have the same sign;
\item   none of the numbers $l_1,\ldots, l_n$ is equal to $\pm 1$;
\item the numbers $l_1,\ldots, l_n$ contain $0$ and $\pm 1$, at least two of them are nonzero, and all nonzero of them have the same sign.
\end{enumerate}
\item If\;$\;l_i=1$ and $l_j=0$ for $j\neq i$, then $N_{\Autn}(D_n(l_1,\ldots,l_n))$
is isomorphic to $N_{\GL_{n-1}}(D_{n-1})\times {\rm Aff}_1$ and consists of all
$(g_1,\ldots, g_n)\in {\rm Aff}_n$ of the form
\begin{equation}\label{cases}
g_j=\begin{cases} t_jx_{\sigma(j)} & \mbox{\it при $j\neq i$};\\
t_jx_j+s & \mbox{\it при $j= i$,}
\end{cases}
\end{equation}
where  $t_1,\ldots, t_n, s\!\in\! k$ and $\sigma$ is a permutation of the set  $\{1,\!\ldots\!, i-1, i+1,\!\ldots\!, n\}$.
\end{enumerate}
\end{theorem}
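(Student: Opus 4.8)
The plan is to show that in every case an element $g=(g_1,\dots,g_n)$ of $N:=N_{\Autn}(D_n(l_1,\dots,l_n))$ must permute the coordinate hyperplanes $H_1,\dots,H_n$ (and, for~(iii), the parallel pencil $\{H(s)\}$), and then to invoke the following elementary mechanism. If $g\in\Autn$ and $g(H_i)=H_{\sigma(i)}$ for a permutation $\sigma$, then the zero set of the polynomial $g_{\sigma(i)}$ equals $g^{-1}(H_{\sigma(i)})=H_i$, which is the irreducible hypersurface $\{x_i=0\}$; hence $g_{\sigma(i)}=c_ix_i^{m_i}$ with $c_i\in k\setminus\{0\}$, $m_i\geqslant1$, by unique factorization in $k[\An]$. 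Since $g^*$ is an automorphism, $g_1,\dots,g_n$ generate $k[\An]$, which forces every $m_i=1$; thus $g\in N_{\GL_n}(D_n)$, and then $N=N_{\GL_n}(D_n(l_1,\dots,l_n))$ is closed in $\GL_n$, in particular algebraic.

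First I would dispose of the general algebraicity claim by cases. For $(l_1,\dots,l_n)=(0,\dots,0)$ I prove the sharper~(i) directly: $g\in N_{\Autn}(D_n)$ permutes the one-dimensional $D_n$-weight spaces $kx_1^{m_1}\cdots x_n^{m_n}$, $(m_1,\dots,m_n)\in\mathbf Z_{\geqslant0}^n$, of $k[\An]$, hence induces an automorphism $\tau$ of ${\rm X}(D_n)=\mathbf Z^n$ with $\tau(\mathbf Z_{\geqslant0}^n)=\mathbf Z_{\geqslant0}^n$ (applying the same to $g^{-1}$), so $\tau$ is a permutation matrix, $g$ is a monomial linear map, and $N_{\Autn}(D_n)=N_{\GL_n}(D_n)$. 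If $l_1,\dots,l_n$ are not all of the same sign (with $0$ counting as either sign), then no nonzero vector of $\mathbf Z_{\geqslant0}^n$ is proportional to $(l_1,\dots,l_n)$, so $k[\An]^G=k$ and $N$ is algebraic by Corollary~\ref{sll}. In all remaining cases $N$ will be shown to lie in $N_{\GL_n}(D_n)$ (part~(ii)) or to equal $N_{\GL_{n-1}}(D_{n-1})\times\Aff_1$ (part~(iii)), so algebraicity follows; thus everything reduces to~(ii) and~(iii).

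For~(ii) I treat the three cases separately, the common idea being to recover $\bigcup_{i=1}^nH_i$ together with its components $H_1,\dots,H_n$ from the $G$-action in a way $g$ must respect; the first paragraph then finishes. In case~(a) the corollary following Lemma~\ref{stable} says the non-closed $(n-1)$-dimensional $G$-orbits are exactly $\mathcal O_1,\dots,\mathcal O_n$; $g$ permutes these, hence permutes their closures $H_1,\dots,H_n$. In case~(b), Lemma~\ref{npm} identifies $\bigcup_iH_i$ with the set of points having non-trivial $G$-stabilizer, and $g$ preserves this set since $G_{g\cdot a}=gG_ag^{-1}$. Case~(c) is the delicate one. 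After conjugating by an element of $N_{\GL_n}(D_n)$ (which transfers the conclusion back) I assume the normal form~\eqref{lxxx}, so $k[\An]^G=k[u]$ with $u=x_1\cdots x_p\,x_{p+1}^{l_{p+1}}\cdots x_q^{l_q}$. Then $N$ acts on the line $k[u]\cong k[{\mathbf A}^1]$, hence permutes the fibres of $u\colon\An\to{\mathbf A}^1$; because $p\geqslant1$ the fibre over any $c\neq0$ is irreducible (a polynomial of degree $1$ in $x_1$), whereas the fibre over $0$ is $H_1\cup\dots\cup H_q$, which (as $q\geqslant2$) is the unique reducible fibre, so $g$ preserves it and permutes $H_1,\dots,H_q$. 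Separately, the locus $Z$ of points with positive-dimensional $G$-stabilizer is closed (upper semicontinuity) and $g$-invariant, and by Lemma~\ref{10}(ii) together with Lemma~\ref{op} its $(n-1)$-dimensional irreducible components are precisely $H_{q+1},\dots,H_n$ (any other $(n-1)$-dimensional irreducible $C\subseteq Z$ would have to meet $\An\setminus\bigcup_iH_i$, where stabilizers are trivial—contradiction). So $g$ permutes $H_{q+1},\dots,H_n$, hence all of $H_1,\dots,H_n$, and~(ii)(c) follows.

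For~(iii), after replacing $(l_1,\dots,l_n)$ by its negative if needed (this leaves $G$ unchanged by Corollary~\ref{cod1}(iii)) I take $l_i=1$, $l_j=0$ for $j\neq i$. By Lemma~\ref{11} the non-trivial-stabilizer locus is $\bigcup_{j\neq i}H_j$, so $g$ permutes $\{H_j:j\neq i\}$; and the $(n-1)$-dimensional $G$-orbits are exactly the sets $H(s)\setminus\bigcup_{j\neq i}H_j$, with closures $H(s)=\{x_i=-s\}$, $s\in k$, so $g$ permutes the pencil $\{H(s)\}$. The first statement gives $g_j=c_jx_{\rho(j)}^{m_j}$ for $j\neq i$, while the second forces every fibre of the polynomial $g_i$ to be one of the hyperplanes $\{x_i=\mathrm{const}\}$, whence $g_i$ depends only on $x_i$ and has degree $1$ there, i.e.\ $g_i=\alpha x_i+\beta$, $\alpha\neq0$. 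Since $g_1,\dots,g_n$ generate $k[\An]$, all $m_j=1$, which is exactly the form~\eqref{cases}. Conversely, every $g$ of that form factors as $\tau\circ m$ with $m\in N_{\GL_n}(D_n)$ monomial whose underlying permutation fixes $i$ (so $m$ normalizes $G$) and $\tau$ the translation $x_i\mapsto x_i+s$ (which one checks centralizes $G$), so $g\in N$. The group of all such $g$ is the direct product of the monomial part acting on the $n-1$ coordinates $\neq i$, namely $N_{\GL_{n-1}}(D_{n-1})$, with the $\Aff_1$ acting on the $i$-th coordinate, and in particular is algebraic. I expect case~(c) to be the main obstacle: one has to extract, in an $N$-invariant manner, enough subvarieties—the unique reducible fibre of the $G$-quotient and the top-dimensional part of the large-stabilizer locus $Z$—to recover all $n$ coordinate hyperplanes, which uses the full orbit/stabilizer analysis of Section~4, especially Lemma~\ref{10}.
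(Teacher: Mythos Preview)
Your proof is correct and follows the same overall architecture as the paper's: recover the coordinate hyperplanes $H_i$ from $G$-equivariant geometric data that any $g\in N$ must preserve, then conclude that $g$ is monomial. There are, however, three places where you take a genuinely different route.

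For (i), you argue via the character lattice: conjugation by $g$ induces an automorphism of ${\rm X}(D_n)=\mathbf Z^n$ preserving the monoid $\mathbf Z_{\geqslant0}^n$ of weights occurring in $k[\An]$, hence a permutation matrix. The paper instead applies the same geometric template used later: $\bigcup_i H_i$ is the locus of points with positive-dimensional $D_n$-stabilizer (Lemma~\ref{op}(i)), so $g$ permutes its irreducible components $H_i$. Your argument is slightly slicker; the paper's is more uniform with the rest of the proof.

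For (ii)(c) the difference is more substantial. After the same identification of $H_{q+1},\dots,H_n$ via the positive-dimensional stabilizer locus, the paper separates the remaining hyperplanes into two further batches using finer orbit structure: $\mathcal O_{p+1},\dots,\mathcal O_q$ are the orbits with finite \emph{nontrivial} stabilizer (Lemma~\ref{10}(ii)(b)), while $\mathcal O_1,\dots,\mathcal O_p$ are distinguished among trivially-stabilized orbits by having the fixed point $(0,\dots,0)$, rather than a $(q-1)$-dimensional orbit, as the closed orbit in their closure (Lemma~\ref{10}(i)). You bypass this three-way split entirely by computing $k[\An]^G=k[u]$ with $u=x_1\cdots x_p\,x_{p+1}^{l_{p+1}}\cdots x_q^{l_q}$ and observing that $\{u=0\}=H_1\cup\cdots\cup H_q$ is the unique reducible fibre of $u$ (using $p\geqslant 1$ for irreducibility of the generic fibre and $q\geqslant 2$ for reducibility of the zero fibre). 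This is more direct and avoids Lemma~\ref{10}(i) altogether.

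Finally, for the ``mixed signs'' case you deduce $k[\An]^G=k$ directly from the fact that no nonzero integer multiple of $(l_1,\dots,l_n)$ lies in $\mathbf Z_{\geqslant0}^n$, whereas the paper reaches the same conclusion via Lemma~\ref{clozero} (every orbit has $0$ in its closure). Both feed into Corollary~\ref{sll} the same way.
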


\begin{proof}
If $G$ is a subgroup of $\Autn$ and $g\in N_{\Autn}(G)$, $a\in\An$, then
\begin{equation}\label{orb}
\begin{gathered}
g(G\cdot a)=G\cdot g(a),\quad gG_ag^{-1}=G_{g(a)},\quad g(\overline{G\cdot a})=\overline{g(G\cdot a)},\quad\mbox{and}\\[-2pt]
 \mbox{if $G$ is algebraic,}\quad \dim G\cdot a=\dim g(G\cdot a).
\end{gathered}
\end{equation}

Let $G=D_n$.
Lemma \ref{op}(i) implies that $\bigcup_{i=1}^nH_i$ is the set of points whose $G$-stabilizer has positive dimension. By \eqref{orb}, it is $g$-invariant.\;Since the restriction of  $g$ to the variety $\bigcup_{i=1}^nH_i$ is its automorphism, $g$ permutes  its irreducible components  $H_1,\ldots, H_n$,
i.e., there is a permutation
 $\sigma\in S_n$ such that
\begin{equation}\label{sH}
g(H_i)=H_{\sigma(i)}\quad\mbox{for every $i=1,\ldots, n$.}
\end{equation}
 Since the ideal in $k[\An]$ determined by $H_i$ is generated by $x_i$, this shows that the polynomial $g^*(x_i)$ divides $x_{\sigma(i)}$, hence $g^*(x_i)=t_i x_{\sigma(i)}$ for some
nonzero $t_i\in k$. Therefore, $g\in N_{\GL_n}(D_n)$ (see \eqref{g*}, \eqref{torusDn}). This proves\;(i).

Further, let $G=D_n(l_1,\ldots,l_n)$, $(l_1,\ldots,l_n)\neq (0,\ldots, 0)$.

Assume that condition (a) holds. Then \eqref{orb} and Corollary of Lemma \ref{stable} imply that  $g$ permutes   the orbits $\mathcal O_1,\ldots \mathcal O_n$, i.e., there exists a permutation $\sigma\in S_n$ such that $g(\mathcal O_i)=\mathcal O_{\sigma(i)}$ for every $i$. In view of Lemma \ref{i0} and \eqref{orb}, this implies that $g$ has property \eqref{sH}, and as is shown above, the latter implies that $g\in N_{\GL_n}(D_n)$. Thus, \eqref{subs} is proved in the case when condition (a) holds.

Assume that condition (b) holds.\;Then
 $g$-invariance of $\bigcup_{i=1}^nH_i$ follows from \eqref{orb} and Lemma \ref{npm}. Now the same argument as for
$G=D_n$ completes the proof of \eqref{subs} in the case when condition (b) holds.

 Assume that condition (c) holds.\;Proving \eqref{subs}, we may replace $G$ by the group conjugate to $G$ by means of an appropriate element of  $N_{\GL_n}(D_n)$ and assume that \eqref{lxxx} holds.\;The set
$\{a\in\An\mid \dim G_a>0\}$ is closed (see\,\cite[1.4]{PV}).\;By virtue of \eqref{orb}, it is $g$-invariant, and Lemmas \ref{op} and \ref{10}(ii) imply that its $(n-1)$-dimensional irreducible components  are $H_{q+1},\ldots\break\ldots, H_n$. Hence $g$ permutes
$H_{q+1},\ldots, H_n$. Further, it follows from Lemmas \ref{op} and \ref{10}(ii) that $\mathcal O_{p+1},\ldots, \mathcal O_{q}$ are all the $G$-orbits $\mathcal O$ in $\An$ such that $G_a$ is finite and nontrivial for $a\in\mathcal O$. From \eqref{orb} it then follows that
$g$ permutes   the orbits $\mathcal O_{p+1},\ldots, \mathcal O_{q}$ and, therefore, permutes their closures  $H_{p+1},\ldots,  H_{q}$. Finally, in view of Lemmas  \ref{op} and  \ref{10}(ii), all the  $G$-orbits $\mathcal O$ in $\An$ such that $G_a$ is trivial for $a\in \mathcal O$ are exhausted by the orbits  $\mathcal O_1,\ldots, \mathcal O_p$  and  $G\cdot a$,
where $a\notin \bigcup_{i=1}^nH_i$. Since $G$ is reductive, every  $G$-orbit in  $\An$ contains in its closure a unique closed  $G$-orbit (see\,\cite[Cor.\,p.\,189]{PV}). By Lemma \ref{i0}, for each of the orbits $\mathcal O_1,\ldots, \mathcal O_p$ this closed orbit is the fixed point  $(0,\ldots,0)$.
On the other hand, by Lemma  \ref{10}(i),  the closed orbit lying in $\overline{G\cdot a}$
for $a\notin \bigcup_{i=1}^nH_i$ has dimension $q-1\geqslant 1$ and, therefore, is not the fixed point. Hence $g$ permutes $\mathcal O_1,\ldots, \mathcal O_p$, and therefore, permutes   their closures $H_1,\ldots, H_p$. Thereby, it is proved that \eqref{sH} holds for a certain permutation $\sigma\in S_n$. As above, this allows to conclude that \eqref{subs} holds in case (c). Thus,  (ii) is proved.

Assume now that  $l_i=1$ and $l_j=0$ for $j\neq i$. From \eqref{orb} and Lemma \ref{11}(i) it follows that the closed set $\bigcup_{j\neq i}H_j$ is invariant with respect to $g$. Hence $g$ permutes its irreducible components
$H_1,\ldots, H_{i-1}, H_{i+1},\ldots, H_n$. As above, from here we conclude that for $j\neq i$ the equality
$g_j=t_jx_{\sigma(j)}$  holds for some  $t_j\in k$ and some permutation
$\sigma$ of the set $\{1,\ldots, i-1, i+1,\ldots, n\}$. Further,  \eqref{orb}
and Lemma \ref{11} imply that $g(\mathcal O_i)$ is an orbit open in a certain hyperplane  $H(c)$ (see the notation in Lemma \ref{11}). Since $\overline{\mathcal O_i}=H_i$, and $x_i$ and
$x_i+c$ are, respectively, the generators of the ideals of hyperplanes  $H_i$ and $H(c)$,
we conclude that $g_i=g^*(x_i)$ differs from $x_i+c$ only by a nonzero constant factor: $g_i=t_ix_i+s$ for some $t_i, s\in k$, $t_i\neq 0$. Thus,
$g$ is of the form \eqref{cases}. Conversely, clearly, every element $g\in\Autn$ of the form \eqref{cases} normalizes $G$.  This proves (iii).

Finally, let us prove the first claim of this theorem. According to
(i), (ii), and (iii), the group $N_{\Autn}(D_n(l_1,\ldots,l_n))$
is algebraic if either
 $(l_1,\ldots,l_n)=(0,\ldots, 0)$, or
$(l_1,\ldots,l_n)\neq (0,\ldots, 0)$ and
any of conditions
 (a), (b), (c) of statement (ii) or condition of statement  (iii) hold.\;The only case not covered by these conditions is that when among  $l_1,\ldots,l_n$ there are numbers with different signs.
However, it follows from Lemma \ref{clozero}  that in this case
there are no nonconstant $D_n(l_1,\ldots,l_n)$-invariant regular functions on
 $\An$. But then by Corollary \ref{sll} of Theorem
\ref{fc} we conclude that  $N_{\Autn}(D_n(l_1,\ldots,l_n))$ is algebraic.
\quad $\square$ \renewcommand{\qed}{}\end{proof}

\section{Fusion theorems for tori in $\Autn$ and $\Autsn$}

Fusion theorems describe subgroups that control fusion of subsets under conjugation. Namely, let
$G$ be an (abstract) groups and let  $H$ be its subgroup. We say that $N_G(H)$ controls fusion of subsets in $H$ under conjugation by the elements of  $G$,  if the following property holds:
\vskip 1mm
\begin{equation}
\ \label{PF}\tag*{(F)}
\end{equation}\
\vskip -23mm\
\begin{quote}
for any subset $S\subseteq H$ and an element $g\in G$ such that
$gSg^{-1}\subseteq H$, there is an element $w\in N_G(H)$, such that $gsg^{-1}=wsw^{-1}$
for every element $s\in S$.
\end{quote}

\vskip 2mm

If property (F) holds for a pair $(G, H)$, then we say that for $H$ in $G$ {\it fusion theorem} holds.
Notice that the action of   $N_G(H)$ on $H$ by conjugation boils down to the action of  the ``Weyl group'' $N_G(H)/Z_G(H)$.

\begin{examples} Fusion theorem for $H$ in $G$ holds in the following cases:

1. $G$ is a finite group and  $H$ is its Abelian Sylow  $p$-subgroup. It is the classical Burnside's result.

2. $G$ is an affine algebraic group and  $H$ is its maximal torus. It is the classical result of the theory of algebraic groups, see, e.g., \cite[1.1.1]{Serre1}.

3. $(G, H)=({\rm Cr}_n, D_n)$. It is Serre's result \cite[Thm.\,1.1]{Serre2}. Since every
 $n$-dimensional torus in  ${\rm Cr}_n$ is maximal and conjugate to $D_n$ (see\;Theorem \ref{max1}(i),(ii)), here
 one can replace
  $D_n$ by any  $n$-dimensional torus.
 Recall that for $n\geqslant 5$ in ${\rm Cr}_n$ there exist $(n-3)$-dimensional maximal tori
(see Corollary \ref{c9}(iii)).

 \begin{question}
 {\it Does fusion theorem hold if  $D_n$ is replaced by such a torus}{\rm?}
 \end{question}
\end{examples}

We shall now prove that fusion theorem holds for $n$-dimensional tori in $\Autn$ and $(n-1)$-dimensional tori in $\Autsn$.

\begin{lemma} \label{conju} For every element $g=(g_1,\ldots, g_n)\in \Autn$ there is an ele\-ment
  $g'\in{\rm SL}_n$ such that if
  $s, gsg^{-1}\in \GL_n$, then
 \begin{equation}\label{lin}
 gsg^{-1}=g's{g'}^{-1}.
 \end{equation}
\end{lemma}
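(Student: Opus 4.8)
The plan is to linearize $g$ by a birational change of coordinates that is, in fact, an automorphism of $\An$ with unit Jacobian, and then invoke the classical fusion theorem for the maximal torus $D_n$ inside $\GL_n$ (or rather inside $\SL_n$). First I would observe that the hypothesis $s \in \GL_n$ does not yet mean $s$ is a diagonalizable or even semisimple linear map in any fixed basis; but since we only need \eqref{lin} to hold for those $s$ with both $s$ and $gsg^{-1}$ linear, the real content is to replace the conjugating element $g \in \Autn$ by a \emph{linear} element $g' \in \SL_n$ that acts the same way on the relevant linear transformations. The natural candidate is the differential (linear part) of $g$ at a suitable point. Concretely, if $s \in \GL_n$ fixes the origin and $gsg^{-1} \in \GL_n$ also fixes the origin, then differentiating the identity $g \circ s = (gsg^{-1}) \circ g$ at $0$ gives $(d_0 g)\circ s = (gsg^{-1})\circ (d_{0}g)$, i.e. $gsg^{-1} = (d_0 g)\, s\, (d_0 g)^{-1}$ as linear maps. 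Here I use that $s(0)=0$ and $(gsg^{-1})(0)=0$, so that the chain rule applies at the single point $0$, and that $d_0 s = s$, $d_0(gsg^{-1}) = gsg^{-1}$ because these are already linear.

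The subtlety is that $g$ itself need not fix the origin, so $d_0 g$ is not quite the right object — one must be careful about the affine shift. I would handle this by noting that for $g = (g_1,\dots,g_n) \in \Autn$ the Jacobian matrix $J(x) := (\partial g_i/\partial x_j)(x)$ has constant nonzero determinant $\mathrm{Jac}(g)$, and I would set $g'$ to be the linear map with matrix $J(a)$ for a chosen point $a$, or more robustly with matrix $J(0)$ combined with the translation so that the resulting \emph{linear} (origin-fixing) map is $x \mapsto J(0)\,x$. Since $\det J(0) = \mathrm{Jac}(g)$, which equals $1$ when $g \in \Autsn$ and in general is a fixed nonzero scalar, I would either restrict attention to the case $\mathrm{Jac}(g)=1$ or rescale: because the claim only asks for \emph{some} $g' \in \SL_n$, and because $s$ and $gsg^{-1}$ lie in $\GL_n$, I can multiply $J(0)$ by a scalar $\det(J(0))^{-1/n}$ (the ground field is algebraically closed of characteristic zero, so $n$-th roots exist) to land in $\SL_n$ without affecting the conjugation action, since scalars are central in $\GL_n$. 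Thus set $g' \in \SL_n$ to be this rescaled linear map $x \mapsto c\, J(0)\, x$.

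It then remains to verify \eqref{lin}, i.e. $gsg^{-1} = g' s (g')^{-1}$, whenever $s, gsg^{-1} \in \GL_n$. The key step is the chain-rule computation: from $g \circ s = (gsg^{-1})\circ g$ in $\Autn$, differentiate and evaluate at an appropriate point (take $x=0$ after first arranging, via the observation that conjugation by a linear $s \in \GL_n$ commutes with the central scalar, that $s$ may be assumed to fix $0$ — indeed every element of $\GL_n$ as defined in the excerpt fixes $0$). One gets $J(s(0))\cdot s = \widetilde J(g(0))\cdot J(0)$ where $\widetilde J$ is the Jacobian of $gsg^{-1}$; but $gsg^{-1}$ is linear, so $\widetilde J$ is constant equal to the matrix of $gsg^{-1}$, and $s(0)=0$, giving $J(0)\, s = (gsg^{-1})\, J(0)$, whence $gsg^{-1} = J(0)\, s\, J(0)^{-1} = g' s (g')^{-1}$ after cancelling the central scalar $c$. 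The main obstacle I anticipate is the bookkeeping around base points and the scalar $c$: one must make sure that $g'$ depends only on $g$ (not on $s$), which is automatic since $J(0)$ does, and that the rescaling genuinely produces an element of $\SL_n$ while leaving the conjugation unchanged. Everything else is the standard chain rule, so the proof should be short once the setup is fixed.
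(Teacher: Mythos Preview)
Your proof is correct and follows essentially the same route as the paper: take the linear part of $g$ at the origin (the paper writes this as the degree-one homogeneous component $g^{(1)}=(g_1^{(1)},\dots,g_n^{(1)})$, which is exactly your $x\mapsto J(0)x$), use the chain rule on $g\circ s=(gsg^{-1})\circ g$ together with the facts $s(0)=0$ and $d_p(gsg^{-1})=gsg^{-1}$ to get $J(0)\,s=(gsg^{-1})\,J(0)$, and then rescale by an $n$th root of $\det J(0)^{-1}$ to land in $\SL_n$. Two minor remarks: your opening sentence about invoking the fusion theorem for $D_n$ in $\GL_n$ is a red herring (this lemma is a \emph{tool} for proving the fusion theorem, not a consequence of it), and your worry that ``$d_0g$ is not quite the right object'' because $g$ need not fix the origin is unfounded---the Jacobian at $0$ is perfectly well defined regardless, and the chain rule goes through verbatim since only $s$ and $gsg^{-1}$ need to fix $0$.
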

\begin{proof}
Let
$g_i=g_i^{(0)}+g_i^{(1)}+\ldots $\;, where $g_i^{(s)}$ is a form of degree $s$ in
$x_1, \ldots, x_n$. Since ${\rm Jac}(g)\in k$, we have
${\rm Jac}(g)=\det(\partial g_i/\partial x_j|^{}_{x_1=\ldots=x_n=0})$.
But the right-hand side of this equality is equal to
$\det(\partial g_i^{(1)}/\partial x_j)$. Hence
\begin{equation}\label{dg}
g^{(1)}:=\big(g_1^{(1)},\ldots, g_n^{(1)}\big)\in\GL_n.
\end{equation}
The automorphism $g^{(1)}$
is the differential of the automorphism
  $g$ at the point $(0,\ldots, 0)$. From \eqref{dg} it follows that
\begin{equation}\label{diffe}
\begin{split}
{\rm(i)}\; &g=g^{(1)}\;\mbox{ if $g\in\GL_n$; }\\[-2pt]
{\rm (ii)}\; &(ga)^{(1)}=g^{(1)}a \;\mbox{ and }\; (ag)^{(1)}=ag^{(1)}\;\mbox{ if $a\in\GL_n$. }
\end{split}
\end{equation}

Now let $s$  and $t:=gsg^{-1}$ be the elements of $\GL_n$. In view of \eqref{dg}, we have
$g^{(1)}\in\GL_n$, and
$gs=tg$ and \eqref{diffe} imply that
$g^{(1)}s=tg^{(1)}$. Therefore,
the product of  $g^{(1)}$ by a constant
$\alpha\in k$ such that $\alpha^n\det g^{(1)}=1$
can be taken as $g'$.\quad $\square$ \renewcommand{\qed}{}\end{proof}

\begin{theorem}\label{FFF} The following pairs $(G, H)$ have property {\rm (F):}
\begin{enumerate}[\hskip 4.2mm \rm (i)]
\item $(${\it Fusion theorem for $n$-dimensional tori in $\Autn$}$)$
\begin{equation*}
(G, H)=(\Autn,
\mbox{an $n$-dimensional torus}).
\end{equation*}
 \item $(${\it Fusion theorem for $(n-1)$-dimensional tori in $\Autsn$}$)$
 \begin{equation*}
(G, H)=(\Autsn, \mbox{an $(n-1)$-dimensional torus}).
\end{equation*}
\end{enumerate}
\end{theorem}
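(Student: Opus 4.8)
The plan is to reduce both statements to a single claim about conjugating subsets of the standard diagonal torus, and then to replace a \emph{linear} conjugator by a \emph{monomial} one using conjugacy of maximal tori. First, note that property (F) for a pair $(G,H)$ passes to $(G,hHh^{-1})$ for any $h\in G$: given $S'\subseteq hHh^{-1}$ and $g\in G$ with $gS'g^{-1}\subseteq hHh^{-1}$, apply (F) to $S:=h^{-1}S'h$ and to $h^{-1}gh$, and conjugate the resulting $w$ by $h$. By Theorem \ref{max2}(i) every $n$-dimensional torus in $\Autn$ is conjugate in $\Autn$ to $D_n$, and by Theorem \ref{max3}(ii) every $(n-1)$-dimensional torus in $\Autsn$ is conjugate in $\Autsn$ to $D_n^*$; hence it suffices to prove (F) for $(\Autn,D_n)$ and for $(\Autsn,D_n^*)$.

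The crucial input is Lemma \ref{conju}: for a fixed $g\in\Autn$ it produces a \emph{single} element $g'\in\SL_n$, independent of $s$, with $gsg^{-1}=g's g'^{-1}$ whenever $s,gsg^{-1}\in\GL_n$. So if $S\subseteq D_n$ and $gSg^{-1}\subseteq D_n$, then $g'sg'^{-1}=gsg^{-1}$ for every $s\in S$, and in particular $g'Sg'^{-1}\subseteq D_n$. It therefore remains to find $w\in N_{\GL_n}(D_n)$ in part (i), resp. $w\in N_{\GL_n}(D_n)\cap\SL_n$ in part (ii), with $g'^{-1}w\in Z_{\GL_n}(S)$; for then $wsw^{-1}=g's g'^{-1}=gsg^{-1}$ for all $s\in S$.

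To produce such a $w$, put $Z:=Z_{\GL_n}(S)$. Since a centralizer is Zariski closed, $Z=Z_{\GL_n}(\overline{\langle S\rangle})$, and as $\overline{\langle S\rangle}$ is a diagonalizable subgroup of $D_n$ its weight spaces on $k^n$ are coordinate subspaces; hence $Z$ is a direct product of general linear groups (one per weight space), so $Z$ is connected, ${\rm rk}\,Z=n$, and $D_n$ is a maximal torus of $Z$. From $g'Sg'^{-1}\subseteq D_n$ and commutativity of $D_n$ one checks that $g'^{-1}D_ng'\subseteq Z$, and being an $n$-dimensional torus it is then a maximal torus of $Z$. By conjugacy of maximal tori in the connected group $Z$ there is $z_0\in Z$ with $z_0^{-1}(g'^{-1}D_ng')z_0=D_n$, i.e. $g'z_0\in N_{\GL_n}(D_n)$; the admissible $z$ are exactly those in $z_0 N_Z(D_n)$, and since $D_n\subseteq N_Z(D_n)$ and $\det$ maps $D_n$ onto $\mathbf G_{\rm m}$, one may moreover prescribe $\det z$ arbitrarily.

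For part (i) take $w:=g'z$ for any admissible $z$: then $g'^{-1}w=z\in Z=Z_{\GL_n}(S)$, and $w\in N_{\GL_n}(D_n)=N_{\Autn}(D_n)$ by Theorem \ref{normal}(i), so $w$ does the job. For part (ii), $S\subseteq D_n^*\subseteq D_n$ and $g\in\Autsn\subseteq\Autn$, so the preceding applies; choose $z$ with $\det z=1$, so that $w:=g'z\in\SL_n$ because $g'\in\SL_n$. Then $w\in\SL_n\subseteq\Autsn$, a monomial transformation normalizes $D_n^*=D_n\cap\SL_n$, hence $w\in N_{\Autsn}(D_n^*)$, and $wsw^{-1}=gsg^{-1}$ for all $s\in S$ as above. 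The one genuine idea is that Lemma \ref{conju} linearizes the conjugation \emph{uniformly in $s$}; after that, the passage from a linear to a monomial conjugator is just conjugacy of maximal tori in the reductive centralizer $Z$, and the only point requiring care is the determinant adjustment needed to keep $w$ inside $\Autsn$ in part (ii), which is harmless since $\det$ is already surjective on $D_n\subseteq N_Z(D_n)$.
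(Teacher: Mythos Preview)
Your proof is correct and follows essentially the same route as the paper's: linearize the conjugation via Lemma~\ref{conju}, then pass from the linear conjugator $g'$ to a normalizing element $w=g'z$ by using conjugacy of the two maximal tori $D_n$ and $g'^{-1}D_ng'$ inside the centralizer $Z_{\GL_n}(S)$. The only cosmetic differences are that the paper does not bother to compute $Z$ explicitly (it just cites conjugacy of maximal tori in an arbitrary affine algebraic group), and for part~(ii) it works directly inside $\SL_n$ with $Z_{\SL_n}(S)$ rather than working in $\GL_n$ and adjusting the determinant afterward.
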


\begin{proof}
(i) Let $G=\Autn$, let $H$ be a torus, and $\dim H=n$.\;Then, by Theorem
  \ref{max2} we may assume that
$H$ is a maximal torus in $\GL_n$. Maintain the notation of the formulation of property \ref{PF}.
By Lemma \ref{conju} there exists an element $g'\in
\GL_n$ such that for every element $s\in S$ equality
 \eqref{lin} holds. Therefore,  $g'Sg'^{-1}=gSg^{-1}\subseteq H$. Since $g'\in\GL_n$, this shows that
\begin{equation}\label{T'}
H':=g'^{-1}Hg'
\end{equation}
is another maximal torus in $\GL_n$ containing $S$. The tori $H$ and $H'$ lie in the  (closed \cite[I.1.7]{Borel}) subgroup
 $Z_{\GL_n}(S)$ of the group $\GL_n$ and therefore, are maximal tori of $Z_{\GL_n}(S)$. In view of conjugacy of maximal tori in any affine algebraic group
\cite[IV.11.3]{Borel}, there is an element
\begin{equation}\label{zzz}
z\in Z_{\GL_n}(S),
\end{equation}
such that
\begin{equation}\label{TT}
H'=zHz^{-1}.
\end{equation}
 From \eqref{T'} and \eqref{TT} it follows that $w:=g'z\in
N_{\GL_n}(H)$, and from \eqref{zzz} and \eqref{lin} that
$gsg^{-1}=wsw^{-1}$ for every $s\in S$. This proves (i).

(ii)  The same argument applies in the case where $G=\Autsn$, $H$ is a torus, and $\dim H=n-1$: one only has to replace  $\GL_n$ by ${\rm SL}_n$, the reference to Theorem \ref{max2}
 by the reference to Theorem \ref{max3}, and notice that
by Lemma \ref{conju} the element $g'$ may be taken from ${\rm SL}_n$.
\quad $\square$ \renewcommand{\qed}{}\end{proof}

\section{Applications: The classifications of classes of conjugate subgroups}

In this section, using the obtained results, we derive the classifications specified in the Introduction and, in particular, prove the generalizations to disconnected groups of the Bia{\l}ynicki-Birula's theorems on linearization of actions on $\An$ of tori of dimension
 $\geqslant n-1$.

\begin{theorem}\label{torus0} Let $G$ be an algebraic subgroups of $\Autn$
 such that $G^0$ is a torus.

\begin{enumerate}[\hskip 4.2mm \rm (i)]
\item If $\dim G= n$ or $n-1$,
then there is an element
$g\in\Autn$ such that $gGg^{-1}\subset \GL_n$ and $gG^0g^{-1}\subset D_n$.
    \item If $G\subset \Autsn$ and $\dim G= n-1$,
then there is an element
 $g\in\Autsn$ such that $gGg^{-1}\subset \SL_n$ and $gG^0g^{-1}=D_n^*$ {\rm(}see\,\eqref{Dnnn}{\rm)}.
\end{enumerate}
\end{theorem}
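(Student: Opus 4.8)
The plan is to reduce, using the classification of maximal‑dimensional tori in $\Autn$ and $\Autsn$ (Theorems \ref{max2} and \ref{max3}), to the case where $G^0$ is a standard diagonal group, and then to exploit that $G$ is then contained in the \emph{algebraic} normalizer $N_{\Autn}(G^0)$ (resp.\ $N_{\Autsn}(G^0)$) supplied by Theorem \ref{normal}. Note first that $G$ is reductive: in characteristic $0$ the group $R_u(G)$ is connected, unipotent and normal, hence contained in the torus $G^0$, hence trivial. Part (ii) is then quick: since $G^0$ is an $(n-1)$-torus in $\Autsn$, Theorem \ref{max3}(ii) gives $h\in\Autsn$ with $hG^0h^{-1}=D_n^*$, so, replacing $G$ by $hGh^{-1}$, we may assume $G^0=D_n^*$ and $G\subseteq\Autsn$. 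Now $D_n^*=D_n(1,\dots,1)$ and the vector $(1,\dots,1)$ satisfies condition (a) of Theorem \ref{normal}(ii), so $N_{\Autn}(D_n^*)\subseteq N_{\GL_n}(D_n)$; intersecting with $\Autsn$ and using that ${\rm Jac}$ restricts to $\det$ on $\GL_n$ gives $G\subseteq N_{\Autsn}(D_n^*)\subseteq N_{\GL_n}(D_n)\cap\SL_n\subseteq\SL_n$, with $G^0=D_n^*$ unchanged, which is (ii).

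For (i), Theorems \ref{max2}(i),(ii) similarly allow me to conjugate $G$ by some $h\in\Autn$ so that $G^0=D_n$ (if $\dim G=n$) or $G^0=D_n(l_1,\dots,l_n)$ for a nonzero primitive $(l_1,\dots,l_n)\in\mathbf Z^n$ (if $\dim G=n-1$); in every case $G\subseteq N:=N_{\Autn}(G^0)$, which is algebraic by Theorem \ref{normal}. If $\dim G=n$ then $N=N_{\GL_n}(D_n)$ by Theorem \ref{normal}(i), so $G\subseteq\GL_n$ and $G^0=D_n$, and we are done. Suppose $\dim G=n-1$. If the nonzero entries of $l$ all have the same sign and $l\neq\pm e_i$ for every $i$, then $l$ has at least two nonzero entries and hence satisfies one of conditions (a), (b), (c) of Theorem \ref{normal}(ii); therefore $N\subseteq N_{\GL_n}(D_n)\subseteq\GL_n$, and again $G\subseteq\GL_n$ with $G^0=D_n(l)\subseteq D_n$. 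If instead $l=\pm e_i$ (so, up to a sign, $l_i=1$ and $l_j=0$ for $j\neq i$), Theorem \ref{normal}(iii) identifies $N$ with the group of transformations \eqref{cases}; its unipotent radical $R_u(N)$ is the one-parameter translation group $x_i\mapsto x_i+s$, and the subgroup $L$ of elements \eqref{cases} with $s=0$ is a Levi subgroup of $N$ lying in $N_{\GL_n}(D_n)\subseteq\GL_n$. As $G$ is reductive, by conjugacy of maximal reductive subgroups (Mostow; cf.\ \cite[5.1]{BS}) there is $u\in R_u(N)$ with $uGu^{-1}\subseteq L$; since $u\in R_u(N)\subseteq N=N_{\Autn}(G^0)$ we also get $uG^0u^{-1}=D_n(l)\subseteq D_n$, so $g:=uh$ works.

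The remaining case of (i) — $\dim G=n-1$ with the nonzero entries of $l$ not all of the same sign — is the crux. Here Lemma \ref{clozero} says that every $G^0$-orbit on $\An$ has $(0,\dots,0)$ in its closure; hence $\{(0,\dots,0)\}$ is the unique closed $G^0$-orbit and $k[\An]^{G^0}=k$, so $k[\An]^G=k$ as well, and since $G$ permutes the closed $G^0$-orbits it fixes $(0,\dots,0)$. Thus $G$ is a reductive group acting on the smooth affine variety $\An$ with only constant invariants and with a fixed point. By Luna's étale slice theorem — in the form used in the proof of Corollary \ref{sl}, after reducing to $k=\mathbf C$ by the Lefschetz principle as in Corollary \ref{sll} and descending — $\An$ is, as a $G$-variety, a homogeneous vector bundle over its unique closed orbit $\{(0,\dots,0)\}$, i.e.\ a $G$-module. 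Hence some $\phi\in\Autn$ satisfies $\phi G\phi^{-1}\subseteq\GL_n$; as $\phi G^0\phi^{-1}$ is then an $(n-1)$-torus in $\GL_n$, one more conjugation by an element of $\GL_n$ moves it into $D_n$ without leaving $\GL_n$, and $g$ is the composite of these conjugations with $h$.

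The main obstacle is precisely this last case: it is the one configuration of $l$ that Theorem \ref{normal} does not describe explicitly, so there is no concrete model of $N_{\Autn}(G^0)$ and no visible reduction to linear algebra; one has to fall back on the geometry of the action — the absence of nonconstant invariants forcing $G$ to fix the origin, together with Luna's slice theorem — to linearize $G$ directly. The Levi/Mostow step in the case $l=\pm e_i$ is a secondary technicality, and the rest is bookkeeping with results already established.
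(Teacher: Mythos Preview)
Your argument is correct and shares the same architecture as the paper's proof: normalize $G^{0}$ via Theorems \ref{max2}/\ref{max3}, then trap $G$ inside the normalizer $N_{\Autn}(G^{0})$ described in Theorem \ref{normal}, falling back on Luna's slice theorem when $k[\An]^{G}=k$. The tactical differences are worth noting.

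First, the paper organizes the case split by whether $k[\An]^{G}=k$: it disposes of that case at the outset by a direct appeal to a corollary of Luna \cite[Cor.\,2, p.\,98]{Luna}, and only afterwards normalizes $G^{0}$ and analyzes the sign pattern of $(l_{1},\dots,l_{n})$. You normalize $G^{0}$ first and meet the Luna case last, as the ``mixed signs'' configuration; the content is the same.

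Second, and more interestingly, to control the disconnected part of $G$ the paper invokes \cite[Lemme 5.11]{BS} to produce a \emph{finite} subgroup $F\subset G$ meeting every component, writes $G=FG^{0}$, and then argues case by case that $F\subset N_{\GL_n}(D_n)$; in the exceptional case $l=\pm e_{i}$ this reduces to the observation that a finite-order element of the group \eqref{cases} must have $s=0$. You bypass $F$ entirely: in cases (a), (b), (c) the inclusion $G\subseteq N\subseteq N_{\GL_n}(D_n)$ is immediate, and for $l=\pm e_{i}$ you instead use Mostow/Levi conjugacy inside the algebraic group $N$ to move $G$ into the Levi factor $L\subset\GL_n$. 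Both devices work; yours uses a bit more structure theory, the paper's is more elementary but requires the auxiliary finite group.

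Finally, your treatment of part (ii) is sharper than the paper's: rather than rerunning the whole case analysis with $\SL_n$ in place of $\GL_n$, you observe directly that $D_n^{*}=D_n(1,\dots,1)$ already satisfies condition (a) of Theorem \ref{normal}(ii), so $N_{\Autn}(D_n^{*})\subseteq N_{\GL_n}(D_n)$ and intersecting with $\Autsn$ finishes at once.
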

\begin{proof}  (i) By virtue of conjugacy of maximal tori in
 $\GL_n$, it suffices to prove the existence of  $g\in\Autn$
such that $gGg^{-1}\subset \GL_n$.

 The group $G$ is reductive.\;Therefore, by Corollary of Theorem on \'etale slice
 \cite[Cor.\,2, p.\,98]{Luna} the claim holds if  $k[\An]^G=k$ (here only reductivity of $G$ is used, not the stronger condition that $G^0$ is a torus).
Therefore, in what follows we may assume that $k[\An]^G\neq k$. The latter is equivalent to the condition
\begin{equation}\label{neqk}
k[\An]^{G^0}\neq k.
\end{equation}

In view of Theorem \ref{max2} and equality \eqref{all0},
replacing
$G$ by a conjugate group, we may assume that
\begin{equation}\label{G0}
G^0=D_n(l_1,\ldots,l_n)\subset \GL_n.
 \end{equation}
The claim on the existence of  $g$ will be proved if we show that \eqref{G0} and \eqref{neqk} imply the inclusion
\begin{equation}\label{Glin}
G\subset\GL_n.
\end{equation}

Let $F$ be a finite subgroup of  $G$ that intersects every connected component of this group\,---\,such a subgroup exists, see\;\cite[Lemme 5.11]{BS}. Then
\begin{equation}\label{F}
G=FG^0.
\end{equation}
Since
$G^0$ is a normal subgroup of $G$, we have $F\subset N_{\Autn}(D_n(l_1,\ldots,l_n))$.
In view of Theorem \ref{normal} this shows that
\begin{equation}
\label{Fsubs}
F\subset N_{\GL_n}(D_n)\subset \GL_n,
 \end{equation}
if either  $(l_1,\ldots,l_n)=(0,\ldots, 0)$, or
$(l_1,\ldots,l_n)\neq (0,\ldots, 0)$ and
any of conditions
(a), (b), (c) of statement  (ii) of Theorem \ref{normal} holds. By virtue  of \eqref{G0} and \eqref{F}, this proves \eqref{Glin} for the specified $(l_1,\ldots, l_n)$'s. From Lemma \ref{clozero} and  \eqref{neqk} it now follows that it remains to consider  the last possibility for
 $(l_1,\ldots,l_n)$, namely, that where  $l_i=1$ and $l_j=0$ if $j\neq i$ for certain $i$.

Turning to its consideration, take some element $g\in F$. Then, by Theorem \ref{normal}(iii), equalities \eqref{cases} hold. Since $F$ is finite, the order of $g$ is finite.
This implies that in \eqref{cases} we have $s=0$. This and
 \eqref{torusDn} mean that in the case under consideration \eqref{Fsubs} holds as well, and therefore,  \eqref{Glin}, too. This proves\;(i).

(ii) The proof is the same as that of (i) if  $\Autn$ is replaced by $\Autsn$, $\GL_n$ by  $\SL_n=\GL_n\bigcap \Autsn$, $D_n(l_1,\ldots, l_n)$ by
$D_n^*$, the reference to Theo\-rem \ref{max2} by the reference to Theorem \ref{max3}, and it is taken into account that
$D_n^*$ is the maximal torus of $\SL_n$.
\quad $\square$ \renewcommand{\qed}{}\end{proof}

\begin{theorem} [{\rm Classification of $n$-dimensional diagonalizable subgroups
of $\Autn$ up to conjugacy in $\Autn$}]\label{nn} Up to conjugacy in
 $\Autn$, the torus $D_n$ is the unique  $n$-dimensional diagonalizable subgroup of
 $\Autn$.
\end{theorem}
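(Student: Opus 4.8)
The plan is to reduce at once to the case where $G^0$ is the standard torus $D_n$, using Theorem~\ref{torus0}(i), and then to close the argument with the elementary fact that a maximal torus of $\GL_n$ is self-centralizing.

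First I would observe that, $G$ being diagonalizable of dimension $n$, its identity component $G^0$ is an $n$-dimensional torus; hence $G$ meets the hypotheses of Theorem~\ref{torus0}(i). That theorem furnishes an element $g\in\Autn$ with $gGg^{-1}\subseteq\GL_n$ and $gG^0g^{-1}\subseteq D_n$. Since $gG^0g^{-1}$ is a connected $n$-dimensional subgroup of the connected $n$-dimensional group $D_n$, the inclusion is an equality: $gG^0g^{-1}=D_n$. Replacing $G$ by $gGg^{-1}$, I may thus assume $G\subseteq\GL_n$ and $G^0=D_n$.

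It then remains to show $G=D_n$. Because $G$ is diagonalizable it is abelian, so it centralizes its subgroup $D_n$, i.e.\ $G\subseteq Z_{\GL_n}(D_n)$. As $D_n$ is a maximal torus of $\GL_n$ one has $Z_{\GL_n}(D_n)=D_n$, whence $G\subseteq D_n$; together with $G\supseteq G^0=D_n$ this yields $G=D_n$. Undoing the conjugation shows that the given $G$ is conjugate in $\Autn$ to $D_n$, and since $D_n$ is itself an $n$-dimensional diagonalizable subgroup of $\Autn$, the theorem follows.

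I expect no real obstacle here: the whole weight of the statement has already been carried by Theorem~\ref{torus0}(i) (and through it by Theorem~\ref{max2}, i.e.\ Bia{\l}ynicki-Birula's linearization theorem, and by the algebraicity of $N_{\Autn}(D_n(l_1,\ldots,l_n))$ in Theorem~\ref{normal}). The only points needing a word of justification are the dimension count identifying $gG^0g^{-1}$ with $D_n$ and the standard identity $Z_{\GL_n}(D_n)=D_n$ for the maximal torus of $\GL_n$.
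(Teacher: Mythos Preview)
Your proof is correct and follows essentially the same route as the paper: both invoke Theorem~\ref{torus0}(i) to reduce to a subgroup of $\GL_n$, after which only an elementary fact about $\GL_n$ is needed. The paper finishes by citing that every diagonalizable subgroup of $\GL_n$ is conjugate to a subgroup of $D_n$, while you instead exploit the second conclusion $gG^0g^{-1}=D_n$ of Theorem~\ref{torus0}(i) together with $Z_{\GL_n}(D_n)=D_n$; this is a minor variation, not a different approach.
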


\begin{proof} By virtue of Theorem \ref{torus0} this follows from the fact that in $\GL_n$
every diagonalizable subgroup is conjugate to a subgroup of the torus $D_n$
(see\,\cite[p.\,112, Prop.(d)]{Borel}).
\quad $\square$ \renewcommand{\qed}{}\end{proof}

\begin{theorem} [{{\rm Classification of $(n-1)$-dimensional diagonalizable sub\-groups of  $\Autsn$ up to conjugacy in $\Autsn$}}]\label{n-1*}
Up to conjugacy in $\Autsn$, the torus $D_n^*$
 is the unique $(n-1)$-dimensional diagonalizable subgroup of
 $\Autsn$.
\end{theorem}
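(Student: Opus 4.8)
The plan is to reduce this statement to the corresponding result for tori, namely Theorem \ref{max3}(ii), combined with the classification machinery already developed for $D_n(l_1,\ldots,l_n)$ and the algebraicity of normalizers in Theorem \ref{normal}. The key point is that an $(n-1)$-dimensional diagonalizable subgroup $G\subset\Autsn$ has connected component $G^0$ a torus of dimension either $n-1$ or, a priori, smaller; but since $G$ is diagonalizable and $\dim G=n-1$, we have $\dim G^0=n-1$ whenever $G^0$ is nontrivial of full dimension, and in any case $G^0$ is a torus of dimension $n-1$ or $G/G^0$ contributes nothing to dimension, so $\dim G^0=n-1$. Hence $G^0$ is an $(n-1)$-dimensional torus in $\Autsn$.

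First I would invoke Theorem \ref{max3}(ii) to conjugate $G$ by an element of $\Autsn$ so that $G^0=D_n^*=D_n(1,\ldots,1)$. After this normalization it suffices to show $G=D_n^*$, i.e. that $G$ has no extra connected components. For this I would apply Theorem \ref{torus0}(ii): since $G\subset\Autsn$ has $G^0$ a torus and $\dim G=n-1$, there is $g\in\Autsn$ with $gGg^{-1}\subset\SL_n$ and $gG^0g^{-1}=D_n^*$. Replacing $G$ by $gGg^{-1}$, we may assume $G\subset\SL_n$ and $G^0=D_n^*$. Now $G$ is a diagonalizable subgroup of $\SL_n$ whose connected component is the maximal torus $D_n^*$ of $\SL_n$; since $D_n^*$ is maximal among diagonalizable subgroups of $\SL_n$ (any diagonalizable subgroup of $\SL_n$ containing $D_n^*$ is contained in $Z_{\SL_n}(D_n^*)=D_n^*$ because $D_n^*$ is its own centralizer in $\SL_n$), we conclude $G=D_n^*$. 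This proves uniqueness up to conjugacy in $\Autsn$; existence is trivial since $D_n^*$ itself is such a subgroup.

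The main obstacle, as I see it, is verifying that Theorem \ref{torus0}(ii) genuinely applies: one must check that an $(n-1)$-dimensional diagonalizable $G$ with $G^0=D_n^*$ really satisfies the hypotheses there (algebraic subgroup of $\Autsn$, $G^0$ a torus), which it does by construction, and then that the conjugating element can be chosen in $\Autsn$ rather than merely in $\Autn$ — but this is precisely the content of that theorem. Once inside $\SL_n$, the argument that a diagonalizable group with connected component equal to a maximal torus must equal that torus is standard: $G$ normalizes $D_n^*$, so $G\subset N_{\SL_n}(D_n^*)$, and the image of $G$ in the Weyl group $N_{\SL_n}(D_n^*)/D_n^*\cong S_n$ is trivial because $G$ is connected-plus-torsion abelian and any nontrivial element of $S_n$ acting on $D_n^*$ has no fixed-point-free... more simply, $G/D_n^*$ is a finite subgroup of $S_n$ acting on $\mathrm X(D_n^*)$, and since $G$ is abelian with $G^0=D_n^*$ central in $G$, every element of $G$ centralizes $D_n^*$, forcing $G\subset Z_{\SL_n}(D_n^*)=D_n^*$. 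Thus no real computation is needed beyond assembling Theorems \ref{max3}, \ref{torus0}, and \ref{normal}.
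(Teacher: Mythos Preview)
Your proposal is correct and follows essentially the same route as the paper: both proofs invoke Theorem~\ref{torus0}(ii) to conjugate $G$ inside $\SL_n$ with $G^0=D_n^*$, and then finish inside $\SL_n$. The paper concludes in one line by citing that every diagonalizable subgroup of $\SL_n$ is conjugate to a subgroup of $D_n^*$ (hence equals $D_n^*$ by dimension), whereas you use the equivalent observation $G\subset Z_{\SL_n}(D_n^*)=D_n^*$ coming from commutativity of $G$; your preliminary appeal to Theorem~\ref{max3}(ii) is redundant, since Theorem~\ref{torus0}(ii) already delivers $gG^0g^{-1}=D_n^*$.
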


\begin{proof} By virtue of Theorem \ref{torus0}, this follows form the fact that in
$\SL_n$ every diagonalizable subgroup is conjugate to a subgroup of the torus  $D_n^*$
(this easily follows from\,\cite[p.\,112, Prop.(d)]{Borel}).
\quad $\square$ \renewcommand{\qed}{}\end{proof}

\begin{theorem}[{{\rm Classification up to conjugacy in  $\Autn$ of maximal
 $n$-dimen\-sional algebraic subgroups $G$ of $\Autn$ such that $G^0$ is a torus}}]\label{nnnnnn}
Up to conjugacy in  $\Autn$, the group $N_{\GL_n}(D_n)$ {\rm(}see {\rm \eqref{torusDn})} is the unique maximal algebraic subgroup of  $\Autn$ whose connected component of identity is an  $n$-dimensional torus.
\end{theorem}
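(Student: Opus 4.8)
The plan is to deduce this quickly from Theorem~\ref{torus0}(i) together with the algebraicity statement Theorem~\ref{normal}(i). First I would fix an $n$-dimensional algebraic subgroup $G$ of $\Autn$ with $G^0$ a torus; since $\dim G^0=\dim G=n$, the torus $G^0$ is $n$-dimensional. By Theorem~\ref{torus0}(i) there is an element $g\in\Autn$ with $gGg^{-1}\subset\GL_n$ and $gG^0g^{-1}\subset D_n$, and because $gG^0g^{-1}$ is an $n$-dimensional torus contained in the $n$-dimensional torus $D_n$, in fact $gG^0g^{-1}=D_n$. Replacing $G$ by $gGg^{-1}$ (which is again a maximal member of the class, conjugation being a bijection of $\Autn$ preserving this class and its inclusions), I may assume $G\subset\GL_n$ and $G^0=D_n$; then $G$ normalizes $G^0=D_n$, so $G\subseteq N_{\GL_n}(D_n)$.

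Next I would observe that $N_{\GL_n}(D_n)$ itself belongs to the class under consideration: it is a closed subgroup of the algebraic subgroup $\GL_n$ of $\Autn$, hence an algebraic subgroup of $\Autn$; it has dimension $\dim D_n=n$; and by~\eqref{torusDn} it is the (semidirect) extension of $D_n$ by the permutation matrices, so its identity component is $D_n$, an $n$-dimensional torus. Consequently, if $G$ is maximal in the class, the inclusion $G\subseteq N_{\GL_n}(D_n)$ just obtained forces $G=N_{\GL_n}(D_n)$; undoing the initial conjugation shows that the original maximal $G$ is conjugate in $\Autn$ to $N_{\GL_n}(D_n)$.

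Finally I would verify that $N_{\GL_n}(D_n)$ genuinely is a maximal member of the class, so that the statement is literally correct. Suppose $N_{\GL_n}(D_n)\subseteq M$, where $M$ is an $n$-dimensional algebraic subgroup of $\Autn$ with $M^0$ a torus. Then $D_n=N_{\GL_n}(D_n)^0\subseteq M^0$, and since $\dim M^0=n=\dim D_n$ and both are tori, $M^0=D_n$. As $M$ normalizes its identity component $D_n$, Theorem~\ref{normal}(i) gives $M\subseteq N_{\Autn}(D_n)=N_{\GL_n}(D_n)$, whence $M=N_{\GL_n}(D_n)$. Together with the previous paragraphs this establishes the theorem.

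The whole argument is a short deduction, with all the substance already packed into Theorem~\ref{torus0}(i) (which itself rests on the Bia{\l}ynicki-Birula linearization and on the algebraicity result Theorem~\ref{normal}) and into Theorem~\ref{normal}(i); so I do not expect a real obstacle here. The only points that need some care are the routine checks that $N_{\GL_n}(D_n)$ is an algebraic subgroup of $\Autn$ with identity component exactly $D_n$, and that passing to a conjugate preserves maximality within the class, so that the two conjugations can be tracked consistently.
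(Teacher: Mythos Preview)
Your proof is correct and follows essentially the same route as the paper: reduce via Theorem~\ref{torus0}(i) to $G\subset\GL_n$ with $G^0=D_n$, hence $G\subseteq N_{\GL_n}(D_n)$, and use that $N_{\GL_n}(D_n)^0=D_n$ so that $N_{\GL_n}(D_n)$ itself lies in the class. The paper's own proof is a single sentence citing only Theorem~\ref{torus0} and the fact $N_{\GL_n}(D_n)^0=D_n$; your version spells out the details and, in particular, checks maximality of $N_{\GL_n}(D_n)$ by invoking Theorem~\ref{normal}(i) ($N_{\Autn}(D_n)=N_{\GL_n}(D_n)$), which is a clean way to close that step that the paper leaves implicit.
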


\begin{proof}
By virtue of Theorem \ref{torus0}, this follows from the fact that  $D_n=N_{\GL_n}(D_n)^0$.
\quad $\square$ \renewcommand{\qed}{}\end{proof}

\begin{theorem}[{{\rm Classification up to conjugacy in $\Autsn$ of maximal
 $(n-1)$-\break di\-mensional algebraic subgroups $G$ in $\Autsn$ such that
 $G^0$ is a torus}}]\label{12}
Up to conjugacy in $\Autsn$,
\begin{equation*}
N_{\SL_n}(D_n^*)=N_{\GL_n}(D_n)\cap \SL_n
 \end{equation*}
is the unique maximal algebraic subgroup of
$\Autsn$ whose connected compo\-nent of identity is an $(n-1)$-dimensional torus.
\end{theorem}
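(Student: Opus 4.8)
The plan is to transcribe the proof of Theorem~\ref{nnnnnn} into the ``special'' setting, invoking part~(ii) of Theorem~\ref{torus0} in place of part~(i). Let $G$ be an $(n-1)$-dimensional algebraic subgroup of $\Autsn$ with $G^0$ a torus; then $G^0$ is an $(n-1)$-dimensional torus, so by Theorem~\ref{torus0}(ii) there is $g\in\Autsn$ with $gGg^{-1}\subset\SL_n$ and $gG^0g^{-1}=D_n^*$. Since $G^0$ is normal in $G$, the group $gGg^{-1}$ normalizes $D_n^*$; being contained in $\SL_n$ it therefore lies in $N_{\SL_n}(D_n^*)$.

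I would then verify that $N_{\SL_n}(D_n^*)$ is itself an admissible competitor, i.e.\ an algebraic subgroup of $\Autsn$ whose identity component is an $(n-1)$-dimensional torus. Algebraicity is immediate, as $N_{\SL_n}(D_n^*)$ is closed in the algebraic group $\SL_n\subset\Autsn$. Moreover $D_n^*=D_n(1,\dots,1)\cong\mathbf G_{\rm m}^{n-1}$ is a maximal torus of the connected reductive group $\SL_n$ (Corollary~\ref{cod1}(i) and \eqref{Dnnn}), so by a standard property of maximal tori in connected reductive groups the identity component of its normalizer in $\SL_n$ equals $D_n^*$, of dimension $n-1$ (see \cite{Borel}). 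Hence $gGg^{-1}$ is a subgroup of the required type contained in the subgroup $N_{\SL_n}(D_n^*)$, also of the required type; maximality of $G$ forces $gGg^{-1}=N_{\SL_n}(D_n^*)$. Thus every such $G$ is conjugate in $\Autsn$ to $N_{\SL_n}(D_n^*)$, which is itself maximal of that type, giving the asserted uniqueness up to conjugacy.

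Finally I would prove the displayed equality $N_{\SL_n}(D_n^*)=N_{\GL_n}(D_n)\cap\SL_n$. A generic diagonal element of determinant $1$ is a regular semisimple element of $\GL_n$, so $Z_{\GL_n}(D_n^*)=D_n$; consequently any element of $\GL_n$ normalizing $D_n^*$ normalizes its centralizer $D_n$, which gives the inclusion $N_{\SL_n}(D_n^*)\subseteq N_{\GL_n}(D_n)\cap\SL_n$. Conversely, if $h\in\SL_n$ normalizes $D_n$, then conjugation by $h$ preserves both $D_n$ and $\SL_n$ (the latter because $\det$ is invariant under conjugation), hence preserves $D_n^*=D_n\cap\SL_n$; so $h\in N_{\SL_n}(D_n^*)$, giving the reverse inclusion.

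The only point requiring real attention is the use of Theorem~\ref{torus0}(ii): the whole argument hinges on the conjugating automorphism being available \emph{inside} $\Autsn$, so that all the conjugacies produced are conjugacies in $\Autsn$ and not merely in $\Autn$. Everything else is a routine transcription of the $\GL_n$-case together with standard structure theory of maximal tori in connected reductive groups.
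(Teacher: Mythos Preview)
Your proof is correct and follows essentially the same approach as the paper's: invoke Theorem~\ref{torus0}(ii) to conjugate any such $G$ into $\SL_n$ with $G^0$ landing on $D_n^*$, then use $(N_{\SL_n}(D_n^*))^0=D_n^*$ to conclude. The paper's proof is the one-line ``By virtue of Theorem~\ref{torus0}, this follows from the fact that $D_n^*=N_{\SL_n}(D_n^*)^0$''; you have simply unpacked this and additionally supplied a verification of the displayed equality $N_{\SL_n}(D_n^*)=N_{\GL_n}(D_n)\cap\SL_n$, which the paper states without separate proof.
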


\begin{proof}
By virtue of Theorem \ref{torus0}, this follows from the fact that  $D_n^*=N_{\SL_n}(D_n^*)^0$.
\quad $\square$ \renewcommand{\qed}{}\end{proof}

Denote by ${\mathcal L}_n$ the additive monoid of all $(l_1,\ldots, l_n)\in \mathbf Z^n$  such that
\begin{enumerate}[\hskip 7.2mm \rm (a)
]
\item $l_1\leqslant\cdots\leqslant l_n$;
\item $(l_1,\ldots, l_n)\leqslant (-l_n,\ldots, -l_1)$ with respect to the lexicographical order on $\mathbf Z^n$.
\end{enumerate}

\begin{theorem}[{\rm Classification of  $(n-1)$-dimensional diagonalizable sub\-groups in $\Autn$ up to conjugacy in $\Autn$}]\label{n-1} \

\begin{enumerate}[\hskip 4.2mm \rm (i)]
\item Every $(n\!-\!1)$-dimensional diagonalizable sub\-group of the group ${\rm Aut} {\bf A}\!^n$
is conjugate in $\Autn$ to a unique subgroup of the form
 \begin{equation}\label{DDD}
D_n(l_1,\ldots, l_n),\;\;\mbox{где}\;\; (l_1,\ldots,l_n)\in {\mathcal L}_n\setminus \{(0,\ldots, 0)\}.
\end{equation}
\item
Every subgroup {\rm \eqref{DDD}} is an $(n-1)$-dimensional diagonalizable group.
\end{enumerate}
\end{theorem}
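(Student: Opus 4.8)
The plan is to reduce the statement to facts about the torus $D_n$ that are already proved, using Theorem \ref{torus0} to land inside $\GL_n$ and Lemma \ref{conju} to transfer conjugacy questions from $\Autn$ to $\GL_n$; I will handle (ii) first, then existence in (i), then uniqueness.

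Part (ii) is immediate: for $(l_1,\ldots,l_n)\neq(0,\ldots,0)$, Corollary \ref{cod1}(ii) says that $D_n(l_1,\ldots,l_n)$ is a closed $(n-1)$-dimensional subgroup of $D_n$, hence is $(n-1)$-dimensional and, being a subgroup of a torus, diagonalizable. For the existence part of (i), let $G$ be an $(n-1)$-dimensional diagonalizable subgroup of $\Autn$. Since a diagonalizable group is a direct product of a torus and a finite Abelian group, $G^0$ is a torus, so Theorem \ref{torus0}(i) gives $g\in\Autn$ with $gGg^{-1}\subset\GL_n$. A diagonalizable subgroup of $\GL_n$ is conjugate in $\GL_n$ to a subgroup of $D_n$ \cite[p.\,112, Prop.(d)]{Borel}, so $G$ is conjugate in $\Autn$ to a closed $(n-1)$-dimensional subgroup of $D_n$, which by Corollary \ref{cod1}(ii) is some $D_n(l_1,\ldots,l_n)$ with $(l_1,\ldots,l_n)\neq(0,\ldots,0)$. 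Conjugating further by monomial matrices of $N_{\GL_n}(D_n)\subset\Autn$ and using $D_n(l)=D_n(-l)$ (Corollary \ref{cod1}(iii)), I may replace $(l_1,\ldots,l_n)$ by any member of its $\{\pm1\}\times S_n$-orbit; sorting the entries increasingly and then keeping, between that and its negative reverse $(-l_n,\ldots,-l_1)$, the lexicographically smaller one, I bring $(l_1,\ldots,l_n)$ into ${\mathcal L}_n\setminus\{(0,\ldots,0)\}$.

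For uniqueness, suppose $gD_n(l)g^{-1}=D_n(l')$ with $g\in\Autn$ and $(l),(l')\in{\mathcal L}_n\setminus\{(0,\ldots,0)\}$. Because $D_n(l)$ and $D_n(l')$ lie in $\GL_n$, for every $s\in D_n(l)$ both $s$ and $gsg^{-1}$ lie in $\GL_n$, so Lemma \ref{conju} produces $g'\in\SL_n$ with $gsg^{-1}=g's{g'}^{-1}$ for all such $s$; hence $g'D_n(l){g'}^{-1}=D_n(l')$ with $g'\in\GL_n$, and Corollary \ref{cod1}(iv) gives $\sigma\in S_n$ with $(l_1,\ldots,l_n)=\pm(l'_{\sigma(1)},\ldots,l'_{\sigma(n)})$. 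Thus $(l)$ and $(l')$ lie in a single orbit of $\{\pm1\}\times S_n$ on $\mathbf Z^n$, and it suffices to observe that ${\mathcal L}_n$ meets each such orbit, except $\{(0,\ldots,0)\}$, in exactly one point: condition (a) picks out the increasing rearrangement of the orbit, and condition (b) then decides between that and its negative reverse (which, crucially, is again increasing), yielding $(l)=(l')$.

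I do not expect a genuine obstacle, since the two substantive inputs — Theorem \ref{torus0} (ultimately the Bia{\l}ynicki-Birula linearization theorem) for placing $G$ inside $\GL_n$, and Lemma \ref{conju} for replacing $g$ by a linear conjugating element — are already available; the only point needing care is the combinatorial claim that ${\mathcal L}_n$ is a set of orbit representatives for $\{\pm1\}\times S_n$ acting on $\mathbf Z^n$, and even there the sole subtlety is the remark that the negative reverse of an increasing integer sequence is increasing.
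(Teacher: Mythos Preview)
Your proof is correct and follows the paper's line closely for part (ii) and for the existence half of (i). The one genuine variation is in the uniqueness argument: the paper invokes the fusion theorem for $n$-dimensional tori in $\Autn$ (Theorem~\ref{FFF}(i)) to conclude that $D_n(l)$ and $D_n(l')$, being conjugate in $\Autn$, are already conjugate in $N_{\GL_n}(D_n)$, and then applies Corollary~\ref{cod1}(iv). You instead apply Lemma~\ref{conju} directly to produce a single $g'\in\SL_n$ conjugating $D_n(l)$ to $D_n(l')$, and then use Corollary~\ref{cod1}(iv) from the $\GL_n$-conjugacy side. Since Lemma~\ref{conju} is precisely the engine behind Theorem~\ref{FFF}(i), your route amounts to inlining the relevant part of that proof and skipping the extra step (passing to a centralizer and conjugating maximal tori there) that the full fusion statement requires; this buys a slightly shorter argument at the cost of not exhibiting the conjugating element inside $N_{\GL_n}(D_n)$. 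Both routes terminate at the same combinatorial check that $\mathcal L_n$ is a transversal for the $\{\pm1\}\times S_n$-action on $\mathbf Z^n$, which you handle correctly.
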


\begin{proof} Let $G$ be an $(n-1)$-dimensional diagonalizable
subgroup of $\Autn$. Taking into account Theorem \ref{torus0}(i),
conjugacy in  $\GL_n$ of every diagonalizable subgroup of this group to a subgroup of the torus $D_n$, and Corollary
 \ref{cod1}(ii), we conclude that  $G$ is conjugate in $\Autn$
to a certain subgroup $D_n(l_1,\ldots, l_n)$ with
$(l_1,\ldots, l_n)\neq (0,\ldots, 0)$.\;It follows from the definition of ${\mathcal L}_n$ and Corollary \ref{cod1}(iv) that we may presume that  $(l_1,\ldots, l_n)\in {\mathcal L}_n$. Assume that $G$ is also conjugate in $\Autn$ to a subgroup $D_n(l'_1,\ldots, l'_n)$ with $(l'_1,\ldots, l'_n)\in {\mathcal L}_n$. Then $D_n(l_1,\ldots, l_n)$ and $D_n(l'_1,\ldots, l'_n)$ are conjugate in $\Autn$, and therefore, Theorem \ref{FFF}(i) implies that they are conjugate in
$N_{\GL_n}(D_n)$. In view of Corollary \ref{cod1}(iv) and Definition ${\mathcal L}_n$, this implies that $(l_1,\ldots, l_n)=(l'_1,\ldots, l'_n)$. This proves (i).

Statement (ii) follows from Corollary \ref{cod1}(ii).
\quad $\square$ \renewcommand{\qed}{}\end{proof}

From Theorems \ref{normal}, \ref{nn}, and \ref{n-1} we deduce

\begin{theorem} \label{algnorm}
If $G$ is a diagonalizable subgroup of  $\Autn$ of dimension $\geqslant n-1$, then
$N_{\Autn}(G)$ is an algebraic subgroup of $\Autn$.
\end{theorem}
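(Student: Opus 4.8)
The plan is to deduce this from the classification results already in hand, reducing everything to Theorem \ref{normal}. First note that since $G$ is diagonalizable, its identity component $G^0$ is a torus with $\dim G^0=\dim G$, and $G^0\subset\Autn\subset\Cr_n$; by Theorem \ref{max1}(i) there is no torus of dimension $>n$ in $\Cr_n$, so the hypothesis $\dim G\geqslant n-1$ leaves exactly the two cases $\dim G=n$ and $\dim G=n-1$. If $\dim G=n$, Theorem \ref{nn} furnishes $h\in\Autn$ with $hGh^{-1}=D_n$. If $\dim G=n-1$, Theorem \ref{n-1}(i) furnishes $h\in\Autn$ and a nonzero $(l_1,\ldots,l_n)\in\mathbf Z^n$ with $hGh^{-1}=D_n(l_1,\ldots,l_n)$. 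In both cases the first assertion of Theorem \ref{normal} tells us that $N_{\Autn}(hGh^{-1})$ is an algebraic subgroup of $\Autn$.

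It then remains to transport this back along $h$. We have $N_{\Autn}(hGh^{-1})=h\,N_{\Autn}(G)\,h^{-1}$, so it suffices to observe that conjugation by a fixed element $h\in\Autn$ carries algebraic subgroups of $\Autn$ onto algebraic subgroups of $\Autn$. Indeed, if $\pi\colon S\to\Autn$ is an algebraic family whose image is $N_{\Autn}(G)$, then $s\mapsto h\,\pi(s)\,h^{-1}$ is again an algebraic family: it is the composition of $\pi$ with the inner automorphism of $\Autn$ determined by $h$, which respects the ``algebraic family'' structure (\cite{Ram}, \cite{Serre2}, \cite{Blanc10}, \cite{Popov3}). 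Its image is $h\,N_{\Autn}(G)\,h^{-1}=N_{\Autn}(hGh^{-1})$, so $N_{\Autn}(G)$ is conjugate in $\Autn$ to an algebraic subgroup and hence is itself an algebraic subgroup of $\Autn$.

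There is essentially no real obstacle in this argument; it is a formal consequence of Theorems \ref{normal}, \ref{nn}, and \ref{n-1}. The only point that I would take care to state explicitly — and the one a referee is most likely to ask about — is the compatibility just used, namely that conjugation by a fixed element of $\Autn$ preserves the class of algebraic subgroups; everything else is bookkeeping with the two cases $\dim G=n$ and $\dim G=n-1$.
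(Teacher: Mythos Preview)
Your proposal is correct and follows exactly the route the paper intends: the paper simply states that the theorem follows from Theorems \ref{normal}, \ref{nn}, and \ref{n-1}, and you have spelled out the two cases $\dim G=n$ and $\dim G=n-1$ together with the (straightforward) observation that conjugation by a fixed $h\in\Autn$ preserves algebraicity of subgroups. One small expositional point: in your verification of the conjugation step you start from an algebraic family with image $N_{\Autn}(G)$ rather than $N_{\Autn}(hGh^{-1})$, which is the wrong direction for what you need; the argument is of course symmetric in $h$ and $h^{-1}$, but it would read more cleanly if you began with the group you already know is algebraic.
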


\begin{remark} {\rm It is easy to see that $n-1$ in Theorem \ref{algnorm} cannot be replaced by a smaller integer.}
\end{remark}

\begin{theorem}[{\rm Classification of $(n-1)$-dimensional diagonalizable subgroups of $\Autn$ up to conjugacy in $\Cr_n$}]\label{Crn-11} \

 \begin{enumerate}[\hskip 2.2mm \rm(i)]
 \item Two diagonalizable $(n-1)$-dimensional subgroups of the group $\Autn$ are conjugate in $\Cr_n$ if and only if they are isomorphic.
 \item
  Every $(n-1)$-dimensional diagonalizable subgroup of the group
$\Autn$ is conjugate in $\Cr_n$ to a unique closed subgroup of  $D_n$ of the form
\begin{equation*}
{\rm ker}\,\varepsilon_n^d, \;\;d\in\mathbf Z.
\end{equation*}
\end{enumerate}
\end{theorem}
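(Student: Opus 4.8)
The plan is to reduce the whole statement to the groups $D_n(l_1,\ldots,l_n)$ by means of Theorem~\ref{n-1}, and then to follow the single integer $d:=\gcd(l_1,\ldots,l_n)$ through a Smith normal form and a permutation of coordinates. Concretely, for the existence part of (ii) I would take an $(n-1)$-dimensional diagonalizable subgroup $G\subseteq\Autn$ and apply Theorem~\ref{n-1}(i) to conjugate it, inside $\Autn$ and hence inside $\Cr_n$, to some $D_n(l_1,\ldots,l_n)$ with $(l_1,\ldots,l_n)\in\mathcal{L}_n\setminus\{(0,\ldots,0)\}$; this conjugation is an isomorphism of algebraic groups, so $G\cong D_n(l_1,\ldots,l_n)$. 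Setting $d:=\gcd(l_1,\ldots,l_n)\geqslant 1$, the $1\times n$ matrix $(l_1,\ldots,l_n)$ has $d$ as its only invariant factor, so its Smith normal form is $(d,0,\ldots,0)$; Corollary~\ref{AS} then yields that $D_n(l_1,\ldots,l_n)$ is conjugate in $\Cr_n$ to $D_n(d,0,\ldots,0)=\ker\varepsilon_1^d$, and exchanging the first and last coordinates (a conjugation already inside $N_{\GL_n}(D_n)$, cf.\ Corollary~\ref{cod1}(iv)) makes $\ker\varepsilon_1^d$ conjugate in $\Cr_n$ to $\ker\varepsilon_n^d$. Thus $G$ would be conjugate in $\Cr_n$ to $\ker\varepsilon_n^d$, a closed $(n-1)$-dimensional diagonalizable subgroup of $D_n$ of the required shape (Corollary~\ref{cod1}(ii)); since $\ker\varepsilon_n^0=D_n$ has the wrong dimension and $\ker\varepsilon_n^{-d}=\ker\varepsilon_n^d$, I would normalise $d\geqslant 1$.

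For the uniqueness in (ii) and for (i) the observation to exploit is that $D_n(l_1,\ldots,l_n)\cong\boldsymbol\mu_d\times\mathbf{G}_{\rm m}^{n-1}$ by Corollary~\ref{cod1}(i), so $G\cong\boldsymbol\mu_d\times\mathbf{G}_{\rm m}^{n-1}$ and the isomorphism type of $G$ already determines $d$. If $G$ were also conjugate in $\Cr_n$ to $\ker\varepsilon_n^{d'}$ with $d'\geqslant 1$, then $\ker\varepsilon_n^d$ and $\ker\varepsilon_n^{d'}$ would be conjugate in $\Cr_n$; conjugation in $\Cr_n$ is an isomorphism of abstract groups, which for diagonalizable algebraic groups over our field forces an algebraic isomorphism, whence $\boldsymbol\mu_d\times\mathbf{G}_{\rm m}^{n-1}\cong\boldsymbol\mu_{d'}\times\mathbf{G}_{\rm m}^{n-1}$ and $d=d'$. (If one prefers to avoid that last step, I would instead count torsion: from \eqref{chi} and \eqref{action} the set $\{x\in\ker\varepsilon_n^e\mid x^m=1\}$ has exactly $m^{\,n-1}\gcd(m,e)$ elements, a number preserved under abstract isomorphism, so the choice $m=dd'$ forces $d=d'$.) Part (i) then follows formally: given $(n-1)$-dimensional diagonalizable $G_1,G_2\subseteq\Autn$, each $G_i$ is conjugate in $\Cr_n$ to $\ker\varepsilon_n^{d_i}$ with $G_i\cong\boldsymbol\mu_{d_i}\times\mathbf{G}_{\rm m}^{n-1}$, and by the existence and uniqueness just obtained, $G_1$ and $G_2$ are conjugate in $\Cr_n$ if and only if $d_1=d_2$, which holds if and only if $G_1\cong G_2$.

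The hard part is not the geometry — that is completely absorbed into Theorem~\ref{n-1} — but the ``conjugate $\Rightarrow$ isomorphic'' direction of (i): a $\Cr_n$-conjugation is \emph{a priori} only an isomorphism of abstract groups, so I must either invoke that a diagonalizable algebraic group over an algebraically closed field of characteristic zero is determined up to algebraic isomorphism by its underlying abstract group, or else bypass this with the elementary count of $m$-torsion points indicated above, which exhibits $d$ directly. Everything else is routine bookkeeping with integer matrices and with the character lattice of $D_n$.
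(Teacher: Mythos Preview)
Your proof is correct and follows the same overall line as the paper's, which is a one-line citation of Theorems~\ref{n-1} and~\ref{dc} together with Corollary~\ref{cod1}(i). The difference is only in packaging: once Theorem~\ref{n-1} places $G$ (up to $\Autn$-conjugacy) inside $D_n\subset\Aff_n$, the paper simply invokes Theorem~\ref{dc}, whose part~(i) says diagonalizable subgroups of $\Aff_n$ are conjugate in $\Cr_n$ iff isomorphic, and whose canonical form~\eqref{canon} with $r=n-1$ specialises exactly to $\ker\varepsilon_n^d$. You instead unpack Theorem~\ref{dc} by going back to its main ingredient, Corollary~\ref{AS} (Smith normal form), and then permuting coordinates by hand.

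Your careful discussion of the ``conjugate $\Rightarrow$ isomorphic'' direction (via torsion counting or via the fact that a diagonalizable group in characteristic~$0$ is determined by its abstract group) is a point the paper treats as evident, both here and in Theorem~\ref{dc}(i); invoking Theorem~\ref{dc}(i) as a black box absorbs that issue entirely, which is what makes the paper's proof so short. Your torsion-count argument is a clean way to make it explicit if one wants to.
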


\begin{proof} This follows from Theorems \ref{n-1} and \ref{dc} and Corollary \ref{cod1}(i).
\quad $\square$ \renewcommand{\qed}{}\end{proof}

For $n\leqslant 3$, Theorems \ref{max1}(i), \ref{nn}, and \ref{n-1} yield the classification of all tori in $\Autn$  up to conjugacy in $\Autn$ except for one-dimensional tori in ${\rm Aut}\,\mathbf A^3$. The classification of the latter is given below in Theorem  \ref{1-3}.

\begin{theorem}[{{\rm Classification of one-dimensional tori in
${\rm Aut}\,\mathbf A^3$ up to conjugacy in ${\rm Aut}\,\mathbf A^3$}}] \label{1-3}
Every one-dimensional torus in ${\rm Aut}\,\mathbf A^3$ is conjugate in ${\rm Aut}\,\mathbf A^3$ to a unique torus of the form $T(l_1, l_2, l_3)$, where $(l_1, l_2, l_3)\in \mathcal L_3$.
\end{theorem}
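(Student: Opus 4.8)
The plan is to combine the classification of one-dimensional tori in ${\rm Aut}\,\mathbf A^3$ up to conjugacy in ${\rm Aut}\,\mathbf A^3$ (Theorem \ref{max2}(iii)), which says every such torus is conjugate to some $T(l_1,l_2,l_3)\subset D_3$, with a uniqueness argument that pins down the representative inside the monoid $\mathcal L_3$. First I would invoke Theorem \ref{max2}(iii): any one-dimensional torus $T$ in ${\rm Aut}\,\mathbf A^3$ is conjugate in ${\rm Aut}\,\mathbf A^3$ to a subtorus of $D_3$, hence to $T(l_1,l_2,l_3)$ for some nonzero primitive vector $(l_1,l_2,l_3)\in\mathbf Z^3$ (i.e.\ with $\gcd(l_1,l_2,l_3)=1$). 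Next, using that $N_{\GL_3}(D_3)$ acts on the characters $\varepsilon_1,\varepsilon_2,\varepsilon_3$ by permutations and that $T(l_1,l_2,l_3)=T(-l_1,-l_2,-l_3)$ by Lemma \ref{TTTT}, I can replace $(l_1,l_2,l_3)$ by a permuted and/or sign-reversed copy; the definition of $\mathcal L_3$ (conditions (a) $l_1\le l_2\le l_3$ and (b) $(l_1,l_2,l_3)\le(-l_3,-l_2,-l_1)$ lexicographically) is precisely designed so that each orbit of $\pm S_3$ on primitive vectors has exactly one representative in $\mathcal L_3$. So I may assume $(l_1,l_2,l_3)\in\mathcal L_3\setminus\{(0,0,0)\}$.

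The heart of the matter is uniqueness: if $T(l_1,l_2,l_3)$ and $T(l_1',l_2',l_3')$ with both vectors in $\mathcal L_3$ are conjugate in ${\rm Aut}\,\mathbf A^3$, I must show the vectors coincide. The plan is to apply the fusion theorem, Theorem \ref{FFF}(i), with $n=3$: a one-dimensional torus in $\Autn$ is of course not of dimension $n$, so Theorem \ref{FFF}(i) as literally stated concerns $n$-dimensional tori. The hard part will be bridging this gap — I expect the main obstacle to be that the fusion theorem I can quote is for top-dimensional tori, not for one-dimensional ones. I would resolve this by working instead inside a larger affine Cremona group or by a direct argument: two conjugate one-dimensional subtori $S=gT(l_1,l_2,l_3)g^{-1}=T(l_1',l_2',l_3')$ of ${\rm Aut}\,\mathbf A^3$, with $g\in{\rm Aut}\,\mathbf A^3$, lie inside $D_3\subset\GL_3$; I would like to conclude $g$ can be taken in $N_{\GL_3}(D_3)$. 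One route: apply Lemma \ref{conju} to $g$. Although a one-dimensional torus $s\in T(l_1,l_2,l_3)$ and its image $gsg^{-1}$ both lie in $\GL_3$, so \eqref{lin} gives $gsg^{-1}=g's{g'}^{-1}$ for $g'\in\SL_3$ (in fact $g'\in\GL_3$), whence the two tori are conjugate in $\GL_3$, and then the classical fusion theorem for tori in the reductive group $\GL_3$ (cited as \cite[1.1.1]{Serre1}, used in Lemma \ref{sbgps}(iv)(b)) conjugates them in $N_{\GL_3}(D_3)$.

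Once the conjugating element may be taken in $N_{\GL_3}(D_3)$, the equality of the vectors follows from the analogue of Corollary \ref{cod1}(iv) for one-dimensional tori, namely Lemma \ref{TTTT} together with the observation that $N_{\GL_3}(D_3)$ acts on one-dimensional subtori of $D_3$ through permutations of coordinates: $T(l_1,l_2,l_3)$ and $T(l_1',l_2',l_3')$ are conjugate in $N_{\GL_3}(D_3)$ iff $(l_1,l_2,l_3)=\pm(l_{\sigma(1)}',l_{\sigma(2)}',l_{\sigma(3)}')$ for some $\sigma\in S_3$, iff they determine the same element of $\mathcal L_3$. Hence $(l_1,l_2,l_3)=(l_1',l_2',l_3')$, proving uniqueness and completing the proof. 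The one genuinely delicate point, to be checked carefully, is the reduction from ${\rm Aut}\,\mathbf A^3$-conjugacy to $\GL_3$-conjugacy via Lemma \ref{conju}: I need that for a suitable primitive $s$ (e.g.\ a generator of $T(l_1,l_2,l_3)$ of infinite order), both $s$ and $gsg^{-1}$ lie in $\GL_3$, which is automatic since both tori are contained in $D_3\subset\GL_3$, so Lemma \ref{conju} applies directly.
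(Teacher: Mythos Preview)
Your argument is correct, and it follows essentially the same route as the paper's proof, but you take an unnecessary detour because of a misreading of Theorem~\ref{FFF}(i). In the fusion property \ref{PF}, the group $H$ is the $n$-dimensional torus (here $D_3$), while $S$ is an \emph{arbitrary subset} of $H$. So with $S=T(l_1,l_2,l_3)\subseteq D_3$ and $gSg^{-1}=T(l_1',l_2',l_3')\subseteq D_3$, Theorem~\ref{FFF}(i) applies directly and yields an element $w\in N_{{\rm Aut}\,\mathbf A^3}(D_3)$ conjugating one torus to the other; since $N_{{\rm Aut}\,\mathbf A^3}(D_3)=N_{\GL_3}(D_3)$ by Theorem~\ref{normal}(i), this is exactly the conclusion you want. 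The paper's proof does precisely this, then invokes Corollary~\ref{cod1}(iv), Lemma~\ref{TTTT}, and the definition of $\mathcal L_3$ to finish.

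Your workaround via Lemma~\ref{conju} followed by the classical fusion theorem in $\GL_3$ is not a different method: it is literally the proof of Theorem~\ref{FFF}(i) specialized to this situation. So once you recognize that Theorem~\ref{FFF}(i) already covers one-dimensional subtori as subsets $S$, the two arguments are identical.
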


\begin{proof} Let $G$ be a one-dimensional torus in ${\rm Aut}\,\mathbf A^3$. By Theorem \ref{max2} it is conjugate in ${\rm Aut}\,\mathbf A^3$ to some torus  $T(l_1, l_2, l_3)$. In view of \eqref{torusDn} and the equality $T(l_1, l_2, l_3)=T(-l_1, -l_2, -l_3)$, we may presume that $(l_1, l_2, l_3)\in\mathcal L_3$. Assume that $G$ is conjugate in ${\rm Aut}\,\mathbf A^3$ to another torus $T(l_1', l_2', l_3')$ with $(l_1', l_2', l_3')\in\mathcal L_3$.\;Then $T(l_1, l_2, l_3)$ and $T(l_1', l_2', l_3')$ are conjugate in ${\rm Aut}\,\mathbf A^3$, and therefore, by Theorem  \ref{FFF}(i), also  in
$N_{\GL_3}(D_3)$.\;From this, Corollary \ref{cod1}(iv), the definition of  ${\mathcal L}_n$, and Lemma  \ref{TTTT}, it is then not difficult to deduce that $(l_1, l_2, l_3)=(l'_1, l_2', l'_n)$.
\quad $\square$ \renewcommand{\qed}{}\end{proof}

\section{Jordan decomposition in ${\rm Cr}_n$.\;Torsion primes for the Cremona groups}

Although the Cremona groups are infinite dimensional (this has a precise meaning, see \cite{Ram}),
the analogies between them and algebraic groups strike the eye:\;they have the Zariski topology, algebraic subgroups, tori, roots, the Weyl groups, $\ldots$
 In \cite[1.2]{Serre2} Serre writes about the analogy
\begin{equation*}
\mbox{``groupe de Cremona de rang $n$ $\leftarrow\hskip -2mm\rightarrow$ groupe semi-simple de rang $n$''.}
\end{equation*}
Below we briefly touch upon two topics demonstrating that these analogies extend further.\;The second of them is intimately related to tori in the Cre\-mo\-na groups.

\vskip 2mm

\noindent{\bf Jordan decomposition in \boldmath $\Cr_n$.}
\vskip 1mm

Let  $X$ be an algebraic variety. ``Algebraic families'' endow
${\rm Bir}\,X$ with the Zariski topology \cite{Ram}, \cite[2]{Blanc10}, \cite[1.6]{Serre2}: a subset of ${\rm Bir}\,X$ is closed if and only if its inverse image for every algebraic family $S\to {\rm Bir}\,X$ is closed. For every algebraic subgroup $G$ in ${\rm Bir}\,X$ and its subset  $Z$, the closures of  $Z$ in this topology and in the Zariski topology of the group
 $G$ coincide. In particular, $G$ is closed in ${\rm Bir}\,X$.

 Let us call an element $g\in {\rm Bir}\,X$ {\it algebraic} if in  ${\Cr}_n$ there is an algebraic subgroup $G$ containing $g$. This is equivalent to the property that the closure of the cyclic group generated by $g$ is an algebraic group.
If $G$ is affine, then for  $g$ the Jordan decomposition in $G$ is defined, see \cite[Chap.\,I, \S4]{Borel}:
 \begin{equation}\label{J}
 g=g_sg_n
 \end{equation}
 In fact, $g_s$ and $g_n$ depend only of $g$, not of the choice of  $G$. Indeed, let  $G'$ be another affine algebraic subgroup of ${\rm Bir}\,X$ containing $g$, and let $g=g_s'g_n'$ be the Jordan decomposition in $G'$. Since $G\cap G'$ is a closed subgroup of  $G$ and of $G'$, there exists the Jordan decomposition $g=g_s''g_n''$ in $G\cap G'$. Applying theorem on behaviour of the Jordan decompositions under homomorphisms (see \cite[Thm. 4.4(4)]{Borel}) to the embeddings $G \hookleftarrow G\cap G'\hookrightarrow G'$, we obtain $g_s=g_s'', g_n=g_n''$, $g_s'=g_s'', g_n'=g_n''$. Hence, if we call
  \eqref{J} the {\it Jordan decomposition in} ${\rm Bir}\,X$, we get the well-defined notion.

 According to  \cite{Mat}, every algebraic subgroup of  $\Cr_n$ is affine. Therefore, in
$\Cr_n$  every algebraic element admits the Jordan decomposition.

Jordan decompositions in algebraic groups have several known properties
 \cite{Ste}, \cite{Borel}, for instance:
\begin{enumerate}[\hskip 4.2mm \rm(a)]
\item Every semisimple element of a connected group lies in its torus.
\item The set of all unipotent elements is closed.
\item The conjugacy class of every semisimple element of a connected re\-duc\-tive group is closed.
\item The closure of the conjugacy class of every element $g$ of a connected reductive group contains $g_s$.
\end{enumerate}

 Recall that $\Cr_n$ is connected and, for $n=1, 2$, simple \cite{Blanc10}, and that $\Autn$, $\Autsn$ are connected and $\Autsn$ is simple \cite{Shaf}, \cite{Sh}.\;It is natural to ask:

  \begin{question}
 {\it Are there analogues or modifications of the mentioned pro\-per\-ties for the groups $\Cr_n$, $\Autn$, and $\Autsn$}{\rm ?}
  \end{question}

For instance, property (a) for $\Cr_n$ holds if $n=1$, but does not hold if  $n>1$.
Indeed, if $n>2$, then by \cite[Thm.\,4.3]{Popov3} in $\Cr_n$ there is a semisimple element of order two that is not contained in any connected algebraic subgroup of the group
$\Cr_n$. If $n=2$, then by virtue of  Theorem \ref{max1}(ii) and Corollary \ref{isoconj}, all the elements of order  $d<\infty$ in $\Cr_n$ that can be included in tori
constitute a single conjugacy class, while, for instance, for even $d$, the set of all elements of order $d$ is the union of infinitely many conjugacy classes \cite{Blanc07}.\;On the other hand,
in the groups ${\rm Aut}\,\mathbf A\!^2$ and ${\rm Aut}^*\mathbf A\!^2$ property (a) holds, because action of every finite group on  $\mathbf A^2$ is linearizable
\cite{I}.
\vskip 2mm

\noindent{\bf Torsion primes for the Cremona groups.}
\vskip 1mm

 Let $G$ be a connected reductive algebraic group and let $p$ be a prime integer. Recall (see\;\cite[1.3]{Serre1} and references therein) that $p$ is called {\it torsion prime for the group} $G$
if in $G$ there is a finite Abelian $p$-subgroup not contained in any torus of the group $G$. The set ${\rm Tors}(G)$ of all torsion primes for the group $G$ admits various interpretations. For instance,  $p\in{\rm Tors}(G)$ for $k=\mathbf C$ if and only if $\bigoplus_i{\rm H}_i(G, \mathbf Z)$ contains an element of order $p$ (this explains the name). Finding ${\rm Tors}(G)$ is reduced to the case where $G$ is simply connected. For every simple $G$ the set ${\rm Tors}(G)$ is explicitly described.

The possibility to speak about tori in groups of the form
 ${\rm Bir}\,X$ where $X$ is an algebraic variety,
makes it possible to replace
 $G$ in the above definition by
${\rm Bir}\,X$ or its subgroup thereby
obtaining the well-defined notion. In particu\-lar, we consider what is obtained when $G$ is replaced by $\Cr_n$, $\Autn$, or $\Autsn$ as the definitions of torsion primes for these groups.
 Denote the sets of these primes respectively by  ${\rm Tors}(\Cr_n)$,
${\rm Tors}(\Autn)$, and ${\rm Tors}(\Autsn)$. It is natural to ask

 \begin{question}
 {\it What are, explicitly, the sets ${\rm Tors}(\Cr_n)$,
${\rm Tors}(\Autn)$, and ${\rm Tors}(\Autsn)$}{\rm ?}
 \end{question}
\noindent(About ${\rm Tors}(\Cr_n)$ this question has been formulated and discussed in talk \cite{Popov3}.)

Since $\Cr_1={\rm PGL}_2$, we have
\begin{equation}\label{Cr1}
{\rm Tors}(\Cr_1)=\{2\}.
\end{equation}

According to \cite{Blanc07}, for every $d=2, 3$, and $5$, in $\Cr_2$ there are
infinitely many conjugacy classes of elements of order  $d$. On the other hand,
as is explained at the end of section on Jordan decomposition, in $\Cr_2$ there is the unique conjugacy class of elements of order  $d$ contained in tori. Hence $2, 3$, and $5$ are torsion primes for the group $\Cr_2$.\;Consider a prime integer $p>5$.\;According to \cite[Thm.\,E]{F}, every element $g\in\Cr_2$ of order $p$ lies in a subgroup isomorphic to  ${\rm Aut}\,{\mathbf P}^2={\rm PGL}_3$.\;Hence $g$ lies in a torus. Finally, according to \cite[Thm.\,B, p.\,146]{BlancThesis}, in $\Cr_2$ there is a unique up to conjugacy noncyclic finite Abelian  $p$-group (it is denoted by 0.mn), and this group is contained in a maximal torus of a subgroup isomorphic
to $({\rm Aut}\,({\mathbf P}^1\times {\mathbf P}^1))^0={\rm PGL}_2\times{\rm PGL}_2$.\;Thus we conclude that the torsion primes for $\Cr_2$ are the same as that for the exceptional simple algebraic group ${\rm E}_8$:
\begin{equation}\label{Cr2}
{\rm Tors}(\Cr_2)=\{2, 3, 5\}.
\end{equation}

From ${\rm Aut}\,\mathbf A\!^1={\rm Aff}_1$ and \cite{I} it follows that every finite subgroup
of  $\Autn$ (respectively, $\Autsn$) for $n\leqslant 2$ is contained in a subgroup isomor\-phic to $\GL_n$ (respectively, $\SL_n$). Therefore,
\begin{equation*}
{\rm Tors}(\Autn)={\rm Tors}(\Autsn)=\{\varnothing\}\;\;\mbox{for $n\leqslant 2$}.
\end{equation*}

For $n\geqslant 3$, there is no comprehensive information about  the sets ${\rm Tors}(\Cr_n\!)$,
${\rm Tors}(\Autn)$, and ${\rm Tors}(\Autsn)$. From \cite[Thm.\,4.3]{Popov3},
\eqref{Cr1}, and \eqref{Cr2} it fol\-lows that
\begin{equation}\label{n2}
{\rm Tors}(\Cr_n)=\{2, \ldots \}\;\;\mbox{ for every $n$}.
\end{equation}
According to \cite{Beau}, in $\Cr_2$ there is a $3$-elementary Abelian
subgroups of rank $3$ (i.e., isomorphic to $(\boldsymbol\mu_d)^3$).\;Since $\Cr_1$ contains a cyclic subgroup of order  $3$, and  the direct product of $\Cr_2$ and $n-2$ copies of $\Cr_1$
can be embedded in $\Cr_n$ for $n\geqslant 3$, this implies that in
$\Cr_n$ there is a  $3$-elementary Abelian subgroup $G$ of rank $n+1$. But from Lemma \ref{sbgps}(i) it follows that for every prime integer $p$, the rank of any elementary Abelian $p$-subgroup of an $r$-dimensional torus is at most $r$. From this and Theorem \ref{max1}(i)
it follows that  $G$ is not contained in a torus of  $\Cr_n$.\;In view of \eqref{Cr2} and \eqref{n2}, this yields
\begin{equation*}
{\rm Tors}(\Cr_n)=\{2, 3, \ldots \} \;\;\mbox{ for any $n\geqslant 2$}.
\end{equation*}

 \begin{question}
 {\it What is the minimal $n$ such that $7$ lies in one of the sets ${\rm Tors}(\Cr_n)$,
${\rm Tors}(\Autn)$, and ${\rm Tors}(\Autsn)$}{\rm ?}
 \end{question}
 \begin{question}
 {\it Are these sets finite}{\rm ?}
 \end{question}


 \end{document}